\numberwithin{equation}{section}
\newtheorem{theorem}{\textbf{Theorem}}[section]
\newtheorem{proposition}[theorem]{\textbf{Proposition}}
\newtheorem{lemma}[theorem]{\textbf{Lemma}}
\newtheorem{corollary}[theorem]{Corollary}
\newtheorem{conjecture}[theorem]{\textbf{Conjecture}}
\theoremstyle{definition}
\newtheorem{definition}[theorem]{\textbf{Definition}}
\theoremstyle{remark}
\newtheorem{remark}[theorem]{\it{Remark}}
\newenvironment{notation}[1][Notation.]{\begin{trivlist}
\item[\hskip \labelsep {\bfseries #1}]}{\end{trivlist}}
\def\e{\epsilon}
\def\ei{\epsilon_i}
\def\R{\mathbb{R}}
\def\Rn{{\mathbb{R}}^n_+}
\def\d{\partial}
\def\fermi{\psi_i:B^+_{\delta}(0)\to M}
\def\fermilinha{\psi_i:B^+_{\delta'}(0)\to M}
\def\a{\alpha}
\def\b{\beta}
\def\Sei{S^+_{\delta\ei^{-1}}}
\def\Bei{B^+_{\delta\ei^{-1}}}
\def\Dei{D_{\delta\ei^{-1}}}
\def\Beilinha{B^+_{\delta'\ei^{-1}}}
\def\Deilinha{D_{\delta'\ei^{-1}}}
\def\ba{\begin{align}}
\def\ea{\end{align}}
\def\bp{\begin{proof}}
\def\ep{\end{proof}}
\def\cmedia{h}
\def\ds{d\sigma}
\renewcommand{\(}{\left(}
\renewcommand{\)}{\right)}
\begin{document}

\title{A priori estimates for negative constant scalar curvature conformal metrics with positive constant boundary mean curvature}
\date{}

\author{\textsc{S\'ergio Almaraz}\footnote{Partially supported by grant 201.049/2022, FAPERJ/Brazil.}  \textsc{and Shaodong Wang}\footnote{Partially supported by NSFC 12001364 and the Fundamental Research Funds for the Central Universities, No.30924010839.}}

\maketitle

\begin{abstract}
On a compact Riemannian manifold with boundary, we study the set of conformal metrics of negative constant scalar curvature in the interior and positive constant mean curvature on the boundary. Working in the case of positive Yamabe conformal invariant, we prove that this set is a priori bounded in the three-dimensional case and in the locally conformally flat with umbilical boundary case in any dimension not less than three.

\end{abstract}

\noindent\textbf{Keywords:} A priori estimates, Yamabe problem, manifolds with boundary, scalar curvature, mean curvature.

\noindent\textbf{Mathematics Subject Classification 2020:}  53C21, 35J66, 35R01.


\section{Introduction}\label{sec:intr}

Let $(M,g)$ be an $n$-dimensional Riemannian manifold with boundary $\d M$, $n\geq 3$. Denote by $R_g$ its scalar curvature and by $\Delta_g$ its Laplace-Beltrami operator which is the Hessian trace. By $h_g$ we denote the boundary mean curvature with respect to the inward normal vector $\eta$, i.e. $h_g=-\frac{1}{n-1}\rm{div}_g\eta$.
In this paper, we study the set of positive solutions to the equations
\begin{align}\label{main:equation:0}
\begin{cases}
L_{g}u+Ku^{\frac{n+2}{n-2}}=0,&\text{in}\:M,
\\
B_{g}u+cu^{\frac{n}{n-2}}=0,&\text{on}\:\partial M,
\end{cases}
\end{align}
where $K, c\in \mathbb R$. Here, $L_g=\Delta_g-(n-2)R_g/(4(n-1))$ is the conformal Laplacian and $B_g=\d/\d\eta-(n-2)h_g/2$ is the conformal boundary operator.
It is well known that a smooth solution $u>0$ of the problem \eqref{main:equation:0} represents a conformal metric $\tilde g=u^{\frac{4}{n-2}}g$ with scalar curvature $R_{\tilde g}=4(n-1)K/(n-2)$ and boundary mean curvature $h_{\tilde g}=2c/(n-2)$.

The problem of finding a solution to equations \eqref{main:equation:0} is known as the Escobar-Yamabe problem and was originally proposed in \cite{escobar2, escobar3, escobar4}. Canonical solutions of \eqref{main:equation:0} are obtained for $(M,g)=(\mathbb B^n, \delta_{\R^n})$, the unit ball in $\mathbb R^n$ with the Euclidean metric. In this case, Escobar proved in \cite{escobar1} that, up to a multiplicative constant, the only solutions of those equations are so that $\tilde g=u^{\frac{4}{n-2}}g$ is isometric to a spherical cap of the unit sphere $\mathbb S^n$, to a ball in the Euclidean space, or to a geodesic ball in the hyperbolic space $\mathbb H^n$, according  to $K>0$, $K=0$, or $K<0$, respectively. Moreover, in the latter case one necessarily has $c>\sqrt{-(n-2)K/n}$.

In our previous works \cite{almaraz-queiroz-wang} and \cite{almaraz-wang}, we studied compactness of the set of positive solutions to equations \eqref{main:equation:0} in the cases $K=0$ and $K>0$, respectively. In this paper, we are interested in the set of solutions to equations \eqref{main:equation:0} when $K$ is negative. Our main result, which is Theorem \ref{compactness:thm} below, proves that this set is a priori bounded in the $C^{2,\alpha}(M)$ topology under certain conditions.  

In the case of manifolds without boundary, the question of compactness in the Yamabe problem was raised by Schoen in a topics course at Stanford University in 1988. This question has attracted the interest of important mathematicians and a final answer was given in the works \cite{brendle2, brendle-marques, khuri-marques-schoen}. While in the case without boundary the negativity of the scalar curvature implies uniqueness of the solution to the Yamabe equation via the maximum principle, the same argument cannot be applied in the case of non-empty boundary when its mean curvature is positive.

We will adopt the normalization $c=n-2$ and assume $-n(n-2)<K< 0$. Throughout our paper, we write $K=-n(n-2)\kappa$ where $0< \kappa<1$. In other words, we will study the system
\begin{align}\label{main:equation:1}
\begin{cases}
L_{g}u-n(n-2)\kappa u^{\frac{n+2}{n-2}}=0,&\text{in}\:M,
\\
B_{g}u+(n-2)u^{\frac{n}{n-2}}=0,&\text{on}\:\partial M.
\end{cases}
\end{align}

Let $[g]$ be the conformal class of $g$. We will be specializing to the case of positive conformal invariant $Q_g(M)$ which is defined by Escobar in \cite{escobar3} as
$$
Q_g(M)=\inf_{\tilde g\in [g]}\frac{\int_M R_{\tilde g}dv_{\tilde g}+2(n-1)\int_{\partial M} h_{\tilde g}d\sigma_{\tilde g}}{\big(\int_Mdv_{\tilde g}\big)^{\frac{n-2}{n}}}.
$$
Here, $dv$ and $d\sigma$ denote the volume and area forms, respectively, and $Q_g(M)$ is proven to be finite. We say that $(M,g)$ is of {\it{positive}}, {\it{zero}} or {\it{negative type}}, according to the sign of $Q_g(M)$. It is important to mention that all the canonical examples mentioned above are of positive type since they are all conformally diffeomorphic to the round hemisphere $\mathbb S^n_+$ which is obviously of positive type.

Our main result is the following:

\begin{theorem}\label{compactness:thm}
	Let $(M,g)$ be a compact $n$-dimensional Riemannian manifold with boundary $\d M$.  Suppose that $M$ is of positive type and it is not conformally equivalent to $\mathbb S^n_+$. 
Assume further that $n=3$ or $M$ is locally conformally flat with $\partial M$ umbilic and $n\geq 3$.
For any $0<\kappa<1$, there exists $C(M,g,\kappa)>0$ such that for any solution $u>0$ of (\ref{main:equation:1}) we have
	$$C^{-1}\leq u\leq C\:\:\:\:\: \text{and}\:\:\:\:\:\|u\|_{C^{2,\a}(M)}\leq C\,,$$
	for some $0<\a<1$.
\end{theorem}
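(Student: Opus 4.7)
The plan is to argue by contradiction via blow-up analysis, following the framework the authors developed for $K>0$ in \cite{almaraz-wang} and for $K=0$ in \cite{almaraz-queiroz-wang}, and adapting it to the negative sign of the interior nonlinearity. Suppose the conclusion fails; then there is a sequence of positive solutions $u_i$ of \eqref{main:equation:1} with $M_i:=\max_M u_i\to\infty$, achieved at points $x_i\to x_0\in M$. Interior concentration is ruled out first: if $x_0\in\mathrm{int}(M)$, the standard rescaling $v_i(y)=M_i^{-1}u_i(x_i+\ei y)$ with $\ei=M_i^{-2/(n-2)}$ produces, in the limit, a positive function $U$ on $\R^n$ satisfying $\D U=n(n-2)\kappa\,U^{(n+2)/(n-2)}\geq 0$ with $0<U\leq U(0)=1$; since bounded subharmonic functions on $\R^n$ are constant, $U\equiv 1$, contradicting $\D U\equiv n(n-2)\kappa>0$. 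Hence the concentration must take place at a point $x_0\in\d M$.

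Working in Fermi coordinates $\fermi$ around $x_i$ and rescaling by $\ei=u_i(x_i)^{-2/(n-2)}$, the key observation is that the interior nonlinearity $\kappa u_i^{(n+2)/(n-2)}$ picks up an extra factor $\ei^2$ under this rescaling, whereas the boundary nonlinearity is scale invariant. The rescaled sequence therefore converges locally to a positive harmonic function $U$ on $\Rn$ satisfying $\d U/\d y_n=-(n-2)U^{n/(n-2)}$ on $\d\Rn$, which by Escobar's classification is a standard bubble. From here the isolated/isolated simple blow-up dichotomy of \cite{almaraz-wang} carries over essentially verbatim; the only new feature is an $O(\ei^2)$ error term from the negative interior nonlinearity, which is absorbed exactly as the $R_g$ term already is. One concludes that the blow-up points are finitely many isolated simple boundary points, with the sharp bubble-like pointwise asymptotics on a fixed Fermi half-ball.

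The heart of the argument is then the Pohozaev identity applied to $u_i$ on $\B$ in Fermi coordinates at an isolated simple blow-up point. After inserting the bubble asymptotics, the flux integral on the outer hemisphere $\d^+\B$ converges to a positive multiple of the boundary ADM-type mass of the Green's function of the pair $(L_g,B_g)$ at $x_0$, which by the positive mass theorem for manifolds with boundary (invoking the hypothesis $(M,g)\not\cong\mathbb S^n_+$) is strictly positive. On the bulk side of the identity, the Weyl tensor and trace-free second fundamental form contributions vanish under our hypotheses, leaving only the usual boundary nonlinear term and the new interior integral $-n(n-2)\kappa\int_{\B}F(x)\,u_i^{2n/(n-2)}\,dv_g$ produced by $K<0$, where $F$ comes from the conformal Killing field used to generate the identity. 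The principal obstacle is the analysis of this last integral: a computation against the explicit bubble shows it contributes at order $O(\ei^0)$ rather than being negligible, so one must identify its precise leading coefficient and verify that $\kappa>0$ makes it enter the Pohozaev balance with a sign that cannot cancel the positive mass term, giving the contradiction. Once the upper bound is in hand, the $C^{2,\a}$ estimate follows from Schauder theory applied to the linearized problem. The lower bound is obtained by rescaling $\tilde u_i=u_i/\min_M u_i$: if $\min_M u_i\to 0$ the nonlinearities degenerate and $\tilde u_i$ subconverges to a nontrivial nonnegative $\tilde u$ solving $L_g\tilde u=0$ in $M$ and $B_g\tilde u=0$ on $\d M$, which the positivity of $Q_g(M)$ together with the Hopf boundary lemma rules out.
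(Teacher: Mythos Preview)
Your central scaling claim is incorrect, and this undermines the whole argument. With $\ei=u_i(x_i)^{-2/(n-2)}$ and $v_i(y)=\ei^{(n-2)/2}u_i(\psi_i(\ei y))$, one has $\Delta v_i(y)=\ei^{(n+2)/2}(\Delta u_i)(\psi_i(\ei y))$ and $v_i^{(n+2)/(n-2)}=\ei^{(n+2)/2}u_i^{(n+2)/(n-2)}$: the interior exponent $(n+2)/(n-2)$ is exactly the critical one, so the term $n(n-2)\kappa\,u_i^{(n+2)/(n-2)}$ does \emph{not} acquire an extra $\ei^2$; it is scale invariant just like the boundary nonlinearity. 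The blow-up limit therefore solves
\[
\Delta U-n(n-2)\kappa\,U^{\frac{n+2}{n-2}}=0\ \text{in }\Rn,\qquad \partial_n U+(n-2)U^{\frac{n}{n-2}}=0\ \text{on }\partial\Rn,
\]
and is \emph{not} the Escobar scalar-flat bubble. The relevant Liouville theorem is due to Chipot--Fila--Shafrir and identifies the limit as the hyperbolic-ball model $U_\kappa$ of Section~\ref{sec:pre}. This is precisely the novelty of the paper.

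Because of this, one cannot import the isolated/isolated-simple machinery from \cite{almaraz-wang} ``verbatim with an $O(\ei^2)$ error''. The linearized operator changes to $\Delta-n(n+2)\kappa U_\kappa^{4/(n-2)}$ in the interior, and the classification of bounded solutions of the linearized problem (the analogue of Lemma~\ref{classifLinear}) must be redone; the paper does this by transferring the problem, via the conformal equivalence $F_\kappa$, to an eigenvalue problem on a geodesic ball in the hyperboloid model of $\mathbb H^n_\kappa$. The correction term $\phi_i$ and the refined pointwise estimate (your ``sharp bubble-like asymptotics'') likewise have to be built around $U_\kappa$; see Propositions~\ref{Linearized} and \ref{estim:blowup:compl}. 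None of this is a perturbation of the $\kappa=0$ case.

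Your Pohozaev picture is correspondingly off. In the identity \eqref{eq:Pohozaev} the interior critical term is absorbed into $P(u,\rho)$ on the flux side as a surface integral over $S_\rho^+$; it does not appear as a bulk integral of $u_i^{2n/(n-2)}$ to be balanced against the mass. The right-hand side only carries $(L_g-\Delta)u$ and $(B_g-\partial_n)u$, and the sign restriction \eqref{eq:cond:sinal} is obtained from the refined estimates around $U_\kappa$, not by computing a leading coefficient of a $\kappa$-dependent bulk term.

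A smaller issue: the statement ``bounded subharmonic functions on $\R^n$ are constant'' is false for $n\geq 3$, so your interior blow-up argument is incomplete. The correct tool is the nonexistence result of Brezis (cited in the paper as \cite{brezis}) for positive solutions of $\Delta v=n(n-2)\kappa\,v^{(n+2)/(n-2)}$ on $\R^n$.
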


The hypothesis of $M$ not being conformally equivalent to $\mathbb S^n_+$ is necessary as the canonical examples mentioned above constitute in fact a family of blowing-up solutions (see Section \ref{sec:pre}).
The hypothesis $0< \kappa<1$ is justified by the canonical example given by the geodesic ball in the hyperbolic space which provides the pattern for blowing up sequences of solutions (see Section \ref{sec:pre}). On the other hand, in the case $\kappa>1$, the lack of solutions to the corresponding Euclidean equations in $\mathbb R^n_+$ allows one to directly rule out the blow-up behaviour. This was done by Han and Li in \cite[Theorem 0.3]{han-li1} for the case of manifolds of negative type. 
As for the case $\kappa=1$,  although there exists a family of solutions to the Euclidean Yamabe equations in $\mathbb R^n_+$, they do not represent compact Riemannian manifolds as they stand for domains in the hyperbolic space bounded by horospheres (see Remark \ref{rmk:horo} below).

Let us briefly discuss Theorem \ref{compactness:thm} in view of previous compactness results for equations \eqref{main:equation:0}.
In the case $K>0$, a compactness theorem was established in \cite{almaraz-wang} for three-dimensional manifolds and in \cite{han-li1} for locally conformally flat manifolds with umbilical boundary. Assuming also $c=0$ and the boundary umbilic, a compactness theorem was proved in \cite{disconzi-khuri} for dimensions $3\leq n\leq 24$.
The case $K=0$ was studied in \cite{almaraz-queiroz-wang, ahmedou-felli2} in dimension three, in \cite{kim-musso-wei} for dimensions four and five, and for locally conformally flat manifolds with umbilical boundary in \cite{ahmedou-felli1}. Theorem \ref{compactness:thm} can be regarded as an extension of the results in \cite{almaraz-queiroz-wang, almaraz-wang, ahmedou-felli1, han-li1} to negative values of $K$.
The novelty here is the use, for compactness problems, of a geodesic ball in the hyperbolic space as the model for blow-up sequences of solutions by means of the Liouville theorem in \cite{chipot-fila-shafrir} (see \cite{cruz-malchiodi-ruiz, cruz-pistoia-vaira} for the use of the same model for similar problems).

Different from the cases $K=0$ and $K>0$, no a priori estimates are obtained here when $K<0$ for subcritical exponents replacing $(n+2)/(n-2)$ and $n/(n-2)$ in equations \eqref{main:equation:0}. This is due to the lack of Liouville type results for this situation, to the best of our knowledge.

The idea of the proof of Theorem \ref{compactness:thm} follows the approach originally proposed by Schoen when he conjectured the compactness of the full set of solutions to the classical Yamabe problem for manifolds without boundary. It consists in ruling out the blow-up behaviour by means of a Pohozaev identity. 
This method was successfully implemented by Khuri-Marques-Schoen \cite{khuri-marques-schoen} to prove the compactness conjecture of Yamabe problems on manifolds without boundary in dimension $3\leq n\leq 24$, while counterexamples in dimensions $n\geq 25$ were provided by Brendle \cite{brendle2} and Brendle-Marques \cite{brendle-marques} (we refer the reader to \cite{berti-malchiodi} for non-smooth counterexamples).
As discussed in the introduction section of \cite{almaraz-wang}, there are some technical difficulties in implementing this approach for manifolds with boundary. The main one is the lack of control, in general, of the correction term obtained in Proposition \ref{Linearized} below, and of the Green's function expansion for the conformal Laplacian. The hypotheses of either low dimension or the manifold being locally conformally flat with umbilical boundary  are used to overcome those difficulties.

As in the case of manifolds without boundary, we also expect the existence of a critical dimension where the compactness for the equations \eqref{main:equation:0} holds in general up to that dimension and fails above it. Blowing up examples in high dimensions were obtained in \cite{almaraz5} for the case $K=0$ and in \cite{chen-wu, disconzi-khuri} for the case $K>0$. Those examples are based on the constructions in \cite{brendle2, brendle-marques} for the case of manifolds without boundary. We believe this constructions can be also extended to our setting with $K<0$.

We now turn to existence results for equations \eqref{main:equation:0}.
For $K=0$, a minimization argument based on an Aubin type inequality was used in \cite{almaraz1, chen, escobar2, coda1, coda2}. 
In the case $K>0$, there are several results using minimization arguments: see \cite{araujo, brendle-chen, chen-sun, escobar2, escobar4} (we refer to \cite{mayer-ndiaye1, mayer-ndiaye2} for the use of topological methods).
However, this minimization approach does not allow one to prescribe both constants $K>0$ and $c\in\mathbb R$ at the same time in equations \eqref{main:equation:0} (unless $c=0$). 
This problem is known as the Han-Li conjecture and was proposed in \cite{han-li1}, where the existence of solutions follows from the compactness result by means of topological degree theory in the case of locally conformally case with umbilical boundary.  
A direct approach to that conjecture is the use of a mountain pass lemma in \cite{chen-ruan-sun, han-li2}.
Unlike the cases $K\geq 0$, there are few existence results to \eqref{main:equation:0} when $K<0$. 
Assuming the manifold is of negative type, an existence theorem was proved in \cite{han-li1} using degree theory. 
Recently, in \cite{cruz-malchiodi-ruiz}, the existence of solutions to \eqref{main:equation:0} on manifolds of non-positive types was obtained via variational methods. 
To our best knowledge, there is no existence results in the case $K<0$ on manifolds of positive type.
In view of the above model based on geodesic balls in $\mathbb H^n$, this motivates us to propose the following:
\begin{conjecture}\label{existence:conj}
Let $(M,g)$ be a compact Riemannian manifold with boundary $\d M$, of positive type, and dimension $n\geq 3$. Given $K<0$ and $c>\sqrt{-(n-2)K/n}$, there exists a smooth solution $u>0$ to equations \eqref{main:equation:0}.
\end{conjecture}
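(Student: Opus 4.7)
The plan is to establish existence via Leray--Schauder degree theory combined with a continuity argument in the coupling constant, with Theorem \ref{compactness:thm} supplying the compactness input. First, I normalize $c = n-2$ and write $K = -n(n-2)\kappa$ with $\kappa \in (0,1)$, as in \eqref{main:equation:1}. Recast the problem as a fixed-point equation: define $T_\kappa : C^{2,\alpha}(M) \to C^{2,\alpha}(M)$ by letting $w = T_\kappa(v)$ solve the linear mixed boundary value problem
\begin{align*}
L_g w &= n(n-2)\kappa (v_+)^{(n+2)/(n-2)} \quad \text{ in } M, \\
B_g w &= -(n-2) (v_+)^{n/(n-2)} \quad \text{ on } \partial M,
\end{align*}
where $v_+ = \max(v, 0)$. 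Since $Q_g(M) > 0$, the bilinear form associated with $(-L_g, -B_g)$ is coercive, so $T_\kappa$ is well-defined, continuous, and compact by elliptic regularity; fixed points of $T_\kappa$ are nonnegative solutions of \eqref{main:equation:1}, which are strictly positive by the strong maximum principle.

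Second, combine the a priori bounds from Theorem \ref{compactness:thm} with the lower bound $u \geq C^{-1}$ to locate all fixed points of $T_\kappa$ inside an open set $\mathcal{O}_\kappa \subset C^{2,\alpha}(M)$ consisting of functions with $C^{-1} < u < C$ and $\|u\|_{C^{2,\alpha}} < C$. The Leray--Schauder degree $\deg(I - T_\kappa, \mathcal{O}_\kappa, 0)$ is thus well-defined; since the bounds in Theorem \ref{compactness:thm} are (by inspection of the contradiction argument via compactness) uniform on compact subintervals $[\kappa_1, \kappa_2] \subset (0,1)$, this degree is constant in $\kappa$ throughout $(0,1)$.

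Third, evaluate the degree by homotoping $\kappa \downarrow 0$. The limit problem is Escobar's $K = 0$ boundary Yamabe problem (normalized so that $c = n-2$), whose positive solutions exist for positive type and form a compact set per \cite{almaraz-queiroz-wang}. The Leray--Schauder degree of the corresponding fixed-point equation was computed in \cite{han-li1} in the locally conformally flat with umbilical boundary case and shown to be a nonzero topological invariant of $M$ in positive type. Propagating this back along the $\kappa$-homotopy, $\deg(I - T_{\kappa_0}, \mathcal{O}_{\kappa_0}, 0) \neq 0$ for every $\kappa_0 \in (0,1)$, and existence of a positive smooth solution follows from elliptic regularity applied to the fixed point.

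The main obstacle is twofold. First, Theorem \ref{compactness:thm} is presently available only for $n = 3$ or for locally conformally flat manifolds with umbilical boundary, whereas the Conjecture is stated in full generality; extending the compactness up to a conjectural critical dimension $n^{\ast}$ requires new estimates on the correction term of Proposition \ref{Linearized} and on the Green's function expansion for $L_g$, paralleling the scalar-curvature program of \cite{khuri-marques-schoen}. Second, showing the degree of the limit Escobar problem is nonzero in full generality requires a fine analysis of the set of solutions and their individual Leray--Schauder indices, since at $\kappa = 0$ the linearization need not be invertible at every solution; this is accessible via \cite{han-li1} in the locally conformally flat case but open in general. An alternative variational strategy, exploiting the favorable sign of the interior nonlinearity ($\kappa > 0$ making the bulk term coercive and dominating the boundary term at infinity), would still have to contend with the failure of Palais--Smale at the critical Sobolev exponent due to the hyperbolic-ball blow-up profile described in Section \ref{sec:pre}, requiring a min-max scheme with sharp threshold analysis analogous to \cite{cruz-malchiodi-ruiz}.
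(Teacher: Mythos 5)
This statement is a \emph{conjecture} in the paper: the authors do not prove it, and they explicitly identify your strategy (degree theory plus a deformation to $\kappa=0$, where the degree is $-1$ by the computation in \cite{ahmedou-felli1}) as a \emph{potential} application of Theorem \ref{compactness:thm}, not a completed argument. Your proposal therefore cannot be judged "correct": it is a program with gaps, and the gaps you half-acknowledge are exactly the open problems the paper isolates. Concretely: (i) the conjecture is stated for every compact $(M,g)$ of positive type and every $n\geq 3$, but the compactness input you invoke (Theorem \ref{compactness:thm}) is only available for $n=3$ or for locally conformally flat manifolds with umbilical boundary, so for general manifolds the very first ingredient of your degree argument is missing; (ii) even in those special cases, your homotopy "$\kappa\downarrow 0$" requires a priori bounds that are \emph{uniform in $\kappa$ on $[0,1-\delta]$}, i.e.\ precisely Conjecture \ref{compactness:conj} of the paper, which is open. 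Theorem \ref{compactness:thm} gives a constant $C(M,g,\kappa)$ for each fixed $\kappa\in(0,1)$; nothing in the paper (or in your "inspection of the contradiction argument") controls this constant as $\kappa\to 0$, where the blow-up model itself degenerates from the hyperbolic-ball profile $U_\kappa$ to the scalar-flat profile $U_0$ and the Liouville classification changes. Without that uniformity you cannot transport the degree from $\kappa=0$ into $(0,1)$, nor even assert constancy of the degree across the homotopy, since solutions could escape any fixed open set as $\kappa$ varies.

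Two further points. The homotopy-invariance step needs a single admissible open set (or a careful continuation argument) valid for all parameter values in the deformation, with no solutions on its boundary; your $\kappa$-dependent sets $\mathcal{O}_\kappa$ do not by themselves give this, and again only uniform bounds would repair it. And the nonvanishing of the degree for the limit problem $\kappa=0$ is itself only established in the locally conformally flat, umbilical-boundary setting (\cite{ahmedou-felli1}); compactness of the $\kappa=0$ solution set in dimension three (\cite{almaraz-queiroz-wang}) does not by itself yield a degree computation there. So even in the cases covered by Theorem \ref{compactness:thm}, your outline reduces the conjecture to two unresolved statements (uniform-in-$\kappa$ compactness and a general degree count at $\kappa=0$) rather than proving it; in full generality it also presupposes a compactness theorem that is not known and is expected to fail above some critical dimension.
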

Solutions to \eqref{main:equation:0} correspond to critical points of the following functional which is defined for any non-negative $u\in H^1(M)$:
$$
I(u)=E(u)-\frac{n-2}{n}K\int_{M}u_+^{\frac{2n}{n-2}}-\frac{n-2}{n-1}c\int_{\d M}u_+^{\frac{2(n-1)}{n-2}}
$$
where $E(u)=\int_{M}\(|\nabla u|^2+\frac{n-2}{4(n-1)}R_gu^2\)+\frac{n-2}{2}\int_{\d M}h_gu^2$ and we are omitting the volume and area forms for simplicity.
Observe that $I(0)=0$ and $I(u)>0$ for small $\|u\|_{H^1(M)}>0$. If $K>0$ and $u_+\nequiv 0$, then $\lim_{t\to\infty}I(tu)=-\infty$. When below the energy level of the spherical caps, this allows one to apply the mountain pass lemma to find critical points of $I$ as done in  \cite{chen-ruan-sun, han-li2}. 
However, in the case $K<0$, $\lim_{t\to\infty}I(tu)=-\infty$ does not hold which prevents us from using the same methods as the positive case.
We believe Conjecture \ref{existence:conj} is an interesting problem that deserves further investigation.

One potential application of the a priori estimates obtained in Theorem~\ref{compactness:thm} is to prove Conjecture  \ref{existence:conj} using the topological degree theory as done in \cite{han-li1}. The idea is to deform equations \eqref{main:equation:0} to another system of equations where the degree is known to be non-zero,  while preserving the a priori bound.
We propose a connection with the case $\kappa=0$ where the degree is $-1$ as calculated in \cite{ahmedou-felli1}:
\begin{conjecture}\label{compactness:conj}
	Let $(M,g)$ be a compact $n$-dimensional Riemannian manifold with boundary $\d M$.  Suppose that $M$ is of positive type and it is not conformally equivalent to $\mathbb S^n_+$. 
Assume further that $n=3$ or $M$ is locally conformally flat with $\partial M$ umbilic and $n\geq 3$.
Given small $\delta>0$, there exists $C(M,g,\delta)>0$ such that for any solution $u>0$ of (\ref{main:equation:1}) with $0\leq \kappa\leq 1-\delta$  we have
	$$C^{-1}\leq u\leq C\:\:\:\:\: \text{and}\:\:\:\:\:\|u\|_{C^{2,\a}(M)}\leq C\,,$$
	for some $0<\a<1$.
\end{conjecture}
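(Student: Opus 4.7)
The plan is to argue by contradiction, lifting the proof of Theorem \ref{compactness:thm} to a family varying in $\kappa$. Suppose there exist sequences $\kappa_i \in [0, 1-\delta]$ and positive solutions $u_i$ of \eqref{main:equation:1} with $\kappa = \kappa_i$ for which no uniform $C^{2,\a}$ bound holds; by elliptic regularity this forces $\max_M u_i \to \infty$. After passing to a subsequence, $\kappa_i \to \kappa_\infty \in [0, 1-\delta]$. Pick blow-up points $x_i$ with $u_i(x_i) = \max_M u_i$, set $\ei = u_i(x_i)^{-2/(n-2)}$, and rescale in Fermi coordinates centered at $x_i$ to obtain $v_i(y) = \ei^{(n-2)/2} u_i(\psi_i(\ei y))$. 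Then $v_i$ satisfies a smooth perturbation of the Euclidean model
\begin{equation*}
\D v - n(n-2)\kappa_i\, v^{(n+2)/(n-2)} = 0 \text{ in } \Rn, \qquad -\d_{y_n} v + (n-2) v^{n/(n-2)} = 0 \text{ on } \d \Rn,
\end{equation*}
in which $\kappa_i$ varies over the compact interval $[0, 1-\delta]$.

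Extract a locally $C^2$ limit $V_\infty$. For $\kappa_\infty > 0$, the Liouville theorem of \cite{chipot-fila-shafrir} identifies $V_\infty$ with the geodesic-ball bubble of $\mathbb H^n$ used in Theorem \ref{compactness:thm}; for $\kappa_\infty = 0$, $V_\infty$ is the standard half-space bubble used in \cite{almaraz-queiroz-wang, ahmedou-felli1}. After an appropriate normalization these bubbles form a single smooth one-parameter family $\{V^{(\kappa)}\}_{\kappa\in[0,1-\delta]}$ depending continuously on $\kappa$, with the hyperbolic geodesic ball's radius tending to infinity as $\kappa \to 0^+$ so that $V^{(\kappa)}$ degenerates to the flat bubble in a controlled way. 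With this continuity in hand, the standard isolated-simple-blow-up analysis (Harnack descent along blow-up sequences, energy decay, and linearized non-degeneracy) carries over uniformly for $\kappa_i \in [0, 1-\delta]$, so every blow-up point is isolated and simple with constants uniform in $\kappa_i$.

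The contradiction is then extracted via the Pohozaev identity on a fixed small half-ball $B^+_r(x_i)$ centered at an isolated simple blow-up point. Writing $u_i$ on this half-ball as a hyperbolic-ball bubble plus a lower-order correction, governed by Proposition \ref{Linearized}, the leading-order boundary integral on $\d B^+_r(x_i)$ converges, after rescaling, to a positive universal constant times a conformally invariant geometric mass: the mass of the Green's function of $L_g$ in the locally conformally flat umbilic case, or its three-dimensional analogue when $n=3$. Under the hypotheses that $(M,g)$ is of positive type and not conformally equivalent to $\mathbb S^n_+$, this mass is strictly positive, furnishing the required sign obstruction. Since the $\kappa_i$-dependent interior contribution depends continuously on $\kappa_i$ on the closed interval $[0, 1-\delta]$, it remains uniformly bounded, and the desired contradiction follows with $C$ depending only on $\delta$.

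The principal obstacle is exactly the uniformity across the transition $\kappa_i \to 0^+$, where the limiting model changes from a hyperbolic-ball bubble to a flat half-space bubble. This requires a Liouville classification valid uniformly on $[0, 1-\delta]$ interpolating \cite{chipot-fila-shafrir} with the standard half-space classification, a uniform non-degeneracy and spectral analysis for the linearized operator around $V^{(\kappa)}$ across the full parameter range, and verification that the correction terms in Proposition \ref{Linearized} remain uniformly bounded as $\kappa \to 0^+$. The endpoint $\kappa = 0$ and each fixed $\kappa \in (0, 1-\delta]$ are already handled by \cite{almaraz-queiroz-wang, ahmedou-felli1} and Theorem \ref{compactness:thm} respectively, so the conjecture essentially amounts to installing continuous dependence on $\kappa$ into these ingredients and ruling out degeneracy of the relevant constants at $\kappa = 0$.
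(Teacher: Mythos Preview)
The statement you are attempting to prove is not a theorem in the paper but Conjecture~\ref{compactness:conj}, which the authors explicitly leave open. The paper offers no proof; it only remarks that the conjecture is supported by Theorem~\ref{compactness:thm} (which handles each fixed $0<\kappa<1$) together with the $\kappa=0$ result from \cite{almaraz-queiroz-wang, ahmedou-felli1}. So there is no ``paper's own proof'' to compare against.

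Your proposal is not a proof either, and you seem to be aware of this: the last paragraph correctly isolates the genuine obstacles (uniform Liouville classification, uniform non-degeneracy of the linearized operator around $V^{(\kappa)}$, uniform control of the correction term from Proposition~\ref{Linearized} as $\kappa\to 0^+$) rather than resolving them. What you have written is an accurate diagnosis of why the conjecture does not follow immediately from concatenating Theorem~\ref{compactness:thm} with the $\kappa=0$ case, and a plausible roadmap for an attack. In particular, your observation that the bubbles $V^{(\kappa)}$ form a continuous family degenerating to the flat bubble is the right geometric picture (this is exactly the $\kappa\to 0$ limit described at the end of Subsection~\ref{subsec:model}).

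One concrete place where more care would be needed: the spectral analysis behind Lemma~\ref{classifLinear} is carried out via the isometry $F_\kappa$ to a geodesic ball $D_{t_0}$ in the hyperboloid model with $\cosh t_0 = 2r_\kappa = 1/\sqrt{\kappa}$, so $t_0\to\infty$ as $\kappa\to 0^+$ and the ball exhausts $\mathcal H_\kappa$. The eigenvalue problem \eqref{eigen:loid} then degenerates, and the Green's function argument in the proof of Proposition~\ref{Linearized} (which invokes a Green's function on $D_{t_0}$) would have to be redone with constants tracked through this degeneration. This is precisely the ``uniform non-degeneracy'' issue you flag, and it is not addressed anywhere in the paper. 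Until that step is carried out, the conjecture remains open.
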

These a priori estimates hold for $0<\kappa<1$, according to Theorem \ref{compactness:thm}, and for $\kappa=0$, according to \cite{almaraz-queiroz-wang}. This supports our conjecture. 

The paper is organized as follows. In Section \ref{sec:pre}, we provide some preliminary results about the model solutions in the Euclidean half space. The definitions and basic properties of isolated and isolated simple blow-up points are collected in Section \ref{sec:isolated}. In Section \ref{sec:blowup:estim}, we prove a refined approximation result for blow-up sequences in terms of the standard Euclidean solutions plus some correction term. The sign restriction for an integral appearing in a Pohozaev type identity is then obtained using such refined estimates. This is done in Section \ref{sec:sign:restr}. Finally in Section \ref{sec:pf:thm}, we prove our main result Theorem \ref{compactness:thm}.


\section{Preliminaries}\label{sec:pre}

\begin{notation}
Throughout this work we will make use of the index notation for tensors and adopt the summation convention whenever confusion is not possible. When dealing with coordinates on manifolds with boundary, we will use indices $1\leq j,l\leq n-1$ and $1\leq a,b,c,d\leq n$. The second fundamental form of the boundary will be denoted by $\pi_{jl}=-g(\nabla_{\partial_j}\eta, \partial_l)$ in local coordinates. 
By $\Rn$, we will denote the half-space $\{z=(z_1,...,z_n)\in \R^n;\:z_n\geq 0\}$. If $z\in\Rn$ we set $\bar{z}=(z_1,...,z_{n-1})\in\R^{n-1}\cong \d\Rn$. We define 
$$B^+_{\delta}(0)=\{z\in\Rn\,;\:|z|<\delta \}, \qquad  S^+_{\delta}(0)=\{z\in\Rn\,;\:|z|=\delta \},$$
and $$D_{\delta}(0)=B^+_{\delta}(0)\cap \d\Rn=\{z\in\d\Rn\,;\:|z|<\delta \}.$$
Thus, $\d B^+_{\delta}(0)=D_{\delta}(0)\cup S^+_{\delta}(0)$.
We also denote $B^+_{\delta}=B^+_{\delta}(0)$,  $S^+_{\delta}=S^+_{\delta}(0)$  and $D_{\delta}=D_{\delta}(0)$ for short. 
Finally, by $O(s)$ we mean a term whose absolute value is uniformly bounded by $s$.

\vspace{0.2cm}
\noindent
{\bf{Agreement.}} As we are assuming $Q(M)>0$, without loss of generality we will assume that $R_g>0$ in $M$ and $h_g=0$ along $\partial M$ as one can simply change \eqref{main:equation:1} by a conformal factor (see \cite{escobar3}).

\end{notation}


Next we recall the definition of Fermi coordinates and some expansions for the metric. 

\begin{definition}\label{def:fermi}
Let $x_0\in\d M$ and choose boundary geodesic normal coordinates $(z_1,...,z_{n-1})$, centred at $x_0$, of the point $x\in\d M$.
We say that $z=(z_1,...,z_n)$, for small $z_n\geq 0$, are the {\it{Fermi coordinates}} (centred at $x_0$) of the point $\exp_{x}(z_n\eta(x))\in M$. Here, we denote by $\eta(x)$ the inward unit normal vector to $\d M$ at $x$. In this case, we have a map  $\psi(z)=\exp_{x}(z_n\eta(x))$, defined on a subset of $\Rn$.
\end{definition}

It is easy to see that in these coordinates $g_{nn}\equiv 1$ and $g_{jn}\equiv 0$, for $j=1,...,n-1$. The expansion for $g$ in Fermi coordinates around $x_0$ is given by:
	\begin{align}\label{exp:g}
	g_{jl}(\psi(z))&=\delta_{jl}-2\pi_{jl}(x_0)z_n+O(|z|^2),\notag
	\\
	g^{jl}(\psi(z))&=\delta_{jl}+2\pi_{jl}(x_0)z_n+O(|z|^2),\notag
\\
{\rm{det}}\,g_{ab}\,(\psi(z))&=1-(n-1)h_g(x_0) z_n+O(|z|^2).
	\end{align}


\subsection{The model solutions in $\mathbb R^n_+$}\label{subsec:model}

Our canonical model is given on  $\mathbb R_+^n$ by the function
\begin{equation}\label{eq:Uk}
U_{\kappa}(y)=\big( \sum_{j=1}^{n-1}y_j^2+(y_n+1)^2-\kappa\big)^{\frac{2-n}{2}},\qquad y\in\mathbb R_+^n,
\end{equation}
where $0<\kappa<1$. A direct computation shows that $U_{\kappa}$ solves the system
\begin{align}\label{eq:U}
\begin{cases}
\Delta U_{\kappa}-n(n-2)\kappa U_{\kappa}^{\frac{n+2}{n-2}}=0,&\text{in}\:\mathbb R_+^n,
\\
\displaystyle\frac{\partial U_{\kappa}}{\partial y_n}+(n-2)U_{\kappa}^{\frac{n}{n-2}}=0,&\text{on}\:\mathbb \partial \R_+^n.
\end{cases}
\end{align}
 Since equations \eqref{eq:U} are invariant by translations in the first $n-1$ coordinates and by scaling with respect to the origin, we obtain the following family of solutions to \eqref{eq:U}:
\begin{equation}\label{form:U}
\epsilon^{-\frac{n-2}{2}}U_{\kappa}\big(\epsilon^{-1}(y_1-z_1,...,y_{n-1}-z_{n-1}, y_n)\big)
=\left(\frac{\epsilon}{\sum_{j=1}^{n-1}(y_j-z_j)^2+(y_n+\epsilon )^2-\epsilon^2\kappa} \right)^{\frac{n-2}{2}},
\end{equation}
where $\epsilon>0$ and $(z_1,...,z_{n-1})\in\mathbb R^{n-1}$. It is proved in \cite{chipot-fila-shafrir} that any positive solution of \eqref{eq:U} is of the form \eqref{form:U} (see also \cite{escobar4}).

A geometric interpretation of the function $U_{\kappa}$ goes as follows. The Euclidean ball is conformally equivalent to the half-space $\Rn$ by the inversion 
$$F:\mathbb{R}_+^n\to B^n\backslash\{ (0,...,0,-1)\}$$
with respect to the unit sphere in $\mathbb R^{n}$ with centre $(0,...,0,-1)$ and radius $1$. Here, $B^n$ is the Euclidean ball in $\R^{n}$ with centre $(0,...,0,-1/2)$ and radius $1/2$. The expression for $F$ is
$$F(y_1,...y_n)=\frac{(y_1,...,y_{n-1},y_n+1)}{y_1^2+...+y_{n-1}^2+(y_n+1)^2}+(0,...,0,-1)\,,$$
and its inverse mapping $F^{-1}$ has the same expression. 

When $0<\kappa<1$, the hyperbolic metric $g_{\mathbb H_\kappa^n}$ is given by
$$g_{\mathbb H_\kappa^n}=\left(\frac{1}{1-\kappa(\xi_1^2+...+\xi_{n-1}^2+(\xi_n+1)^2)}\right) ^2\delta_{\mathbb R^n}$$
on the ball of radius $1/\sqrt{\kappa}$ centred at $(0,...,0,-1)$, where $\delta_{\mathbb R^n}$ denotes the Euclidean metric.
Also denoting by $g_{\mathbb H_\kappa^n}$ its restriction to $B^n$, direct calculations show that $F$ is a conformal map and $F^*g_{\mathbb{H}_\kappa^n}= U_{\kappa}^{\frac{4}{n-2}}\delta_{\mathbb{R}^n}$ in $\Rn$.

When $\kappa\rightarrow 0$, $g_{\mathbb{H}_\kappa^n}$ becomes the Euclidean metric $\delta_{\mathbb{R}^n}$ and $F^*\delta_{\mathbb{R}^n} = U_{0}^{\frac{4}{n-2}}\delta_{\mathbb{R}^n}$ in $\Rn$ where  
$$U_{0}(y)=(y_1^2+...+y_{n-1}^2+(y_n+1)^2)^{\frac{2-n}{2}}$$ 
is a solution to \eqref{eq:U} with $\kappa=0$. This coincides with the scalar-flat case which is modelled by the Euclidean ball (see \cite[Section 2.2]{almaraz3} for a reference).

\begin{remark}\label{rmk:horo}
The classification of positive solutions in the case $\kappa\geq 1$ was also handled in \cite{chipot-fila-shafrir}, and in \cite {escobar4} under natural decay hypothesis. While \eqref{eq:U} admits no positive solutions if $\kappa>1$, the case $\kappa=1$ has the following family of solutions:
$$
W_\epsilon(y)=(2y_1+\epsilon)^{\frac{2-n}{2}},\qquad y\in\mathbb R_+^n,
$$
where $\epsilon>0$. Those functions do not satisfy the decay hypothesis in \cite{escobar4} and are such that $(\mathbb R_+^n, W_\epsilon(y)^{\frac{4}{n-2}}\delta_{\mathbb{R}^n})$ does not extend to a compact manifold as it is isometric to a domain in the hyperbolic space bounded by a horosphere. 
\end{remark}

\subsection{The linearized equation}\label{subsec:linearized}

Fix $0<\kappa< 1$.
The existence of the family of solutions (\ref{form:U}) has two important consequences. Firstly, we see that the set of solutions to equations (\ref{eq:U}) is non-compact. In particular, the set of solutions of (\ref{main:equation:1}) is non-compact when $M$ is conformally equivalent to the round hemisphere. Secondly, the functions $J_j:=\d U_{\kappa}/\d y_j$, for $j=1,...,n-1$, and $J_n:=\frac{n-2}{2}U_{\kappa}+\sum_{b=1}^{n}y^b\d U_{\kappa}/\d y^b$, are solutions to the following homogeneous linear problem:
\ba
\begin{cases}\label{linear:homog}
\Delta\Psi-n(n+2)\kappa U_{\kappa}^{\frac{4}{n-2}}\Psi=0\,,&\text{in}\:\Rn\,,
\\
\displaystyle\frac{\d\Psi}{\d y_n}+nU_{\kappa}^{\frac{2}{n-2}}\Psi=0\,,&\text{on}\:\d\Rn\,.
\end{cases}
\end{align}

As a matter of fact, a converse statement is also true as stated in the next lemma that will be used later in Section \ref{sec:blowup:estim}. 
\begin{lemma}
\label{classifLinear}
Suppose $\Psi$ is a solution of \eqref{linear:homog}.
If $\Psi(y)=O((1+|y|)^{-\alpha})$ for some $\alpha>0$, then there exist constants $c_1,...,c_{n}$ such that
$$
\Psi (y)=\sum_{j=1}^{n-1} c_j\frac{\d U_{\kappa}}{\d y_j}+c_n\Big(\frac{n-2}{2}U_{\kappa}+\sum_{b=1}^ny^b\frac{\d U_{\kappa}}{\d y^b}\Big)\,.
$$
\end{lemma}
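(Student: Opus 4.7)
The plan is to exploit the conformal equivalence, provided by the inversion $F$ of Section \ref{subsec:model}, between $(\Rn, U_\kappa^{4/(n-2)}\delta_{\mathbb R^n})$ and the geodesic ball $(B^n, g_{\mathbb H^n_\kappa})$ in hyperbolic space, thereby compactifying the problem. I would first set $f = \Psi/U_\kappa$ and use the conformal covariance of $L_g$ and $B_g$ under the conformal factor $U_\kappa^{4/(n-2)}$ (which transforms $\delta_{\mathbb R^n}$ into $F^*g_{\mathbb H^n_\kappa}$) to rewrite (\ref{linear:homog}) as a linear boundary value problem for $f\circ F^{-1}$ on $(B^n, g_{\mathbb H^n_\kappa})$, of the form
$$
\Delta_{g_{\mathbb H^n_\kappa}}(f\circ F^{-1}) = A_n\kappa\,(f\circ F^{-1}) \text{ in } B^n, \qquad \frac{\partial(f\circ F^{-1})}{\partial\eta} + B_n\,(f\circ F^{-1}) = 0 \text{ on } \partial B^n,
$$
for explicit constants $A_n, B_n$ depending only on $n$. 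The decay assumption $\Psi(y) = O((1+|y|)^{-\alpha})$ translates, using $U_\kappa(y)\sim|y|^{2-n}$ as $|y|\to\infty$, into the statement that $f\circ F^{-1}$ extends continuously across the image $p=F(\infty)\in\partial B^n$ of the point at infinity; standard elliptic regularity then promotes this to smoothness on the closed ball $\overline{B^n}$.

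Next I would exploit the rotational symmetry of $g_{\mathbb H^n_\kappa}$ about the hyperbolic centre of $B^n$. In geodesic polar coordinates $(r,\omega)\in[0,R)\times S^{n-1}$ centred at this point, the metric takes the standard warped form $dr^2 + \phi(r)^2 g_{S^{n-1}}$. Decomposing $f\circ F^{-1} = \sum_{k\geq 0} f_k(r)Y_k(\omega)$ into spherical harmonics $Y_k$ with $-\Delta_{S^{n-1}}Y_k = k(k+n-2)Y_k$ reduces the interior equation to a family of regular-singular radial ODEs, one for each $k$. Smoothness at $r=0$ isolates a one-parameter family of solutions per mode, and the Robin boundary condition at $r=R$ then becomes a single algebraic condition within that family.

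The heart of the argument is to show that this algebraic condition is satisfied precisely for the degree-one mode $k=1$, whose spherical harmonics on $S^{n-1}$ span an $n$-dimensional space matching the expected kernel dimension. Tracing the explicit Jacobi fields $\partial U_\kappa/\partial y_j$ (for $j=1,\ldots,n-1$) and $\tfrac{n-2}{2}U_\kappa + y^b\partial_b U_\kappa$ through the conformal transformation would then identify them with a basis of the $k=1$ subspace, completing the classification. The main obstacle is precisely this ODE/boundary-condition analysis: one must rigorously rule out $k=0$ as well as $k\geq 2$, which requires tight control of the special-function solutions to the radial equation. A potentially cleaner alternative, worth attempting first, is a Fredholm-type argument: use the $n$-parameter M\"obius group of $\Rn$ preserving $\partial\Rn$ to construct $n$ linearly independent solutions explicitly, and combine the rotational symmetry with a variational characterisation on the compactified problem to show that no further solutions can exist.
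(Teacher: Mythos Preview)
Your approach is essentially the same as the paper's: transfer the problem to the geodesic ball $(B^n,g_{\mathbb H^n_\kappa})$ via the conformal equivalence $F$, use the decay hypothesis to remove the boundary singularity at $F(\infty)$, and then classify solutions by spherical harmonic decomposition. One minor imprecision: the decay $\Psi=O((1+|y|)^{-\alpha})$ with only $\alpha>0$ does not in general give \emph{continuous} extension of $f\circ F^{-1}$ to $p$, since $f=\Psi/U_\kappa$ may grow like $|y|^{n-2-\alpha}$; what it does give is growth $O(|\xi-p|^{\alpha-(n-2)})=o(|\xi-p|^{2-n})$, which is strictly below the fundamental-solution rate and hence suffices for removability of the singularity. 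After that, elliptic regularity does the rest.

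The one substantive difference is how the paper handles the step you flag as the main obstacle. Rather than analysing the radial ODEs mode by mode in the ball model, the paper composes with a further isometry $\gamma_\kappa$ to the hyperboloid model $\mathcal H_\kappa\subset\mathbb R^{1,n}$, where the geodesic ball becomes $D_{t_0}=\{z_0<r_\kappa\cosh t_0\}$. In that model the ambient coordinate functions $z_1,\dots,z_n$ are themselves eigenfunctions of $\Delta_{\mathcal H_\kappa}$ with the correct eigenvalue, and more generally every homogeneous polynomial of degree $\mu$ in $z_1,\dots,z_n$ restricts to an eigenfunction. Decomposing the boundary data on $\partial D_{t_0}\cong S^{n-1}$ into spherical harmonics $\Phi_\mu$ and extending each $\Phi_\mu$ homogeneously therefore reproduces the interior solution by uniqueness for the Dirichlet problem, and the Robin boundary condition then kills all $\mu\neq 1$ by a one-line computation. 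This bypasses the special-function analysis entirely. Your radial-ODE route would also work in principle, but the hyperboloid-model trick is the clean substitute you were looking for; the vaguer Fredholm/M\"obius alternative you mention at the end is not needed.
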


The proof of Lemma \ref{classifLinear} will be given at the end of this subsection.
It makes use of the classification of solutions to an eigenvalue problem on geodesic hyperbolic balls as follows.

We use the hyperboloid model for the hyperbolic space $\mathbb H^n_\kappa$ of curvature $-4\kappa$, i.e.,
$$
\mathcal H_\kappa=\{z=(z_0,...,z_n)\in \mathbb R^{1,n}\,|\:\langle z,z\rangle=-r_\kappa^{-2},\,z_0>0\}
$$
where $r_{\kappa}=1/(2\sqrt{\kappa})$ and $\mathbb R^{1,n}$ is the Minkowski space with inner product 
$$
\langle z,z\rangle=-z_0^2+z_1^2+...+z_n^2.
$$
For $t_0\in\mathbb R$, let
$$
D_{t_0}=\{z\in \mathcal H_\kappa\,|\:z_0< r_\kappa\cosh{t_0}\}
$$
be the geodesic ball of radius $r_\kappa t_0$ centred at $(0,...,0,r_\kappa)$.
Observe that its boundary is given by 
$$
\partial D_{t_0}=\{z\in \mathcal H_\kappa\,|\:z_0= r_\kappa\cosh{t_0},\:z_1^2+...+z_n^2=r_\kappa^2\text{sinh}^2t_0\}.
$$
For $z=(z_0,...,z_n)\in \partial D_{t_0}$, set $r(z)=\sqrt{z_1^2+...+z_n^2}$ and let
$$
\nu(z)=-\frac{1}{r_\kappa}\Big(r(z),\frac{z_0 z_1}{r(z)},..., \frac{z_0 z_n}{r(z)}\Big)
$$
be the inward pointing unit co-normal vector to $\partial D_{t_0}$.
A direct calculation shows that if $f$ is any of the coordinate functions $z_1$, ..., $z_n$, restricted to $D_{t_0}$, then it satisfies 
\begin{equation}\label{eq:f}
    \begin{cases}
        \Delta_{\mathcal H_\kappa}f-nr_\kappa^{-2}f=0,&\text{in}\:D_{t_0},
        \\
        \displaystyle\frac{\partial f}{\partial \nu}+r_\kappa^{-1}(\text{coth}\,t_0) f=0\,,&\text{on}\:\partial D_{t_0}.
    \end{cases}
\end{equation}

\begin{lemma}\label{lemma:eigen:loid}
Let $f$ be a solution of the problem \eqref{eq:f}.
Then $f$ is a linear combination of the coordinate functions $z_1,...,z_n$, restricted to $D_{t_0}$.
\end{lemma}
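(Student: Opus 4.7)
The plan is to work in geodesic polar coordinates $(t,\theta)\in[0,t_0]\times S^{n-1}$ centered at the pole of the ball $D_{t_0}$, in which the induced metric on $\mathcal H_\kappa$ reads $r_\kappa^{2}(dt^{2}+\sinh^{2}t\,g_{S^{n-1}})$ and the inward unit co-normal to $\partial D_{t_0}$ is $\nu=-r_\kappa^{-1}\partial_t$. In these coordinates the Laplace-Beltrami operator separates as
$$
\Delta_{\mathcal H_\kappa}=r_\kappa^{-2}\Big(\partial_t^{2}+(n-1)\coth t\,\partial_t+\frac{1}{\sinh^{2}t}\Delta_{S^{n-1}}\Big),
$$
so that expanding $f(t,\theta)=\sum_{l\ge 0}\sum_k F_{l,k}(t)Y_{l,k}(\theta)$ in spherical harmonics with $-\Delta_{S^{n-1}}Y_{l,k}=l(l+n-2)Y_{l,k}$ reduces \eqref{eq:f} to the family of one-dimensional problems
$$
F_l''+(n-1)\coth t\,F_l'-\Big(n+\frac{l(l+n-2)}{\sinh^{2}t}\Big)F_l=0,\qquad F_l'(t_0)=\coth t_0\,F_l(t_0),
$$
subject to regularity at $t=0$. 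A Frobenius analysis at $t=0$ gives indicial exponents $l$ and $-(l+n-2)$, so for each $l$ there is, up to a multiplicative constant, a unique regular solution $F_l\sim t^{l}$.

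I would then dispose of the two explicit low modes directly. For $l=0$ the regular solution is $F_0(t)=\cosh t$, whose logarithmic derivative at $t_0$ equals $\tanh t_0\neq\coth t_0$, so the boundary condition forces its coefficient to vanish. For $l=1$ the function $F_1(t)=\sinh t$ is an explicit regular solution, and directly $F_1'(t_0)/F_1(t_0)=\coth t_0$; paired with the $n$-dimensional space of degree-one spherical harmonics on $S^{n-1}$, spanned by the restrictions $\theta_i=z_i/(r_\kappa\sinh t)$, this mode contributes exactly the $n$ functions $r_\kappa\sinh t\cdot\theta_i=z_i|_{D_{t_0}}$.

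The main obstacle, and the core of the argument, is to rule out every higher mode $l\ge 2$. For this I would introduce the Riccati-type quantity $K(t):=F_l'(t)/F_l(t)-\coth t$ attached to the regular solution $F_l$. A direct computation from the ODE, using the identity $\coth^{2}t-1=1/\sinh^{2}t$, yields
$$
K'+K^{2}+(n+1)\coth t\,K=\frac{(l-1)(l+n-1)}{\sinh^{2}t}.
$$
Since $F_l(t)\sim t^{l}$ as $t\to 0^{+}$, one has $K(t)\sim(l-1)/t\to+\infty$, so $K>0$ on a right neighborhood of the origin whenever $l\ge 2$. If $K$ were to vanish at a first $t^{\ast}\in(0,t_0]$, then $K(t^{\ast})=0$ and $K'(t^{\ast})\le 0$; substituting these values into the displayed Riccati equation would force $K'(t^{\ast})=(l-1)(l+n-1)/\sinh^{2}t^{\ast}>0$, a contradiction. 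Hence $K>0$ throughout $(0,t_0]$, so the regular solution cannot satisfy $F_l'(t_0)=\coth t_0\,F_l(t_0)$, and the modes $l\ge 2$ contribute nothing.

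Combining the three cases shows that every solution of \eqref{eq:f} lies in the $n$-dimensional span of the coordinate functions $z_1,\ldots,z_n$, as claimed.
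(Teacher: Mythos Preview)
Your argument is correct. The separation of variables is valid, the radial ODE and the boundary condition $F_l'(t_0)=\coth t_0\,F_l(t_0)$ are derived correctly, the explicit solutions $\cosh t$ and $\sinh t$ for $l=0,1$ check out, and the Riccati computation for $K=F_l'/F_l-\coth t$ is right (the identity $l(l+n-2)-(n-1)=(l-1)(l+n-1)$ makes the right-hand side strictly positive for $l\ge2$). One small point worth making explicit is that $K$ is well defined on all of $(0,t_0]$: since $K>0$ forces $F_l'/F_l>\coth t>0$, the function $F_l$ is increasing wherever $K>0$ and therefore cannot vanish before $K$ does; hence the ``first zero of $K$'' argument is legitimate.

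Your route differs from the paper's. The paper restricts $f$ to the boundary sphere $\partial D_{t_0}$, decomposes that trace into spherical harmonics, extends each piece as a homogeneous polynomial in the ambient coordinates $z_1,\dots,z_n$, and then invokes uniqueness for the interior Dirichlet problem together with the Neumann condition to kill the degrees $\mu\neq1$. Your approach instead separates variables in the interior from the start and handles the resulting one-dimensional problem by the Riccati monotonicity trick. The paper's argument is shorter and more structural, leaning on the ambient Minkowski coordinates; yours is more self-contained and robust, requiring no special extensions or uniqueness statements and yielding the sharper quantitative fact $F_l'(t_0)>\coth t_0\,F_l(t_0)$ for all $l\ge2$.
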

\begin{proof}
Let $f$ be a solution of the problem \eqref{eq:f} and let $\Phi$ be its restriction to $\partial D_{t_0}$. 
Set $r_0=r_\kappa \text{sinh}\,t_0$.
Due to the decomposition in spherical harmonics, we write 
$$
\Phi=\sum_{\mu=0}^\infty \Phi_\mu,
$$
where each $\Phi_\mu$ satisfies
$$
\Delta_{\mathbb S_{r_0}^{n-1}}\Phi_\mu+\frac{n-1}{r_0^2}\Phi_\mu=0,
$$
and is a restriction of a homogeneous polynomial $\widetilde \Phi_\mu$ of degree $\mu$ in the coordinates $z_1$, ..., $z_n$. It is a direct computation to check that each $\widetilde\Phi_\mu$ satisfies  
$$
\Delta_{\mathcal H_\kappa}\widetilde\Phi_\mu-nr_\kappa^{-2}\widetilde\Phi_\mu=0.
$$
As $f$ and $\sum_\mu\widetilde\Phi_\mu$ satisfy the same interior equation in $D_{t_0}$ and coincide on $\partial D_{t_0}$, we have
$$
f=\sum_\mu\widetilde\Phi_\mu.
$$
Using the boundary condition in \eqref{eq:f}, we readily see that $\widetilde\Phi_\mu=0$ for any $\mu\neq 1$.
\end{proof}

\begin{proof}[Proof of Lemma \ref{classifLinear}]
Recall that the operators $L_g$ and $B_g$ satisfy the following conformal transformation laws
$$
L_{\xi^{\frac{4}{n-2}}g}(\xi^{-1}u)=\xi^{-\frac{n+2}{n-2}}L_gu
\qquad\text{and}\qquad
B_{\xi^{\frac{4}{n-2}}g}(\xi^{-1}u)=\xi^{-\frac{n}{n-2}}B_gu,
$$
for any smooth functions $\xi>0$ and $u$.
Hence, the equations \eqref{linear:homog} are equivalent to 
\begin{align*}
	\begin{cases}
		\Delta_{g_{\mathbb{H}_\kappa^n}} \overline{\Psi} -4n\kappa\overline \Psi= 0\,,&\text{in}\:\:B^n\backslash\{(0,...,0,-1)\},\\
		\displaystyle\frac{\partial \overline{\Psi}}{\partial \eta}+2\overline{\Psi}=0\,,&\text{on}\:\:\partial B^n\backslash\{(0,...,0,-1)\},
	\end{cases}
 \end{align*}
with $\overline\Psi=(U_\kappa^{-1}\Psi)\circ F^{-1}$, where $\eta$ stands for the inward unit normal vector. The hypothesis $\Psi(y)=O((1+|y|)^{-\alpha})$ implies that the singularity at $(0,...,0,-1)$ is removable so that $\overline\Psi$ solves
\begin{align*}
	\begin{cases}
		\Delta_{g_{\mathbb{H}_\kappa^n}} \overline{\Psi} -4n\kappa\overline \Psi= 0\,,&\text{in}\:\:B^n,\\
		\displaystyle\frac{\partial \overline{\Psi}}{\partial \eta}+2\overline{\Psi}=0\,,&\text{on}\:\:\partial B^n.
	\end{cases}
\end{align*}
Now choose $t_0\in\mathbb R$ such that $\text{cosh}\, t_0=2r_\kappa$. This allows us to choose an isometry $\gamma_\kappa$ from $(B^n, g_{\mathbb H_\kappa^n})$ to $D_{t_0}\subset \mathbb R^{1,n}$. Denote
$$
F_{\kappa}=\gamma_{\kappa}\circ F: \R^n_+\to D_{t_0}.
$$
Then $f=\overline{\Psi}\circ {\gamma_{\kappa}}^{-1}=(U_{\kappa}^{-1}\Psi)\circ F_{\kappa}^{-1}$ is a solution to 
\begin{align}\label{eigen:loid}
	\begin{cases}
		\Delta_{\mathcal H_\kappa}f -4n\kappa f= 0\,,&\text{in}\:\: D_{t_0},\\
		\displaystyle\frac{\partial f}{\partial \nu}+2f=0\,,&\text{on}\:\:\partial D_{t_0}.
	\end{cases}
\end{align}
In other words, $\Psi$ is a solution to  \eqref{linear:homog} if and only if $f$ solves \eqref{eigen:loid}. According to Lemma \ref{lemma:eigen:loid}, the eigenspace of \eqref{eigen:loid} is an $n$-dimensional space spanned by the restrictions of the coordinate functions $\{z_1,...,z_n\}$ to $D_{t_0}$. Therefore, the $n$-dimensional space spanned by $\{J_1,...,J_n\}$ contains all the solutions to \eqref{linear:homog} since $\{J_1,...,J_n\}$ are linearly independent solutions to \eqref{linear:homog}. This ends the proof of Lemma~\ref{classifLinear}.
\end{proof}




\section{Isolated and isolated simple blow-up points}
\label{sec:isolated}

In this section, we collect the definitions and main results on isolated and isolated simple blow-up sequences for equations \eqref{main:equation:0}. We refer the reader to \cite{han-li1} for proofs and more details.
Although commonly stated for the case $K\geq 0$ in \eqref{main:equation:0}, those results also hold for $K<0$ with some minor modifications.

\begin{definition}\label{def:blow-up}
	Let $M$ be a compact manifold with boundary and dimension $n\geq 3$, and let $\Omega\subset M$ be a domain satisfying $\Omega\cap\partial M\neq\emptyset$. Given $0<\kappa<1$, let $\{u_i>0\}_{i=1}^{\infty}$ be a sequence satisfying  
\begin{align}\label{eq:blow-up}
\begin{cases}
L_{g_i}u_i-n(n-2)\kappa u_i^{\frac{n+2}{n-2}}=0,&\text{in}\:\Omega,
\\
B_{g_i}u_i+(n-2)u_i^{\frac{n}{n-2}}=0,&\text{on}\:\Omega\cap \partial M,
\end{cases}
\end{align}
where $g_i$ is a metric on $\Omega$.
We say that $x_0\in \Omega\cap\partial M$ is a {\it{blow-up point}} for $\{u_i\}$ if there is a sequence $\{x_i\}\subset \Omega\cap \partial M$ such that, as $i\to\infty$,

	(1) $x_i\to x_0$;
	
	(2) $u_i(x_i)\to\infty$; 
	
	(3) $x_i$ is a local maximum of $u_i$.\\
	The sequence $\{x_i\}$ is called a {\it{blow-up sequence}}. Briefly we say that $x_i\to x_0$ is a blow-up point for $\{u_i\}$, meaning that $\{x_i\}$ is a blow-up sequence converging to the point $x_0$. Throughout the paper we will assume $g_i\to g_0$ in $C^2(\Omega)$. 
\end{definition}

\noindent
{\bf{Agreement.}} If $x_i\to x_0$ is a blow-up point, we use Fermi coordinates $$\fermi$$ centred at $x_i$ and work in $B^+_{\delta}(0)\subset\Rn$, for some small $\delta>0$.
\begin{definition}\label{def:isolado}
	We say that a blow-up point $x_i\to x_0$ is an {\it{isolated}} blow-up point for $\{u_i\}$ if there exist $\delta,C>0$ such that 
	\begin{equation}\notag
	u_i(x)\leq Cd_{g_i}(x,x_i)^{-\frac{n-2}{2}}\,,\:\:\:\:\text{for all}\: x\in M\backslash \{x_i\}\,,\:d_{g_i}(x,x_i)< \delta\,.
	\end{equation}
\end{definition}
The above definition is equivalent to
	\begin{equation}\label{des:isolado}
	u_i(\psi_i(z))\leq C|z|^{-\frac{n-2}{2}}\,,\:\:\:\:\text{for all}\: z\in B^+_{\delta}(0)\backslash\{0\}\,.
	\end{equation}
	This estimate is invariant under re-scaling. This follows from the fact that if $v_i(y)=s^{\frac{n-2}{2}}u_i(\psi_i(sy))$, then
	$$
	u_i(\psi_i(z))\leq C|z|^{-\frac{n-2}{2}}\Longleftrightarrow v_i(y)\leq C|y|^{-\frac{n-2}{2}}\,,
	$$
	where $z=sy$.
	
	Harnack inequalities give the following:
\begin{lemma}\label{Harnack}
	Let $x_i\to x_0$ be an isolated blow-up point and $\delta$ as in Definition \ref{def:isolado}. 
	Then there exists $C>0$ such that for any $0<s<\frac{\delta}{3}$ we have
	\begin{equation}\notag
	\max_{B_{2s}^+(0)\backslash B_{s/2}^+(0)} (u_i\circ\psi_i)\leq C\min_{B_{2s}^+(0)\backslash B_{s/2}^+(0)} (u_i\circ\psi_i)\,.
	\end{equation}
\end{lemma}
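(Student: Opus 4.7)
The plan is to reduce to the classical Harnack inequality for linear second-order elliptic equations with an oblique (Neumann-type) boundary condition by means of a scaling argument that exploits exactly the invariance noted just before the lemma.

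First I would rescale: fix $s\in(0,\delta/3)$ and set
$$
v_i(y)=s^{\frac{n-2}{2}}(u_i\circ\psi_i)(sy),\qquad y\in B^+_{R},
$$
with $R$ slightly bigger than $2$ (say $R=5/2$), and consider the pulled-back metric $g_i^{(s)}(y):=(\psi_i^*g_i)(sy)$. A direct computation shows that $v_i$ satisfies
\begin{align*}
\begin{cases}
L_{g_i^{(s)}}v_i-n(n-2)\kappa\, v_i^{\frac{n+2}{n-2}}=0,&\text{in}\:B^+_R,
\\
B_{g_i^{(s)}}v_i+(n-2)v_i^{\frac{n}{n-2}}=0,&\text{on}\:D_R,
\end{cases}
\end{align*}
because the critical exponents are exactly the ones that make the system invariant under this scaling (and the scalar/mean-curvature terms pick up an extra factor of $s^2$ and $s$ respectively, which is harmless). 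Since $g_i\to g_0$ in $C^2$, the rescaled metrics $g_i^{(s)}$ have ellipticity constants and $C^1$-norms that are uniformly bounded on $B^+_R$ independently of $i$ and of $s\in(0,\delta/3)$; in fact $g_i^{(s)}\to\delta_{\R^n}$ as $s\to 0$.

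Next I would invoke the isolated blow-up bound \eqref{des:isolado}, which is scale-invariant as explained in the text, to conclude
$$
v_i(y)\leq C|y|^{-\frac{n-2}{2}}\leq C'\qquad\text{for }\tfrac{1}{2}\leq|y|\leq 2,
$$
with $C'$ independent of $i$ and $s$. Hence on the annular region $A:=\{y\in B^+_R\,:\,1/3\leq|y|\leq 5/2\}$ the nonlinear quantities $\kappa\, v_i^{4/(n-2)}$ and $v_i^{2/(n-2)}$ are uniformly bounded, so we may rewrite the problem as the linear system
\begin{align*}
\begin{cases}
\Delta_{g_i^{(s)}}v_i+a_i(y)\,v_i=0,&\text{in}\:A,
\\
\displaystyle\frac{\partial v_i}{\partial \eta_{g_i^{(s)}}}+b_i(y)\,v_i=0,&\text{on}\:A\cap\partial\Rn,
\end{cases}
\end{align*}
with $\|a_i\|_{L^\infty(A)}+\|b_i\|_{L^\infty(A\cap\partial\Rn)}\leq C''$ uniformly in $i,s$.

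Finally I would apply the standard Harnack inequality for non-negative solutions of uniformly elliptic equations with bounded zero-order coefficients, in the version that handles an oblique boundary condition up to a portion of flat boundary (obtained e.g.\ by reflection across $\{y_n=0\}$ after the usual change of variables that flattens the conormal, or directly via Moser iteration with boundary). This gives
$$
\max_{\{1/2\leq|y|\leq 2\}\cap \Rn}v_i\leq C\min_{\{1/2\leq|y|\leq 2\}\cap \Rn}v_i
$$
with $C$ independent of $i$ and $s$, and undoing the scaling $z=sy$ yields the stated estimate. The only non-routine point is the boundary version of the Harnack inequality, but this is classical for divergence-form equations with a bounded oblique condition, so no real obstacle arises once the rescaling and the uniform $L^\infty$ bound on the annulus are in place.
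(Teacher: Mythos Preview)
Your argument is correct and is exactly the standard rescaling-plus-linear-Harnack approach that the paper has in mind; the paper does not spell out a proof but refers to \cite{han-li1}, where precisely this scheme (rescale so the annulus becomes a fixed one, use the scale-invariant isolated bound to make the nonlinear terms into uniformly bounded potentials, then apply the boundary Harnack inequality of \cite[Lemma A.1]{han-li1}) is carried out. Nothing needs to be added.
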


As a consequence, we state the next proposition which says that, in the case of an isolated blow-up point, the sequence $\{u_i\}$, when renormalized, converges to the standard solutions \eqref{eq:Uk}.

\begin{proposition}\label{form:bolha}
	Let $x_i\to x_0$ be an isolated blow-up point. Given $R_i\to\infty$ and $0<\b_i\to 0$, after choosing subsequences, we have  
	\\\\
	(1) $\big| u_i(x_i)^{-1}u_{i}(\psi_i(u_i(x_i)^{-\frac{2}{n-2}}y))-\lambda^{\frac{n-2}{2}}U_{\kappa}(\lambda y) \big|_{C^2(B^+_{R_i}(0))}<\b_i,$ where $\lambda=1-\kappa$;
	\\\\
	(2) $\displaystyle\lim_{i\to\infty}\frac{R_i}{\log u_i(x_i)}= 0$.
\end{proposition}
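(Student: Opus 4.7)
The plan is the standard rescaling and Liouville-type argument. Set $\ei:=u_i(x_i)^{-2/(n-2)}\to 0$ and introduce
$$
v_i(y):=\ei^{(n-2)/2}\,u_i(\psi_i(\ei y)),\qquad y\in\Bei,
$$
so that $v_i(0)=1$ and $y=0$ is a local maximum of $v_i$. A direct scaling computation, together with the agreement $h_g\equiv 0$ and the Fermi expansion \eqref{exp:g}, shows that $v_i$ satisfies a rescaled system of the type \eqref{eq:blow-up} with a metric $\tilde g_i\to\delta_{\R^n}$ in $C^2_{\rm loc}(\overline{\Rn})$ and whose scalar-curvature coefficient picks up an extra factor $\ei^2$ that forces it to zero, so the formal limit system is exactly \eqref{eq:U}.

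First I would extract a subsequential limit of $\{v_i\}$. The isolated blow-up bound \eqref{des:isolado} is scale invariant and yields $v_i(y)\leq C|y|^{-(n-2)/2}$ on $\Bei\setminus\{0\}$, giving uniform bounds on compact subsets of $\overline{\Rn}\setminus\{0\}$. Combined with $v_i(0)=1$, Lemma~\ref{Harnack} propagates the bound through the origin, and standard interior/boundary Schauder estimates then upgrade everything to uniform $C^{2,\alpha}_{\rm loc}(\overline{\Rn})$ bounds. Passing to a subsequence, $v_i\to v$ in $C^2_{\rm loc}(\overline{\Rn})$, where $v\geq 0$ solves \eqref{eq:U}, has $v(0)=1$, and admits a local maximum at $y=0$; the strong maximum principle together with the Hopf lemma then promote this to $v>0$ on $\overline{\Rn}$.

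The limit is then identified via the classification of \cite{chipot-fila-shafrir}: $v$ must have the form \eqref{form:U} for some $\epsilon>0$ and $z\in\R^{n-1}$. The local maximum at $y=0$ forces the horizontal centre $z=0$, after which $v(0)=1$ reduces to $(\epsilon\lambda)^{-(n-2)/2}=1$ with $\lambda=1-\kappa$, i.e.\ $\epsilon=\lambda^{-1}$; substituting back and simplifying yields $v(y)=\lambda^{(n-2)/2}U_\kappa(\lambda y)$. Conclusions (1) and (2) follow by a diagonal argument: for each fixed $R>0$ the $C^2(B_R^+(0))$-distance from $v_i$ to $\lambda^{(n-2)/2}U_\kappa(\lambda\,\cdot)$ tends to zero along the chosen subsequence, and after a further diagonal extraction one may take $R_i\to\infty$ slowly enough that this distance stays below $\beta_i$ on $B^+_{R_i}(0)$ while simultaneously $R_i=o(\log u_i(x_i))$, which is possible because $\log u_i(x_i)\to\infty$. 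The main technical obstacle is the uniform boundary Schauder regularity for the rescaled nonlinear-Neumann system near $y=0$, but this is routine once the $C^2_{\rm loc}$ convergence of $\tilde g_i$ has been established.
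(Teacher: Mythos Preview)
Your argument is correct and is exactly the standard rescaling--plus--Liouville approach; the paper does not supply its own proof of this proposition but simply refers to \cite{han-li1}, where the same method is used, the only new ingredient here being that the limit is identified via the classification in \cite{chipot-fila-shafrir} for $0<\kappa<1$. Your normalization check (the local maximum at $0$ forces the horizontal centre $z=0$, and $v(0)=1$ forces $\epsilon=\lambda^{-1}$, yielding $v=\lambda^{(n-2)/2}U_\kappa(\lambda\,\cdot)$) is exactly right.
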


The set of blow-up points is handled in the next proposition.
\begin{proposition}\label{conj:isolados}
	Given $0<\kappa<1$, small $\b>0$ and large $R>0$, there exist constants $C_0, C_1>0$, depending only on $\kappa$, $\b$, $R$ and $(M^n,g)$, such that if $u>0$ is solution of \eqref{main:equation:1} with  
$$
\max_{M} u\geq C_0,
$$
then there exist $x_1,...,x_N\in\d M$, $N=N(u)\geq 1$, local maxima of $u$, such that:
	\\\\
	(1) If $r_j=Ru(x_j)^{-\frac{2}{n-2}}$ for $j=1,...,N$,  then $\{B_{r_j}(x_j)\subset M\}_{j=1}^{N}$ is a disjoint collection, where $B_{r_j}(x_j)$ is the metric ball.
	\\\\
	(2) For each $j=1,...,N$, 
	$\:\:\:\:
	\left|u(x_j)^{-1}u(\psi_j(z))-\lambda^{\frac{n-2}{2}}U_{\kappa}(\lambda u(x_j)^{\frac{2}{n-2}}z)\right|_{C^2(B^+_{2r_j}(0))}<\b
	$,
	\\
	where we are using Fermi coordinates $\psi_j:B^+_{2r_j}(0)\to M$ centred at $x_j$ and setting $\lambda=1-\kappa$.
	\\\\
	(3) We have
	$$
	u(x)\,d_{g}(x,\{x_1,...,x_N\})^{\frac{n-2}{2}}\leq C_1\,,\:\:\:\text{for all}\: x\in M\,,
	$$
	$$
	u(x_j)\,d_{g}(x_j,x_l)^{\frac{n-2}{2}}\geq C_0\,,\:\:\:\:\text{for any}\: j\neq l\,,\:j,l=1,...,N\,.
	$$ 
\end{proposition}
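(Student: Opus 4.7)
The plan is to follow the standard Schoen--Han--Li selection scheme, adapted to the negative $K$ setting by replacing the usual bubble with the one from Section~\ref{subsec:model} and invoking the Liouville theorem of \cite{chipot-fila-shafrir}.

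First, under the Agreement $R_g>0$, $h_g=0$, an interior positive maximum $x_0 \in M \setminus \partial M$ of any solution to \eqref{main:equation:1} would force $\Delta_g u(x_0)\leq 0$, which contradicts the identity
\[
\Delta_g u = \tfrac{n-2}{4(n-1)}R_g\, u + n(n-2)\kappa\, u^{(n+2)/(n-2)} > 0
\]
coming from \eqref{main:equation:1}. Hence every maximum of $u$ lies on $\partial M$. Pick $x_1\in\partial M$ with $u(x_1)=\max_M u$; by taking $C_0$ large, $u(x_1)$ is as large as we please. In Fermi coordinates $\psi_1$ at $x_1$, set $v(y)=u(x_1)^{-1}u(\psi_1(u(x_1)^{-2/(n-2)}y))$. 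The pulled-back metric converges to $\delta_{\mathbb R^n}$ in $C^2_{\rm loc}$, $v$ solves the rescaled counterparts of \eqref{main:equation:1} on a ball of radius $\delta\, u(x_1)^{2/(n-2)}\to\infty$, $v(0)=1$, and $0$ is a local maximum of $v$. Standard elliptic estimates yield $C^2_{\rm loc}$ subsequential convergence to a positive solution $v_\infty$ of \eqref{eq:U} with $v_\infty(0)=1$, and the Chipot--Fila--Shafrir classification \eqref{form:U} forces $v_\infty(y)=\lambda^{(n-2)/2}U_\kappa(\lambda y)$ with $\lambda = 1-\kappa$, the unique dilation normalizing the value at the origin to $1$. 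This proves (2) at $x_1$ provided $C_0=C_0(\kappa,\beta,R)$ is large enough.

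For the iteration, suppose $\{x_1,\dots,x_k\}\subset\partial M$ satisfy (1), (2), and the second inequality of (3), but the first inequality of (3) fails, i.e. some $y\in M$ has $u(y)\,d_g(y,\{x_j\})^{(n-2)/2}>C_1$. Following the usual trick, let $x_{k+1}$ maximize the weighted functional
\[
F_{k+1}(x) = u(x)\, d_g(x,\{x_1,\dots,x_k\})^{(n-2)/2}.
\]
Since $F_{k+1}(x_{k+1})\geq F_{k+1}(y) > C_1$ and $u$ has no interior positive maxima, $x_{k+1}\in\partial M$; choosing $C_1$ large compared to $C_0$ forces both $u(x_{k+1})\to\infty$ and $d_g(x_{k+1},x_j)\, u(x_{k+1})^{2/(n-2)}\to\infty$ for each $j\leq k$. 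The second consequence delivers the disjointness in (1) on the scale $r_{k+1}+r_j$ and the second inequality of (3). The first consequence, combined with the fact that the weight $d_g(x,\{x_j\})^{(n-2)/2}$ varies by $(1+o(1))$ over the scale $r_{k+1}$, makes $x_{k+1}$ a genuine local maximum of $u$ itself, and the rescaling argument of the previous paragraph at $x_{k+1}$ yields (2) for $x_{k+1}$.

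Termination for the given $u$ is then immediate: by (2) each $B_{r_j}(x_j)$ carries $L^{2n/(n-2)}$-mass of $u$ bounded below by a constant $c(R,\kappa)>0$ coming from the bubble profile, the balls are disjoint, and $\int_M u^{2n/(n-2)}\,dv_g<\infty$ for our smooth $u$, giving an upper bound on $N$ depending only on $u$. The main technical difficulty lies in the iterative step: one must coordinate the choice of the constants $C_0,C_1,R,\beta$ so that the weighted maximizer $x_{k+1}$ is simultaneously a local max of $u$, that its rescaling is $\beta$-close to $\lambda^{(n-2)/2}U_\kappa(\lambda\cdot)$ on $B^+_{2r_{k+1}}$, and that the separation of the selected points outstrips the corresponding bubble radii. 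Compared with the treatment of the case $K\geq 0$ in \cite{han-li1}, the only novelty is the identification of the limit with $\lambda^{(n-2)/2}U_\kappa(\lambda\cdot)$, $\lambda = 1-\kappa$, through the Liouville theorem of \cite{chipot-fila-shafrir}.
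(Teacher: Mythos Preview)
Your overall strategy is the same as the paper's: it refers to \cite[Proposition~1.1]{han-li1} and extracts the key step as Lemma~\ref{compact:set:lemma}, then iterates exactly as you outline. However, there is a genuine gap in your iterative step. You write that ``$u$ has no interior positive maxima'' forces the weighted maximizer $x_{k+1}$ of $F_{k+1}(x)=u(x)\,d_g(x,\{x_1,\dots,x_k\})^{(n-2)/2}$ to lie on $\partial M$. This does not follow: $x_{k+1}$ is a maximizer of $F_{k+1}$, not of $u$, and the absence of interior local maxima of $u$ says nothing about where the maximum of the weighted functional sits. Your later remark that the weight varies by $(1+o(1))$ on the scale $r_{k+1}$ compares $r_{k+1}$ to $d_g(x_{k+1},\{x_j\})$, not to $d_g(x_{k+1},\partial M)$, so it does not repair this.

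The correct argument, which is precisely what the paper isolates in Lemma~\ref{compact:set:lemma}, is to rescale around the weighted maximizer and rule out the possibility that it sits far from $\partial M$ in rescaled units. If $u(x_{k+1})^{2/(n-2)}\,d_g(x_{k+1},\partial M)\to\infty$, the rescaled limit would be a nonnegative solution of $\Delta v = n(n-2)\kappa\, v^{(n+2)/(n-2)}$ on all of $\mathbb{R}^n$ with $v(0)=1$, and by \cite[Lemma~2]{brezis} this forces $v\equiv 0$, a contradiction. Only after this interior Liouville step can one conclude that the blow-up happens near the boundary and then locate a genuine local maximum of $u$ on $\partial M$ (possibly after a small shift of $x_{k+1}$) satisfying (2). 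This is the one extra input beyond the $K\geq 0$ case that your sketch omits.
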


The proof of Proposition \ref{conj:isolados} is similar to \cite[Proposition 1.1]{han-li1} and is based on the following lemma:

\begin{lemma}\label{compact:set:lemma}
Given $0<\kappa<1$ and $R,\beta>0$, there exists $C_0=C_0(\kappa, R,\beta)>0$ such that if $u>0$ is a solution of  \eqref{main:equation:1} and $S\subset M$ is a compact set, we have the following:

If $\max_{x\in M\backslash S}\left(u(x)d_g(x,S)^{\frac{n-2}{2}}\right)\geq C_0$, then there exists $x_0\in\partial M\backslash (S\cap \partial M)$ local maximum of $u$, such that 
\begin{equation}\label{eq:approx:cpct}
\left|u(x_0)^{-1}u(\psi(z))-\lambda^{\frac{n-2}{2}}U_{\kappa}(\lambda u(x_0)^{\frac{2}{n-2}}z)\right|_{C^2(B^+_{2r_0}(0))}<\b
\end{equation}
where $r_0=Ru(x_0)^{-\frac{2}{n-2}}$ and $\lambda=1-\kappa$. Here, we are using Fermi coordinates $\psi:B^+_{r_0}(0)\to M$ centred at $x_0$.  If $S$ is the empty set $\emptyset$, we define $d_g(x,S)=1$.
\end{lemma}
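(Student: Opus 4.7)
I would argue by contradiction along the lines of the Schoen-type blow-up analysis used in \cite{han-li1}. Suppose the conclusion fails: there exist sequences of positive solutions $\{u_i\}$ of \eqref{main:equation:1}, compact sets $S_i\subset M$, with
$T_i:=\max_{x\in M\setminus S_i}u_i(x)d_g(x,S_i)^{(n-2)/2}\to\infty$,
but no boundary local maximum $x_0\in\partial M\setminus(S_i\cap\partial M)$ of $u_i$ satisfies \eqref{eq:approx:cpct}. Pick $y_i\in M\setminus S_i$ attaining $T_i$, and put $\sigma_i=\tfrac{1}{2}d_g(y_i,S_i)$, $L_i=u_i(y_i)$. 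Since $d_g(x,S_i)\in[\sigma_i,3\sigma_i]$ on $B_{\sigma_i}(y_i)$ and $y_i$ is the weighted maximizer,
$$
u_i(x)\leq 2^{(n-2)/2}L_i\quad\text{on }B_{\sigma_i}(y_i),\qquad L_i\sigma_i^{(n-2)/2}=2^{-(n-2)/2}T_i\to\infty,
$$
so $L_i\to\infty$ (as $\sigma_i$ is bounded by the diameter) and $\sigma_i/r_i\to\infty$ where $r_i=L_i^{-2/(n-2)}$.

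Next, I would promote $y_i$ to a nearby boundary concentration point. By the agreement $R_g>0$, the interior equation \eqref{main:equation:1} gives $\Delta_g u_i\geq 0$, so $u_i$ is subharmonic and has no interior local maxima. Combined with the uniform bound above and boundary Harnack-type estimates, one extracts $\tilde y_i\in\partial M$ with $d_g(\tilde y_i,y_i)=O(r_i)$ and $u_i(\tilde y_i)$ comparable to $L_i$. The alternative $d_g(y_i,\partial M)/r_i\to\infty$ would yield, after rescaling, a bounded positive solution on all of $\mathbb R^n$ of $\Delta v-n(n-2)\kappa v^{(n+2)/(n-2)}=0$, which does not exist (Liouville). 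Work in Fermi coordinates $\psi_i:B^+_\delta\to M$ centered at $\tilde y_i$ and define
$$
v_i(z)=u_i(\tilde y_i)^{-1}u_i\bigl(\psi_i(u_i(\tilde y_i)^{-2/(n-2)}z)\bigr),\qquad z\in B^+_{R_i}(0),
$$
with $R_i\to\infty$. Then $v_i(0)=1$, the sequence $\{v_i\}$ is locally uniformly bounded, and $v_i$ solves a rescaled version of \eqref{main:equation:1} whose metric coefficients converge to the Euclidean metric. Elliptic regularity up to the boundary gives, along a subsequence, $v_i\to v$ in $C^2_{\mathrm{loc}}(\overline{\mathbb R^n_+})$, where $v\geq 0$ satisfies \eqref{eq:U} with $v(0)=1$; the strong maximum principle forces $v>0$. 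By the Chipot--Fila--Shafrir classification \cite{chipot-fila-shafrir}, $v$ is of the form \eqref{form:U} for some $\epsilon>0$ and $\bar a\in\mathbb R^{n-1}$.

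The function $v$ attains its maximum on $\partial\mathbb R^n_+$ at a unique point $(\bar a,0)$. By $C^2$-convergence, for large $i$ there is a local maximum $\hat z_i$ of $v_i$ with $\hat z_i\to(\bar a,0)$, and hence a local maximum $x_0:=\psi_i(u_i(\tilde y_i)^{-2/(n-2)}\hat z_i)\in\partial M$ of $u_i$. Since $d_g(x_0,S_i)\geq \sigma_i/2>0$ for $i$ large, $x_0\notin S_i$. Re-centering the rescaling at $x_0$ with the parameter $u_i(x_0)^{-2/(n-2)}$ (which absorbs the factors $\epsilon$ and the small translation by $\hat z_i$), the same $C^2_{\mathrm{loc}}$ convergence gives, with $\lambda=1-\kappa$,
$$
\bigl|u_i(x_0)^{-1}u_i(\psi_{x_0}(z))-\lambda^{(n-2)/2}U_\kappa(\lambda u_i(x_0)^{2/(n-2)}z)\bigr|_{C^2(B^+_{2r_0}(0))}\to 0,\qquad r_0=Ru_i(x_0)^{-2/(n-2)},
$$
contradicting the failure hypothesis for large $i$.

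The main obstacle I anticipate is the passage from the weighted maximizer $y_i$ to an actual local maximum of $u_i$ on $\partial M$, and verifying that this selected point sits at a uniformly bounded rescaled distance from $y_i$ so that the $C^2$-convergence transfers to the approximation in \eqref{eq:approx:cpct}; this is where the subharmonicity provided by the normalization $R_g>0$ is essential, and where the Harnack-type control near the boundary has to be invoked carefully. The rest is standard rescaling plus the Liouville input from \cite{chipot-fila-shafrir}.
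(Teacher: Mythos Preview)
Your proposal is correct and follows essentially the same route as the paper's proof: contradiction, selection of a weighted maximizer, rescaling, ruling out interior concentration via the nonexistence of positive solutions to $\Delta v = n(n-2)\kappa v^{(n+2)/(n-2)}$ on $\mathbb{R}^n$ (the paper cites \cite{brezis} for this), convergence to a solution of \eqref{eq:U} on $\mathbb{R}^n_+$, classification by \cite{chipot-fila-shafrir}, and extraction of a boundary local maximum. The paper merely sketches the final steps by deferring to \cite[Lemma~1.1]{han-li1}; your explicit use of the subharmonicity of $u_i$ (from the agreement $R_g>0$ and $\kappa>0$) to force the local maximum onto $\partial M$ makes that passage transparent.
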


\begin{proof}
Suppose by contradiction there exist $R,\beta>0$ satisfying the following:
for all $C_0>0$ there exist $u>0$ solution of  \eqref{main:equation:1} and a compact set $S\subset M$ such that 
$$
\max_{x\in M\backslash S}\left(u(x)d_g(x,S)^{\frac{n-2}{2}}\right)\geq C_0
$$
holds and no such point $x_0$ exists. Hence, we can suppose that there are sequences $u_i>0$ solving \eqref{main:equation:1}, $x_i'\in M$, and $S_i\subset M$ compact sets such that 
$$
w_i(x_i')=\max_{x\in M\backslash S_i}w_i(x)\to\infty, \qquad\text{as}\:\:i\to\infty,
$$
where $w_i(x):=u_i(x)d_g(x,S_i)^{\frac{n-2}{2}}$. Observe that $u(x'_i)\to\infty$ as $i\to\infty$.

The rest of the proof is rather standard by now and we will only sketch it, pointing out the modifications. 
We use local coordinates centred at $x_i'$ and rescale the $u_i$ by $u(x'_i)$ to obtain a sequence of functions $\{v_i\}$ which is uniformly bounded in compact sets and satisfies $v_i(0)=1$. Assuming $\{v_i\}$ converges to a limit function $v$, we first rule out the possibility that 
$$
T_i:=u(x'_i)d_g(x_i',\partial M)^{\frac{n-2}{2}}\to\infty.
$$ 
Assuming by contradiction it happens, then $v\geq 0$ would satisfy the equation
$\Delta v-n(n-2)\kappa v^{\frac{n+2}{n-2}}=0$ on $\mathbb R^n$. This would imply $v\equiv 0$ (see \cite[Lemma 2]{brezis}) which contradicts $v(0)=1$.
So, we can assume that $T_i$ is convergent as $i\to\infty$. We proceed as in \cite[Lemma 1.1]{han-li1} to obtain a point $x_0\in\partial M$  such that \eqref{eq:approx:cpct} holds. This is a contradiction and proves the lemma.

\end{proof}

We now introduce the notion of an isolated simple blow-up point. If $x_i\to x_0$ is an isolated blow-up point for $\{u_i\}$,  for $0<r<\delta$, set 
$$\bar{u}_i(r)=\frac{2}{\sigma_{n-1}r^{n-1}}\int_{S^+_r(0)}(u_i\circ\psi_i)d\sigma_r
\:\:\:\:\text{and}\:\:\:\:w_i(r)=r^{\frac{n-2}{2}}\bar{u}_i(r)\,.$$ 
Note that the definition of $w_i$ is invariant under re-scaling. More precisely, if $v_i(y)=s^{\frac{n-2}{2}}u_i(\psi_i(sy))$, then
$r^{\frac{n-2}{2}}\bar{v}_i(r)=(sr)^{\frac{n-2}{2}}\bar{u}_i(sr)$.
\begin{definition}\label{def:simples}
	An isolated blow-up point $x_i\to x_0$ for $\{u_i\}$ is {\it{simple}} if there exists $\delta>0$ such that $w_i$ has exactly one critical point in the interval $(0,\delta)$.
\end{definition}
\begin{remark}\label{rk:def:equiv} Let $x_i\to x_0$ be an isolated blow-up point and $R_i\to\infty$. Using Proposition \ref{form:bolha} it is not difficult to see that, choosing a subsequence, 
$$
r\mapsto r^{\frac{n-2}{2}}\bar{u}_i(r)
$$ 
has exactly one critical point in the interval $(0,r_i)$, where $r_i=R_i u_i(x_i)^{-\frac{2}{n-2}} \to 0$. Moreover, its derivative is negative right after the critical point. Hence, if $x_i\to x_0$ is isolated simple then there exists $\delta>0$ such that $w_i'(r)<0$ for all $r\in [r_i,\delta)$. 
\end{remark}

A basic result for isolated simple blow-up points is stated as follows:
\begin{proposition}\label{estim:simples}
	Let $x_i\to x_0$ be an isolated simple blow-up point for $\{u_i\}$.  Then there exist $C,\delta>0$ such that
	\\\\
	(a) $u_i(x_i)u_i(\psi_i(z))\leq C|z|^{2-n}$\:\:\: for all $z\in B^+_{\delta}(0)\backslash\{0\}$;
	\\\\
	(b) $u_i(x_i)u_i(\psi_i(z))\geq C^{-1}G_i(z)$\:\:\: for all $z\in B^+_{\delta}(0)\backslash B^+_{r_i}(0)$, where $G_i$ is the Green's function so that:
	
	\begin{align}
	\begin{cases}
	L_{g_i}G_i=0, &\text{in}\; B_{\delta}^+(0)\backslash\{0\},\notag
	\\
	G_i=0, &\text{on}\; S_{\delta}^+(0),\notag
	\\
	B_{g_i}G_i=0, &\text{on}\;D_{\delta}(0)\backslash\{0\}\notag 
	\end{cases}
	\end{align}
	and $|z|^{n-2}G_i(z)\to 1$, as $|z|\to 0$. Here, $r_i$ is defined as in Remark \ref{rk:def:equiv}.
\end{proposition}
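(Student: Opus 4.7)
For (a), I would argue by contradiction. Suppose that, after passing to a subsequence, there exist $z_i \in B^+_{\delta}(0)\setminus\{0\}$ with $M_i |z_i|^{n-2} u_i(\psi_i(z_i)) \to \infty$, where $M_i := u_i(x_i)$. The isolated blow-up bound $u_i(\psi_i(z)) \leq C |z|^{-(n-2)/2}$ immediately forces $|z_i| \to 0$ and $|z_i|/r_i \to \infty$. Rescaling around $z_i$ via $\hat u_i(y) := u_i(\psi_i(z_i))^{-1} u_i(\psi_i(z_i + u_i(\psi_i(z_i))^{-2/(n-2)} y))$, the sequence $\hat u_i$ solves a rescaled version of \eqref{eq:blow-up} on domains of increasing size, satisfies $\hat u_i(0)=1$, and is controlled by the scaled isolated blow-up estimate. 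Extracting a limit yields a nontrivial non-negative solution of the limiting Euclidean equation, either on $\mathbb R^n$ or on $\mathbb R^n_+$, depending on whether $z_i$ lies deep in the interior or near $\partial \mathbb R^n_+$. The interior case is excluded by \cite[Lemma~2]{brezis}, while in the boundary case the classification of Section~\ref{subsec:model} delivers a bubble of the form \eqref{form:U}. Applying Proposition~\ref{form:bolha} and Lemma~\ref{compact:set:lemma} to this information produces a second isolated blow-up point for $\{u_i\}$ at distance comparable to $|z_i|$ from $x_i$. This secondary bubble contributes a local maximum to the spherical average $w_i(r) = r^{(n-2)/2}\bar u_i(r)$ in the interval $(r_i,\delta)$, and hence a second critical point of $w_i$ there, contradicting the isolated simple hypothesis via Remark~\ref{rk:def:equiv}.

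For (b), set $\eta_i := M_i (u_i \circ \psi_i)$, which by (a) satisfies $\eta_i(z) \leq C|z|^{2-n}$. A direct computation gives
$$
L_{g_i}\eta_i = n(n-2)\kappa\, M_i^{-4/(n-2)}\eta_i^{(n+2)/(n-2)}, \qquad B_{g_i}\eta_i = -(n-2) M_i^{-2/(n-2)}\eta_i^{n/(n-2)},
$$
so both right-hand sides tend to $0$ locally uniformly in $B^+_{\delta}\setminus\{0\}$ as $i \to \infty$. Standard elliptic theory, together with $g_i \to g_0$ in $C^2$, yields a subsequential $C^2_{\mathrm{loc}}$ limit $\eta$ on $B^+_{\delta}\setminus\{0\}$ solving $L_{g_0}\eta = 0$ in $B^+_{\delta}\setminus\{0\}$ and $B_{g_0}\eta = 0$ on $D_{\delta}\setminus\{0\}$. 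Using $U_\kappa(y) \sim |y|^{2-n}$ as $|y|\to\infty$ in the bubble profile of Proposition~\ref{form:bolha}, one obtains $\eta_i(z)|z|^{n-2} \to \lambda^{(2-n)/2}$ on scales $|z|\sim r_i$, which identifies the singularity of $\eta$ at the origin with that of $\lambda^{(2-n)/2} G_0$. Hence $\eta - \lambda^{(2-n)/2} G_0$ extends across $0$ to a bounded solution of the homogeneous linear system; since $\eta \geq 0$ is nontrivial, the strong maximum principle for the coercive operator $-L_{g_0}$ with the corresponding boundary conditions gives $\eta > 0$ in the interior, and a straightforward comparison then yields $\eta \geq (\lambda^{(2-n)/2}/2) G_0$ on $B^+_{\delta}\setminus\{0\}$. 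Transferring this back to the sequence via the $C^2_{\mathrm{loc}}$ convergence $\eta_i \to \eta$ and $G_i \to G_0$ on compact subsets of $B^+_{\delta}\setminus\{0\}$, combined with Lemma~\ref{Harnack} and Proposition~\ref{form:bolha} on the transition region near $S^+_{r_i}$ (where both $\eta_i$ and $G_i$ are comparable to $|z|^{2-n}$), one obtains $\eta_i \geq C^{-1} G_i$ uniformly on $B^+_{\delta}\setminus B^+_{r_i}$, which is (b).

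The main obstacle is the secondary blow-up step in (a). One must first verify that the rescaled sequence $\hat u_i$ actually concentrates at a new point with a nontrivial limit rather than merely diverging in a uniform way; then one must correctly handle the dichotomy between interior and boundary rescalings, invoking the appropriate Liouville-type result in each case; and finally one must convert the secondary concentration quantitatively into a local maximum of $w_i$ in $(r_i,\delta)$. This last translation requires a delicate analysis of $\bar u_i$ to separate the decaying contribution of the primary bubble centered at $x_i$ from the rising contribution of the secondary bubble near $z_i$, ensuring that their sum develops a genuine local maximum in the relevant range of $r$.
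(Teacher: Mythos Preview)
Your argument for (a) has a genuine gap at the very first step. From the isolated blow-up bound $u_i(\psi_i(z))\leq C|z|^{-(n-2)/2}$ you only get
\[
M_i\,|z_i|^{n-2}u_i(\psi_i(z_i))\leq C\,M_i\,|z_i|^{(n-2)/2},
\]
and since $M_i\to\infty$ this does \emph{not} force $|z_i|\to 0$. In fact, failure of (a) on a fixed sphere $S^+_\sigma$ simply means $M_iu_i(\psi_i(z_i))\to\infty$ with $|z_i|=\sigma$ bounded; there $u_i(\psi_i(z_i))\leq C\sigma^{-(n-2)/2}$ stays bounded, so there is no secondary blow-up to extract. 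Even when $|z_i|\to 0$, you have no lower bound $u_i(\psi_i(z_i))\geq c|z_i|^{-(n-2)/2}$ (only the upper bound), and without it your rescaled $\hat u_i$ need not be locally bounded; nor is $z_i$ a local maximum, which is what makes the Liouville/classification step go through. The final step, turning a hypothetical second bubble into a second critical point of $w_i$, is not ``delicate'' but genuinely unavailable here: the isolated-simple hypothesis is not used in the paper to rule out secondary bubbles, but to guarantee that a certain limit has a non-removable singularity.

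The paper's route is entirely different. One first proves a \emph{weak} decay $M_i^{\,1-\rho/(n-2)}u_i(z)\leq C|z|^{2-n+\rho}$ on $B^+_\delta\setminus B^+_{r_i}$ by a barrier/maximum-principle argument (comparison functions of the form $|z|^{-\nu}-\epsilon_0|z|^{-\nu-1}z_n$, tested against the operators $L_i=L_{g_i}-n(n-2)\kappa u_i^{4/(n-2)}$ and $B_i$). This weak estimate already gives $\int_{D_\delta}u_i^{n/(n-2)}\leq CM_i^{-1}$. Then, fixing $\sigma$ and normalizing by $u_i(x_\sigma)$, one passes to a limit $w$ solving the homogeneous linear problem on $B^+_\sigma\setminus\{0\}$; the isolated-simple hypothesis is used exactly once, to show that the singularity of $w$ at $0$ is non-removable (so $w=aG+b$ with $a>0$). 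Integrating the first equation of the rescaled system over $B^+_\sigma$ and using the sign of $\int_{S^+_\sigma}\partial_r w$ together with the boundary integral bound yields $M_iu_i(x_\sigma)\leq C_\sigma$, i.e.\ the sharp bound on each fixed sphere. A final contradiction argument then gives (a) on all of $B^+_\delta$.

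For (b) your limit-and-compare argument is in the right spirit but more elaborate than needed; the paper obtains (b) directly from the maximum principle in \cite[Lemma~A.2]{han-li1}, comparing $M_iu_i$ with a multiple of $G_i$ on $B^+_\delta\setminus B^+_{r_i}$, which avoids the subsequence extraction and the matching near $S^+_{r_i}$.
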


\bp
The proof of Proposition \ref{estim:simples} follows from standard arguments and we only sketch it here without providing all details. See for example the proof of Proposition 1.4 in \cite{han-li1} or Proposition 4.3 in \cite{almaraz3} for references. Let us use Fermi coordinates $\psi_i$ centred at $x_i$ and work with $B_{\delta}^+\subset \R^n_+$ omitting the symbol $\psi_i$ for simplicity. It follows directly from Proposition \ref{form:bolha} that there are large $C>0$ and small $\rho>0$ such that 
\begin{equation}\label{estim:rhp}
	u_i(x_i)^{1-\frac{\rho}{n-2}}u_i(z)\leq C|z|^{2-n+\rho},
\end{equation}
for any $z\in B^+_{r_i}$, where $r_i$ is defined as in Remark \ref{rk:def:equiv}. 
We would like to extend \eqref{estim:rhp} to $B_{\delta}^+ \backslash B_{r_i}^+$ for some $\delta>0$. Arguing as in \cite[Lemma 2.2]{han-li1}, we have to construct appropriate comparison functions $\Psi_i$ in order to apply the maximum principle in \cite[Lemma A.2]{han-li1}. If we set $$
L_i=L_{g_i}-n(n-2)\kappa u_i^{\frac{4}{n-2}}\qquad \text{and}\qquad B_i=B_{g_i}+(n-2)u_i^{\frac{2}{n-2}},
$$
then $u_i$ satisfies 
	\begin{align*}
	\begin{cases}
		L_{i}u_i=0, &\text{in}\; B_{\delta}^+\backslash B_{r_i}^+,\notag
		\\
		B_{i}u_i=0, &\text{on}\;D_{\delta}\backslash D_{r_i}.\notag 
	\end{cases}
\end{align*}
The non-negative test functions in our situation are chosen to be $$\Psi_i(z)=C\(u_i(x_i)^{\frac{\rho}{n-2}-1}\Phi_{i,n-2-\rho}(z)+\eta_i\Phi_{i,\rho}(z)\),$$ where $\Phi_{i,\nu}(z)=|z|^{-\nu}-\epsilon_0|z|^{-\nu-1}z_n$ for some $\epsilon_0>0$ and $\eta_i=\max_{S_{\delta}^+}u_i$. Here, $C>0$ is chosen such that $u_i\leq \Psi_i$ on $S_{\delta}^+\cup S_{r_i}^+$. By choosing $\epsilon_0$ small, direct calculations give 
	\begin{align*}
	\begin{cases}
		L_{i}(\Psi_i-u_i)\leq 0, &\text{in}\; B_{\delta}^+\backslash B_{r_i}^+,\notag
		\\
		B_{i}(\Psi_i-u_i)\leq0, &\text{on}\;D_{\delta}\backslash D_{r_i}.\notag 
	\end{cases}
\end{align*}
It then follows from Lemma A.2 in \cite{han-li1} that $u_i\leq \Psi_i$ in $B_{\delta}^+\backslash B_{r_i}^+$. This extends \eqref{estim:rhp} to $B_{\delta}^+\backslash B_{r_i}^+$ for $\delta>0$. 

Now we proceed as in \cite[Proposition 4.3]{almaraz3}. A change of scale using Proposition \ref{form:bolha} for the region $D_{r_i}$ and \eqref{estim:rhp} for the region $D_{\delta}\backslash D_{r_i}$ shows that
\begin{equation}\label{claim1}
\int_{D_{\delta}}u_i^{\frac{n}{n-2}}\leq CM_i^{-1}.
\end{equation} 
Here and in the rest of the proof, we are omitting the volume and area elements.

We claim that there exists $\sigma_1>0$ such that for all $0<\sigma<\sigma_1$ there exists $C'_\sigma$ satisfying
\begin{equation}\label{claim2}
u_i(x_i)u_i(z)\leq C'_\sigma
\end{equation}
for all $z\in S^+_{\sigma}$. Fix $\sigma\in (0,\sigma_1)$ and choose any $x_{\sigma}\in S^+_{\sigma}$. If we set $w_i=u_i(x_\sigma)^{-1}u_i$, then $w_i$ satisfies
\begin{equation}\label{eq:wi}
\begin{cases}
\Delta_{g_i} w_i-n(n-2)\kappa u_i(x_\sigma)^{\frac{4}{n-2}}w_i^{\frac{n+2}{n-2}}=0, &\text{in}\;B_{\sigma}^+,
\\
\partial_n w_i+(n-2)u_i(x_\sigma)^{\frac{2}{n-2}}w_i^{\frac{n}{n-2}}=0,&\text{on}\;D_{\sigma}.
\end{cases}
\end{equation}
By the Harnack inequality in \cite[Lemma A.1]{han-li1}, for each $\beta>0$ there exists $C_\beta$ such that 
$$
C_\beta^{-1}\leq w_i(z)\leq C_\beta
$$
if $|z|>\beta$. Observe that \eqref{estim:rhp} implies that $u_i(x_\sigma)\to 0$ as $i\to\infty$. Hence, we can suppose that $w_i\to w$ in $C_{loc}^2(B_\sigma\backslash \{0\})$, for some $w>0$ satisfying
\begin{equation*}
\begin{cases}
\Delta_{g_0} w=0, &\text{in}\;B_{\sigma}^+\backslash\{0\},
\\
\partial_n w=0,&\text{on}\;D_{\sigma}\backslash\{0\}.
\end{cases}
\end{equation*}
It follows from elliptic linear theory that $w(z)=aG(z)+b(z)$ for $z\in B^+_\sigma\backslash\{0\}$, where $a\geq 0$. Here, $G$ is the Green's function so that  
\begin{equation*}
\begin{cases}
\Delta_{g_0} G=0, &\text{in}\;B_{\sigma}^+\backslash\{0\},
\\
\partial_n G=0,&\text{on}\;D_{\sigma}\backslash\{0\},
\\
G=0,&\text{on}\;S^+_{\sigma},
\\
\lim_{|z|\to 0}|z|^{n-2}G(z)=1,
\end{cases}
\end{equation*}
and $b$ satisfies 
\begin{equation*}
\begin{cases}
\Delta_{g_0} b=0, &\text{in}\;B_{\sigma}^+,
\\
\partial_n b=0,&\text{on}\;D_{\sigma}.
\end{cases}
\end{equation*}
Using the definition of isolated simple blow-up, one can prove that the origin is a non-removable singularity for $w$, so that $a>0$. Thus, there exists $c_1>0$ such that 
$$
-\int_{S^+_\sigma}\frac{\partial w}{\partial r}>c_1.
$$
Here and in what follows, we are omitting the volume and area elements.
Integrating by parts the first equation of \eqref{eq:wi}, we obtain
\begin{align*}
n(n-2)\kappa u_i(x_\sigma)^{-1}\int_{B_\sigma^+}u_i^{\frac{n+2}{n-2}}
&=\int_{B_\sigma^+}\Delta_{g_i}w_i
=\int_{S_\sigma^+}\frac{\partial w_i}{\partial r}-\int_{D_\sigma}\partial_n w_i
\\
&=\int_{S_\sigma^+}\left(\frac{\partial w}{\partial r}+o_i(1)\right)+(n-2)u_i(x_\sigma)^{-1}\int_{D_\sigma}u_i^{\frac{n}{n-2}}
\\
&\leq -\frac{c_1}{2}+(n-2)u_i(x_\sigma)^{-1}\int_{D_\sigma}u_i^{\frac{n}{n-2}}.
\end{align*}
As the left-hand side above is non-negative, estimate \eqref{claim2} follows from inequality \eqref{claim1}.

Once we have proved  \eqref{claim2}, item (a) follows from a contradiction argument.
Item (b) is an application of the maximum principle in \cite[Lemma A.2]{han-li1}.
\ep


\section{Blow-up estimates in dimension three}\label{sec:blowup:estim}
In this section, we give a point-wise estimate for a blow-up sequence in a neighbourhood of an isolated simple blow-up point. 

\vspace{0.2cm}
\noindent
{\bf{Agreement.}}
Throughout this section, we assume that $n=3$ as refined blow-up estimates in Proposition~\ref{estim:blowup:compl} are not required in the case of locally conformally flat manifolds with umbilic boundary.
Let  $x_i\to x_0$ be an isolated simple blow-up point for the sequence $\{u_i\}$ of positive solutions to the equations \eqref{eq:blow-up} with $h_{g_i}=0$ and let $\psi_i: B^+_{\delta'}\to M$ denote Fermi coordinates centred at $x_i$. Set $\ei=u_i(x_i)^{-2}$, which goes to zero as $i\to\infty$. Recall that we are assuming that $0<\kappa<1$ and $g_i\to g_0$. 
\vspace{0.2cm}

Let $r\mapsto 0\leq\chi(r)\leq 1$ be a smooth cut-off function such that $\chi(r)\equiv 1$ for $0\leq r\leq \delta'$ and $\chi(r)\equiv 0$ for $r>2\delta'$. 
We set $\chi_{\e}(r)=\chi(\e r)$.
Thus,  $\chi_{\e}(r)\equiv 1$ for $0\leq r\leq \delta'\e^{-1}$ and $\chi_{\e}(r)\equiv 0$ for $r>2\delta'\e^{-1}$.

The next proposition, although stated for dimension three, also holds for any $n\geq 3$ with some obvious modifications.

\begin{proposition}\label{Linearized} For every $i$ there is a solution $\phi_i$ of 
	\begin{align}
		\begin{cases}\label{linear:8}
			\Delta\phi_{i}(y)-15\kappa U_{\kappa}^{4}\phi_{i}(y)=-2\chi_{\ei}(|y|)\lambda^{-1}\ei y_3 \pi_{jl}(x_i)\displaystyle\frac{\d^2 U_{\kappa}(y)}{\d y_j\d y_l}\,,&\text{for}\:y\in\R^3_+\,,
			\\
			\displaystyle\frac{\d\phi}{\d y_3} (\bar y)+3 U_{\kappa}^{2}\phi_{i}(\bar{y})=0\,,&\text{for}\:\bar{y}\in\d\R^3_+\,,
		\end{cases}
	\end{align} 
	where $\Delta$ stands for the Euclidean Laplacian and $\lambda=1-\kappa$, satisfying 
	\begin{equation}\label{estim:phi'}
		|\nabla^s\phi_i|(y)\leq C\ei |\pi_{jl}( x_i)|(1+|y|)^{-s}\,,\:\:\:\:\text{for}\: y\in\R^3_+\,,\,s=0,1 \:\text{or}\:2,
	\end{equation} 
	and
	\begin{equation}\label{hip:phi}
		\phi_i(0)=\frac{\d\phi_i}{\d y_1}(0)=\frac{\d\phi_{i}}{\d y_{2}}(0)=0.
	\end{equation}
\end{proposition}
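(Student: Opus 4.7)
The plan is to construct $\phi_i$ by applying the Fredholm alternative to the linearized operator $\mathcal{L} := \Delta - 15\kappa U_\kappa^4$ equipped with the conformal boundary operator $\mathcal{B} := \partial_{y_3} + 3 U_\kappa^2$ on $\R^3_+$, working in a suitable weighted H\"older space (for instance, functions $\phi$ with $(1+|y|)^{\nu} \phi \in C^{2,\alpha}$ for some small $\nu > 0$). By Lemma \ref{classifLinear}, the kernel of this operator restricted to polynomially decaying functions is the three-dimensional space $\Span\{J_1, J_2, J_3\}$; hence a solution to \eqref{linear:8} exists provided its right-hand side $F_i(y) := -2\chi_{\ei}(|y|)\lambda^{-1}\ei y_3 \pi_{jl}(x_i) \partial_j\partial_l U_\kappa$ (summed on $1 \le j,l \le 2$) is $L^2$-orthogonal to each $J_k$. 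Given such a solution $\widetilde\phi_i$, I would then add a kernel element $c_1 J_1 + c_2 J_2 + c_3 J_3$ to enforce the normalization \eqref{hip:phi}.

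The main obstacle, and the step requiring the most care, is verifying $L^2$-orthogonality of $F_i$ against $J_3$. Orthogonality against $J_1$ and $J_2$ is immediate by parity: for every $(j,l) \in \{1,2\}^2$, the factor $y_3 \partial_j\partial_l U_\kappa$ has definite parity in $y_1$ and in $y_2$ (with $\partial_{11}U_\kappa$ and $\partial_{22}U_\kappa$ even in both, and $\partial_{12}U_\kappa$ odd in both), and pairing against $J_1 = \partial_1 U_\kappa$ (odd in $y_1$, even in $y_2$) or $J_2 = \partial_2 U_\kappa$ annihilates every term. For $J_3 = \tfrac{1}{2} U_\kappa + y^b\partial_b U_\kappa$, which is even in both $y_1$ and $y_2$, the off-diagonal $\pi_{12}$ contribution vanishes by parity, while the diagonal contribution reduces, via the symmetry $y_1 \leftrightarrow y_2$ (which fixes $J_3$ and $y_3$ and swaps $\partial_{11}U_\kappa$ with $\partial_{22}U_\kappa$), to a multiple of $\pi_{11}(x_i) + \pi_{22}(x_i) = 2h_{g_i}(x_i) = 0$ by the Agreement $h_{g_i} \equiv 0$.

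With orthogonality in place, the Fredholm alternative yields $\widetilde\phi_i$ whose weighted norm is controlled by that of $F_i$. Since $y_3 \partial_j\partial_l U_\kappa$ decays like $|y|^{-2}$ in $\R^3_+$, we have $|F_i(y)| \le C\ei|\pi(x_i)|(1+|y|)^{-2}$, and a standard bootstrap of interior and boundary Schauder estimates --- combined if needed with a Kelvin transform to convert decay at infinity into regularity near the origin --- upgrades the weighted bound to the pointwise decay \eqref{estim:phi'} for $s = 0, 1, 2$, with constants proportional to $\ei|\pi(x_i)|$. Finally, the normalization \eqref{hip:phi} is arranged by direct linear algebra: a short computation gives $J_1(0) = J_2(0) = 0$, $J_3(0) = \tfrac{1}{2}\lambda^{-1/2}$, $\partial_1 J_1(0) = \partial_2 J_2(0) = -\lambda^{-3/2}$, while all remaining first-derivative evaluations at the origin vanish by symmetry; the $3 \times 3$ system for $(c_1, c_2, c_3)$ is therefore triangular with nonzero diagonal and uniquely solvable with $|c_k| = O(\ei|\pi(x_i)|)$, preserving \eqref{estim:phi'}.
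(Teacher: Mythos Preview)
Your strategy---Fredholm alternative modulo the kernel $\Span\{J_1,J_2,J_3\}$, orthogonality via parity together with the trace-free condition $\pi_{11}+\pi_{22}=2h_{g_i}=0$, then adjustment by a kernel element to enforce \eqref{hip:phi}---is exactly the skeleton of the paper's proof, and your symmetry computation and the linear algebra for the normalization are both correct and more explicit than what the paper writes.

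The genuine difference is in how the Fredholm step and the decay estimates are made rigorous. The paper does not work directly on the noncompact half-space in a weighted H\"older space; instead it pulls the problem through the conformal equivalence $F_\kappa:\R^3_+\to D_{t_0}$ to a geodesic ball in the hyperboloid model of $\mathbb{H}^3_\kappa$, where the linearized equations become $\Delta_{\mathcal H_\kappa}\bar\phi-12\kappa\bar\phi=f_i$ in $D_{t_0}$ with the Steklov-type condition $\partial_\nu\bar\phi+2\bar\phi=0$ on $\partial D_{t_0}$. On this compact domain the Fredholm alternative is immediate, the kernel is identified as the coordinate functions $z_1,z_2,z_3$ by Lemma~\ref{lemma:eigen:loid}, the orthogonality $\int_{D_{t_0}}z_af_i=0$ is the transformed version of your parity argument, and the pointwise bounds \eqref{estim:phi'} come from a Green's representation on $D_{t_0}$ rather than from Schauder theory in weighted spaces. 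Your ``Kelvin transform'' aside is in fact precisely this step: $F_\kappa$ is the Kelvin-type inversion adapted to the potential $15\kappa U_\kappa^4$. What you gain by naming weighted H\"older spaces is conceptual directness; what the paper gains by compactifying is that the index-zero Fredholm property, the solvability criterion, and the Green's function all come for free, so no separate analysis of the operator at infinity is required.
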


\bp
The case $\kappa=0$ is Proposition 5.1 of \cite{almaraz3}. 
The proof for $0<\kappa<1$ follows the same lines as of that proposition with some minor modifications and will be sketched here. It only requires $\{x_i\}$ to be a sequence of points on $\partial M$, not necessarily a blow up sequence, 
and it makes use of the fact that $h_g(x_i)=0$ for every $i$.
It strongly relies on the conformal equivalence $F_{\kappa}:\mathbb R^3_+\to D_{t_0}$ defined in Section \ref{sec:pre}.

Define, for each $i$,
$$
f_i(F_{\kappa}(y))=-2\chi_{\ei}(|y|)\lambda^{-1}\ei y_3 \pi_{jl}( x_i)\displaystyle\frac{\d^2 U_{\kappa}(y)}{\d y_j\d y_l}U_{\kappa}^{-5}(y),\:\:y\in\R^3_+.
$$
According to Lemma \ref{classifLinear}, for each $a=1,2,3$ there exist constants $c_b$, $b=1,2,3$, such that 
$$
z_a\circ F_{\kappa}=\sum_{b=1}^{3}c_bU_{\kappa}^{-1}J_b,
$$
where the $J_b$ are as in Subsection \ref{subsec:linearized}. By symmetry arguments,
$$
\int_{D_{t_0}}z_1f_i=\int_{D_{t_0}}z_2f_i=\int_{D_{t_0}}z_3f_i=0,
$$
since ${\text{tr}}(\pi_{jl}(x_i))=2h_{g_i}(x_i)=0.$
As the coordinate functions $z_1,z_2$ and $z_3$ generate the space of solutions of \eqref{eigen:loid}, in particular for each $i$ one can find a smooth solution $\overline\phi_{\epsilon_i}$ of  
\begin{equation*}
	\begin{cases}
		\Delta_{\mathcal H_\kappa} \overline{\phi}_{\epsilon_i} -12\kappa\overline \phi_{\epsilon_i}= f_i\,,&\text{in}\:D_{t_0}\,,\\
		\displaystyle\frac{\partial \overline{\phi}_{\epsilon_i}}{\partial \nu}+2\overline{\phi}_{\epsilon_i}=0\,,&\text{on}\:\partial D_{t_0}\,,
	\end{cases}
\end{equation*}
which is $L^2(D_{t_0})$-orthogonal to $z_1, z_2$ and $z_3$. Here, as in Section \ref{sec:pre}, $\nu$ stands for the unit co-normal vector to $\partial D_{t_0}$ pointing inwards. We then consider the Green's function $G(z,w)$ on $D_{t_0}$ such that
\begin{equation*}
	\begin{cases}
		\Delta_{\mathcal H_\kappa} G -12\kappa G= \alpha(z_1w_1+z_2w_2+z_3w_3)\,,&\text{in}\:D_{t_0}\,,\\
		\displaystyle\frac{\partial G}{\partial \nu}+2G=0\,,&\text{on}\:\partial D_{t_0}\,,
	\end{cases}
\end{equation*}
where $\alpha=\|z_1\|_{L^2(D_{t_0})}^{-2}=\|z_2\|_{L^2(D_{t_0})}^{-2}=\|z_3\|_{L^2(D_{t_0})}^{-2}$.

Observe that $\overline\phi_i=U_\kappa\,\overline\phi_{\epsilon_i}\circ F_{\kappa}$ satisfies the required equations \eqref{linear:8} and a Green's representation formula argument gives the required estimates \eqref{estim:phi'}. Finally we choose
$$
\phi_i=\overline\phi_i+c_{1,i}J_1+c_{2,i}J_2+c_{3,i}J_3.
$$
Clearly, the coefficients $c_{1,i}, c_{2,i}$ and $c_{3,i}$ can be chosen so that the equations \eqref{hip:phi} hold, and $\phi_i$ satisfies \eqref{linear:8} because $J_1$, $J_2$ and $J_3$ are solutions to \eqref{linear:homog}. 
As $|c_{a,i}|\leq C\epsilon_i|\pi_{jl}(x_i)|$, for $a=1,2,3$, it is easy to see that each $\phi_i$ satisfies  \eqref{estim:phi'}.\ep

Now we are ready to obtain some refined estimates at isolated simple blow-up points. 
Define $v_i(y)=\ei^{1/2}u_i(\psi_i(\ei y))$ for $y\in \Beilinha$. We know that $v_i$ satisfies 
\begin{align}\label{eq:vi'}
	\begin{cases}
		L_{\hat{g}_i}v_i-3\kappa v_i^{5}=0,&\text{in}\:\Beilinha,
		\\
		B_{\hat{g}_i}v_i+v_i^{3}=0,&\text{on}\:\Deilinha,
	\end{cases}
\end{align}
where $\hat{g}_i$ is the metric with coefficients $(\hat{g}_i)_{jl}(y)=(g_i)_{jl}(\psi_i(\ei y))$. The following proposition strongly relies on the assumption $n=3$:

\begin{proposition}\label{estim:blowup:compl}
There exist $\delta, C>0$ such that, for $|y|\leq\delta\ei^{-1}$,
	\begin{equation}\label{eq:ei}
		|\nabla^s(v_i-\widetilde{U}_{\kappa}-\widetilde{\phi}_i)|(y)\leq C\ei (1+|y|)^{-s}\,,
	\end{equation}
for $s=0,1,2$, where $\widetilde{U}_{\kappa}=\lambda^{1/2}U_{\kappa}(\lambda y)$, $\widetilde{\phi}_{i}=\lambda^{1/2}\phi_{i}(\lambda y)$ and $\lambda=1-\kappa$.
\end{proposition}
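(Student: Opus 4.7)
The plan is to set $w_i := v_i - \widetilde U_\kappa - \widetilde\phi_i$ and show first that $|w_i|\le C\epsilon_i$ on $\{|y|\le \delta\epsilon_i^{-1}\}$, then upgrade to the weighted decay estimates for $s=1,2$ by scaling. Using the Fermi expansion \eqref{exp:g} for the rescaled metric, $(\hat g_i)^{jl}(y)=\delta_{jl}+2\epsilon_iy_3\pi_{jl}(x_i)+O(\epsilon_i^2|y|^2)$, together with $h_{g_i}(x_i)=0$ and $R_{\hat g_i}=O(\epsilon_i^2)$, and recalling that $\widetilde U_\kappa$ satisfies the flat-metric equation \eqref{eq:U}, a direct expansion of $L_{\hat g_i}\widetilde U_\kappa$ produces a leading error $2\epsilon_iy_3\pi_{jl}(x_i)\partial^2_{jl}\widetilde U_\kappa$, which is precisely the forcing term absorbed by $\widetilde\phi_i$ via the rescaling of \eqref{linear:8}. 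Writing out the equation for $w_i$ and using the boundary condition $h_{g_i}\equiv 0$, one obtains
\begin{align*}
\Delta w_i - 15\kappa\widetilde U_\kappa^4 w_i &= F_i \quad\text{in}\; \Bei,\\
\partial_{y_3} w_i + 3\widetilde U_\kappa^2 w_i &= G_i \quad\text{on}\; \Dei,
\end{align*}
where $F_i$ and $G_i$ collect quadratic nonlinearities in $w_i+\widetilde\phi_i$ and the higher-order metric corrections. Choosing $\delta<\delta'$ so that the cutoff $\chi_{\epsilon_i}$ is identically one on the region of interest, one checks that $|F_i(y)|\le C\epsilon_i^2(1+|y|)^{-3}$ and $|G_i(y)|\le C\epsilon_i^2(1+|y|)^{-2}$, so that the leading $O(\epsilon_i)$ obstruction has been eliminated.

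Next I would run a contradiction argument. Suppose $M_i:=\epsilon_i^{-1}\sup_{|y|\le\delta\epsilon_i^{-1}}|w_i(y)|\to\infty$, pick $y_i$ realizing the supremum, and set $W_i := w_i/(\epsilon_i M_i)$, so that $|W_i|\le 1$ and $|W_i(y_i)|=1$. The equation for $W_i$ has right-hand side of order $M_i^{-1}\epsilon_i$, hence vanishing. If $\{|y_i|\}$ stays bounded, Proposition \ref{estim:simples}(a) gives the uniform pointwise bound $v_i(y)\le C(1+|y|)^{-1}$, allowing Schauder estimates and a subsequential $C^2_{\text{loc}}(\R^3_+)$ limit $W_\infty$ that solves the homogeneous linearized system \eqref{linear:homog} and is bounded. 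By Lemma \ref{classifLinear}, $W_\infty\in\Span\{J_1,J_2,J_3\}$. The normalizations $v_i(0)=\widetilde U_\kappa(0)$ and $\nabla_{\bar y}v_i(0)=0$ (since $0$ is a local maximum, $\widetilde U_\kappa$ is even in $\bar y$, and $\widetilde\phi_i$ satisfies \eqref{hip:phi}) give
$$
W_\infty(0)=\frac{\partial W_\infty}{\partial y_1}(0)=\frac{\partial W_\infty}{\partial y_2}(0)=0,
$$
and a direct computation shows that these three linear conditions on the three-dimensional space $\Span\{J_1,J_2,J_3\}$ force $W_\infty\equiv 0$, contradicting $|W_i(y_i)|=1$. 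If $|y_i|\to\infty$, one rescales $W_i$ around $y_i$ and uses Proposition \ref{estim:simples}(b) together with the Harnack inequality (Lemma \ref{Harnack}) to obtain the corresponding contradiction against the decay of $v_i$.

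Having established $|w_i|\le C\epsilon_i$ uniformly, the derivative bounds for $s=1,2$ follow by applying interior and boundary Schauder estimates to the equation for $w_i$ on scaled half-balls $B^+_{|y|/2}(y)$, using the decay of the coefficient $\widetilde U_\kappa$ and of the inhomogeneities $F_i$, $G_i$, which by construction all improve appropriately with $|y|$. The main obstacle I expect is paragraph one: verifying the $O(\epsilon_i)$ cancellation and checking that the remainders $F_i$ and $G_i$ genuinely decay at the stated rates. This is where the hypothesis $n=3$ enters decisively, since in higher dimensions new contributions at order $\epsilon_i^{1+\alpha}$ (from further terms in the expansion of the metric, the Weyl tensor, and the scalar curvature) would arise and demand additional correction terms beyond $\widetilde\phi_i$; in dimension three these extra contributions are absent or already accounted for, so the single correction term suffices.
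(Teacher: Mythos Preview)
Your overall scheme---subtract the bubble plus the correction, normalize by the putative maximum, pass to a limit, and use the Liouville-type Lemma~\ref{classifLinear} to kill the limit---is the same as in the paper. However, there are two genuine gaps in the way you carry it out.

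\textbf{(i) Lemma~\ref{classifLinear} needs decay, not boundedness.} You write that the $C^2_{\mathrm{loc}}$ limit $W_\infty$ is bounded and then invoke Lemma~\ref{classifLinear}. But that lemma requires $W_\infty(y)=O((1+|y|)^{-\alpha})$ for some $\alpha>0$; mere boundedness is not enough. Indeed, the proof of Lemma~\ref{classifLinear} passes via the conformal equivalence $F_\kappa$ to the hyperbolic ball, and removability of the pole singularity uses that $U_\kappa^{-1}\Psi$ stays bounded there---which fails if $\Psi$ is only bounded, since $U_\kappa^{-1}\sim|y|$ in dimension three. The paper fills this gap by inserting a Green's-function representation step: writing the normalized difference $w_i$ against the Green's function of $L_{\hat g_i}$ on $B^+_{\delta\epsilon_i^{-1}}$ and estimating each integral, one obtains the uniform bound
\[
|w_i(y)|\le C\big((1+|y|)^{-1}+\Lambda_i^{-1}\epsilon_i\big),\qquad |y|\le \tfrac{\delta}{2}\epsilon_i^{-1},
\]
which in the limit gives $W_\infty(y)=O((1+|y|)^{-1})$ and makes Lemma~\ref{classifLinear} applicable.

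\textbf{(ii) The case $|y_i|\to\infty$.} Your plan to ``rescale $W_i$ around $y_i$'' and contradict the decay of $v_i$ is too vague and, as stated, does not work: after such a rescaling you would lose the linear structure and any usable limiting problem. In the paper this case is \emph{not} treated by a separate rescaling; instead, the same Green's-function estimate above does double duty. Once $W_\infty\equiv 0$ is known, $|w_i(y_i)|=1$ forces $|y_i|\to\infty$, and then the displayed inequality with $\Lambda_i^{-1}\epsilon_i\to 0$ gives $1=|w_i(y_i)|\le C(1+|y_i|)^{-1}+o(1)\to 0$, the desired contradiction.

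A smaller point: your claimed rates $|F_i|\le C\epsilon_i^2(1+|y|)^{-3}$ and $|G_i|\le C\epsilon_i^2(1+|y|)^{-2}$ are too optimistic. The $O(\epsilon_i^2|y|^2)$ metric error acting on $\partial^2\widetilde U_\kappa$ and the scalar-curvature term produce interior contributions of order $\epsilon_i^2(1+|y|)^{-1}$, and the quadratic boundary term $(\widetilde U_\kappa+\widetilde\phi_i)^3-\widetilde U_\kappa^3-3\widetilde U_\kappa^2\widetilde\phi_i=O(\widetilde U_\kappa\widetilde\phi_i^2)$ is only $O(\epsilon_i^2(1+|y|)^{-1})$. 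Also, if you place the nonlinear remainder $v_i^5-\widetilde U_\kappa^5-5\widetilde U_\kappa^4(v_i-\widetilde U_\kappa)$ inside $F_i$, you cannot claim an $\epsilon_i^2$ bound for it before you know $w_i$ is small. The cleaner route (used in the paper) is to keep a secant coefficient $b_i$ in the equation, $L_{\hat g_i}w_i+b_iw_i=Q_i$, noting $|b_i|\le C(1+|y|)^{-4}$ from $v_i\le C\widetilde U_\kappa$; then $|b_i||w_i|\le C(1+|y|)^{-4}$ after normalization, which is exactly what feeds into the Green's-function estimate.
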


\begin{proof}
We will prove for $s=0$. The estimates with $s=1,2$ follow from elliptic theory. It follows from Proposition~\ref{Linearized} and scaling properties that $\tilde{\phi}_i$ satisfies 
\begin{align}
		\begin{cases}\label{eq:phitilde}
			\Delta\widetilde\phi_{i}(y)-15\kappa \widetilde U_{\kappa}^{4}\widetilde\phi_{i}(y)=-2\chi_{\ei}(|y|)\ei y_3 \pi_{jl}(x_i)\displaystyle\frac{\d^2 \widetilde U_{\kappa}(y)}{\d y_j\d y_l}\,,&\text{for}\:y\in\R^3_+\,,
			\\
			\displaystyle\frac{\d\widetilde\phi}{\d y_3} (\bar y)+3 \widetilde U_{\kappa}^{2}\widetilde\phi_{i}(\bar{y})=0\,,&\text{for}\:\bar{y}\in\d\R^3_+\,,
		\end{cases}
	\end{align} 
and
\begin{equation}\label{hip:phitilde}
	\widetilde\phi_i(0)=\frac{\d\widetilde\phi_i}{\d y_1}(0)=\frac{\d\widetilde\phi_{i}}{\d y_{2}}(0)=0.
\end{equation}
Moreover, the estimates \eqref{estim:phi'} also hold for $\widetilde\phi_i$.

We consider $\delta<\delta'$ to be chosen later and set 
	$$\Lambda_i=\max_{|y|\leq \delta\ei^{-1}} |v_i-\widetilde U_{\kappa}-\widetilde{\phi}_i|(y)=|v_i-\widetilde U_{\kappa}-\widetilde{\phi}_i|(y_i)\,,$$ 
	for some $|y_i|\leq \delta\ei^{-1}$. From Propositions \ref{form:bolha} and \ref{estim:simples},
	we know that $v_i(y)\leq C\widetilde U_{\kappa}(y)$ for $|y|\leq \delta\ei^{-1}$. Hence, if there exists $c>0$ such that $|y_i|\geq c\ei^{-1}$, then
	$$|v_i-\widetilde U_{\kappa}-\widetilde{\phi}_i|(y_i)\leq C\,|y_i|^{-1}\leq C\,\ei.$$
	This already implies the inequality \eqref{eq:ei} for $|y|\leq \delta\ei^{-1}$. Hence, we can suppose that $|y_i|\leq \delta\ei^{-1}/2$. 
	
	Suppose, by contradiction, the result is false. 
	Then, choosing a subsequence if necessary, we can suppose that
	\begin{equation}
		\label{hipLambda}
		\lim_{i\to\infty}\Lambda_i^{-1}\ei=0\,.
	\end{equation}
	Define
	$$w_i(y)=\Lambda_i^{-1}(v_i-\widetilde U_{\kappa}-\widetilde \phi_i)(y)\,,\:\:\:\:\text{for}\:\: |y|\leq \delta\ei^{-1}\,.$$
	By the equations \eqref{eq:U} and \eqref{eq:vi'}, $w_i$ satisfies
	\begin{equation}\label{wi}
		\begin{cases}
			L_{\hat{g}_i}w_i+b_i w_i=Q_i\,,&\text{in}\:\Bei\,,\\
			B_{\hat{g}_i}w_i+\bar b_i w_i=\overline{Q}_i\,,&\text{on}\:\Dei\,,
		\end{cases}
	\end{equation}
	where 
	$$b_i=-3\kappa\frac{v_i^{5}-(\widetilde U_{\kappa}+\widetilde \phi_i)^{5}}{v_i-(\widetilde U_{\kappa}+\widetilde \phi_i)},
	\qquad\bar b_i=\frac{v_i^{3}-(\widetilde U_{\kappa}+\widetilde \phi_i)^{3}}{v_i-(\widetilde U_{\kappa}+\widetilde \phi_i)},$$
	$$Q_i=\Lambda_i^{-1}\big\{3\kappa (\widetilde U_{\kappa}+\widetilde \phi_i)^5-3\kappa \widetilde U_{\kappa}^5-15\kappa \widetilde U_{\kappa}^4\widetilde \phi_i-(L_{\hat g_i}-\Delta)(\widetilde U_\kappa+\widetilde \phi_i)+2\chi_{\ei}(|y|)\ei \pi_{jl}( x_i)y_3(\d_j\d_l \widetilde U_\kappa)(y)\big\},$$
	\begin{align*}
		\overline{Q}_i&=-\Lambda_i^{-1}\left\{(\widetilde U_{\kappa}+\widetilde \phi_i)^{3}
		-\widetilde U_{\kappa}^{3}-3\widetilde U_{\kappa}^2\widetilde \phi_i+\big(B_{\hat g_i}-\frac{\partial}{\partial y_3}\big)(\widetilde U_{\kappa}+\widetilde \phi_i)\right\}
		\\
		&=-\Lambda_i^{-1}\left\{(\widetilde U_{\kappa}+\widetilde \phi_i)^{3}
		-\widetilde U_{\kappa}^{3}-3\widetilde U_{\kappa}^2\widetilde \phi_i \right\}.
	\end{align*}
	Here, the last equality holds because we are using Fermi coordinates and $h_{\hat g_i}=0$ as a consequence of $h_{g_i}=0$.

	Observe that, for $v=\widetilde U_{\kappa}+\widetilde \phi_i$,
	\begin{align}
		(L_{\hat{g}_i}-\Delta)v(y)&=(\hat{g}_i^{jl}-\delta^{jl})(y)\d_j\d_l v(y)
		+(\d_j\hat{g}_i^{jl})(y)\d_l v(y)\notag
		\\
		&\hspace{2cm}-\frac{1}{8}R_{\hat{g}_i}(y)v(y)
		+\frac{\d_j \sqrt{\det \hat{g}_i}}{\sqrt{\det \hat{g}_i}}\hat{g}_i^{jl}(y)\d_l v(y)\notag
		\\
		&=2\ei y_3\pi_{jl}( x_i)\d_j\d_l v(y)+O(\ei^2(1+|y|)^{-1})\,.\notag
	\end{align}
	Hence,
	\begin{align}\label{Qi}
		Q_i(y)=O\left(\Lambda_i^{-1}\ei^2(1+|y|)^{-3}\right)+O\left(\Lambda_i^{-1}\ei^2(1+|y|)^{-1}\right)\,,
	\end{align}
	and
	\begin{equation}\label{barQi}
		\overline{Q}_i(\bar{y})= O\left(\Lambda_i^{-1}\ei^2(1+|\bar{y}|)^{-1}\right)\,.
	\end{equation}
	Moreover,
	\begin{equation}
		\label{lim:bi}
		b_i\to -15\kappa \widetilde U_\kappa^4\,,\:\bar b_i\to 3\widetilde U_\kappa^2\,,\:\text{as}\:i\to\infty,\:\:\text{in}\: C^2_{loc}(\mathbb R^3)\,,
	\end{equation}
	and
	\begin{equation}
		\label{estim:bi}
		b_i(y)\leq C(1+|y|)^{-4}\,,\:\:\bar b_i(y)\leq C(1+|y|)^{-2}\,,\:\:\:\:\text{for}\:\: |y|\leq\delta\ei^{-1}\,. 
	\end{equation}
	
	Since $|w_i|\leq |w_i(y_i)|=1$, we can use standard elliptic estimates to conclude that $w_i\to w$, in $C_{loc}^2(\mathbb{R}_+^3)$, for some function $w$, choosing a subsequence if necessary. From the identities \eqref{hipLambda}, \eqref{wi}, \eqref{Qi}, \eqref{barQi} and  \eqref{lim:bi}, we see that $w$ satisfies
	\begin{equation}\label{w}
		\begin{cases}
			\Delta w-15\kappa\widetilde  U_{\kappa}^4w=0\,,&\text{in}\:\mathbb{R}_+^3\,,\\
			\displaystyle\frac{\d w}{\d y_3}+3\widetilde U_{\kappa}^{2}w=0\,,&\text{on}\:\partial\mathbb{R}_+^3\,.
		\end{cases}
	\end{equation}

	\bigskip
	\noindent
	{\it{Claim.}} $\:\:w(y)=O((1+|y|)^{-1})$, for $y\in \Rn$.
	
	\vspace{0.2cm}
	Choosing $\delta>0$ sufficiently small, we can consider the Green's function $G_i$ for the conformal Laplacian $L_{\hat{g}_i}$ in $\Bei$ subject to the boundary conditions $B_{\hat{g}_i} G_i=0$ on $\Dei$ and $G_i=0$ on $\Sei$. Let $\eta_i$ be the inward unit normal vector to $\Sei$. Then the Green's formula gives
	\begin{align}\label{wG}
		w_i(y)&=\int_{\Bei}G_i(\xi,y)\left(b_i(\xi)w_i(\xi)-Q_i(\xi)\right) \,dv_{\hat{g}}(\xi)
		+\int_{\Sei}\frac{\partial G_i}{\partial\eta_i}(\xi,y)w_i(\xi)\,d\sigma_{\hat{g}}(\xi)\notag
		\\
		&\hspace{1cm}
		+\int_{\Dei}G_i(\xi,y)\left(\bar b_i(\xi)w_i(\xi)-\overline{Q}_i(\xi)\right)\,d\sigma_{\hat{g}}(\xi)\,.
	\end{align}
	Using the estimates \eqref{Qi}, \eqref{barQi} and \eqref{estim:bi}  in the equation \eqref{wG}, we obtain
	\begin{align}
		|w_i(y)|
		\leq &\:C\int_{\Bei}|\xi-y|^{-1}(1+|\xi|)^{-4}d\xi
		+C\Lambda_i^{-1}\ei^2\int_{\Bei}|\xi-y|^{-1}(1+|\xi|)^{-1}d\xi\notag
		\\
		&+C\int_{\Dei}|\bar{\xi}-y|^{-1}(1+|\bar{\xi}|)^{-2}d\bar{\xi}
		+C\Lambda_i^{-1}\ei^2
		\int_{\Dei}|\bar{\xi}-y|^{-1}(1+|\bar{\xi}|)^{-1}d\bar{\xi}\notag
		\\
		&+C\Lambda_i^{-1}\ei\int_{\Sei}|\xi-y|^{-2}d\sigma(\xi)\,,\notag
	\end{align}
	for $|y|\leq \delta\ei^{-1}/2$. Here, we have used the fact that $|G_i(x,y)|\leq C\,|x-y|^{-1}$ for $|y|\leq \delta\ei^{-1}/2$ and, since $v_i(y)\leq C\widetilde U_{\kappa}(y)$, $|w_i(y)|\leq C\Lambda_i^{-1}\ei$ for $|y|=\delta\ei^{-1}$. Hence, 
	$$
	|w(y)|\leq C\Lambda_i^{-1}\ei^2(\delta\ei^{-1})+C(1+|y|)^{-1}+C\Lambda_i^{-1}\ei^2\log(\delta\ei^{-1})
	+C\Lambda_i^{-1}\ei.
	$$
	This gives
	\begin{equation}\label{estim:wi}
		|w_i(y)|\leq C\,\left((1+|y|)^{-1}+\Lambda_i^{-1}\ei\right)
	\end{equation}
	for $|y|\leq \delta\ei^{-1}/2$.
	The Claim now follows from the hypothesis (\ref{hipLambda}).
	
	Now, we can use the claim above, Lemma \ref{classifLinear} and the scaling invariance of \eqref{linear:homog} to see that 
	$$w(y)=c_1\frac{\d \widetilde U_{\kappa}}{\d y_1}(y)+c_2\frac{\d \widetilde U_{\kappa}}{d y_2}(y)
	+c_3\left(\frac{1}{2}\widetilde U_{\kappa}(y)+\sum_{b=1}^{3}y^b\frac{\d \widetilde U_{\kappa}}{\d y_b}(y)\right)\,,$$
	for some constants $c_1,c_2,c_3$.
	It follows from the identities (\ref{hip:phitilde}) that 
	$$w_i(0)=\frac{\partial w_i}{\partial y_1}(0)=\frac{\partial w_i}{\partial y_2}(0)=0.$$ Thus we conclude that $c_1=c_2=c_3=0$. Hence, $w\equiv 0$. Since $|w_i(y_i)|=1$, we have $|y_i|\to\infty$. This, together with the hypothesis (\ref{hipLambda}), contradicts the estimate (\ref{estim:wi}), since $|y_i|\leq \delta\ei^{-1}/2$, and concludes the proof of Proposition \ref{estim:blowup:compl}.
	\ep


	\section{The Pohozaev sign restriction in dimension three}\label{sec:sign:restr}
	
	In this section, we prove a sign restriction for an integral term appearing in a Pohozaev type identity and some consequences for the blow-up set. 

Using Fermi coordinates $\psi:B_\delta^+\to M$, we work with the metric $\psi^*g$ on $B_{\delta}^+$ with $\partial B_{\delta}^+=S_{\delta}^+\cup D_{\delta}$ (see Section \ref{sec:pre} for notation). For simplicity, we will omit the symbol $\psi$.
For $z=(z_1,...,z_n)\in \R^n$ we set $r=|z|=\sqrt{z_1^2+...+z_n^2}$. For any smooth positive function $u$ on $B^+_{\delta}$ and $0<\rho<\delta$, we define
\begin{align*}
	P(u,\rho)&=\int_{S_{\rho}^+}\left(\frac{n-2}{2}u\frac{\partial u}{\partial r}-\frac{r}{2}|du|^2+r\left|\frac{\partial u}{\partial r}\right|^2\right)d\sigma
	+\frac{(n-2)\rho}{2n}\int_{S_{\rho}^+}Ku^{\frac{2n}{n-2}}d\sigma \\&+\frac{(n-2)\rho}{2(n-1)}\int_{\partial D_{\rho}}cu^{\frac{2(n-1)}{n-2}}d\bar{\sigma}
\end{align*}
and
$$P'(u,\rho)=\int_{S_{\rho}^+}\left(\frac{n-2}{2}u\frac{\partial u}{\partial r}-\frac{r}{2}|du|^2+r\left|\frac{\partial u}{\partial r}\right|^2\right)d\sigma\,.$$
Here, $d\sigma$ and $d\bar\sigma$ are the area elements of $S^+_\rho$ and $\partial D_\rho$ respectively, and $K, c$ are constants.
The Pohozaev identity for our situation is the following: 

\begin{proposition}\label{Pohozaev}
	If $u>0$ is a solution of
	\begin{equation*}
		\begin{cases}
			L_g u+Ku^{\frac{n+2}{n-2}}=0, \,&\text{in}\:B^+_{\rho}\,,\\
			B_gu+cu^{\frac{n}{n-2}}=0, \,&\text{on}\:D_{\rho} \,,
		\end{cases}
	\end{equation*}
then
	\begin{align}\label{eq:Pohozaev}
		P(u,\rho)&=-\int_{B_{\rho}^+}\left(z^a\partial_a u+\frac{n-2}{2}u\right)(L_g-\Delta)(u)dz
		\\		
		&\hspace{0.2cm}-\int_{D_{\rho}}\left(z^j\partial_j u+\frac{n-2}{2}u\right)(B_g-\partial_n)(u)d\bar{z},\notag
	\end{align} 
	where $\Delta$ is the Euclidean Laplacian.
\end{proposition}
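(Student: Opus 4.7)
The plan is to derive this Pohozaev-type identity from the Euclidean integration-by-parts identity associated to the dilation vector field, i.e.\ with the multiplier $Xu := z^a\partial_a u + \frac{n-2}{2}u$, and then substitute the equations for $L_g u$ and $B_g u$. The appearance of two boundary pieces $\partial B_\rho^+ = S_\rho^+ \cup D_\rho$ is the only thing that requires care beyond the classical closed/boundaryless case. The most useful simplification is that on $D_\rho$ the outward conormal to $B_\rho^+$ is $-\partial_n$ and $z_n = 0$, so $z^a\nu_a = 0$ there; this kills half of the terms one would naively expect from the lower-dimensional boundary integrals.

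First I would establish the purely Euclidean identity
\begin{equation*}
\int_{B_\rho^+} (Xu)\,\Delta u\,dz
= P'(u,\rho) \;-\; \int_{D_\rho}\Bigl(z^j\partial_j u + \tfrac{n-2}{2}u\Bigr)\,\partial_n u\,d\bar z.
\end{equation*}
This comes from two successive integrations by parts. Writing $(z^a\partial_a u)\Delta u = \partial_a\bigl((z^a\partial_b u)\partial_b u\bigr) - \partial_b u\,\partial_a(z^a\partial_b u)$ and expanding, one obtains interior contributions proportional to $|\nabla u|^2$ that combine with the $\tfrac{n-2}{2}u\,\Delta u$ piece and cancel exactly (this is the scale-invariance at the heart of Pohozaev). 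The remaining boundary integrands are $(z^a\partial_a u)\partial_\nu u + \tfrac{n-2}{2}u\,\partial_\nu u - \tfrac12(z^a\nu_a)|\nabla u|^2$; on $S_\rho^+$, $\nu=\partial_r$, $z^a\nu_a = r$ and $z^a\partial_a u = r\partial_r u$, which assembles into $P'(u,\rho)$, and on $D_\rho$, $\nu = -\partial_n$ and $z^a\nu_a = -z_n = 0$, leaving only the stated term.

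Next I would substitute the equations. Writing $\Delta u = L_g u - (L_g - \Delta)u = -Ku^{(n+2)/(n-2)} - (L_g - \Delta)u$ in the interior and $\partial_n u = B_g u - (B_g - \partial_n)u = -cu^{n/(n-2)} - (B_g - \partial_n)u$ on $D_\rho$, the error terms produce exactly the right-hand side of \eqref{eq:Pohozaev}, so it only remains to handle the nonlinearities. For these, I use $u^{(n+2)/(n-2)}\partial_a u = \tfrac{n-2}{2n}\partial_a(u^{2n/(n-2)})$ and integrate by parts: the interior $-\tfrac{n-2}{2}\int u^{2n/(n-2)}$ cancels the contribution coming from the $\tfrac{n-2}{2}u$ piece of the multiplier, the $D_\rho$ boundary contribution vanishes since $z\cdot\nu_{B_\rho^+} = -z_n = 0$ there, and what survives is $\frac{(n-2)\rho}{2n}\int_{S_\rho^+}u^{2n/(n-2)}d\sigma$. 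The identical scheme in $\R^{n-1}$ applied to $u^{n/(n-2)}\partial_j u = \tfrac{n-2}{2(n-1)}\partial_j(u^{2(n-1)/(n-2)})$ over $D_\rho$ yields $\frac{(n-2)\rho}{2(n-1)}\int_{\partial D_\rho}u^{2(n-1)/(n-2)}d\bar\sigma$ after the corresponding interior cancellation.

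Collecting the three surviving boundary contributions with $P'(u,\rho)$ reproduces $P(u,\rho)$, and the two error terms involving $L_g-\Delta$ and $B_g-\partial_n$ are precisely the right-hand side of \eqref{eq:Pohozaev}. I do not anticipate any real obstacle; the entire argument is bookkeeping, and the only place where one must be slightly attentive is checking that the codimension-two corner $\partial D_\rho$ never produces an unwanted contribution — this follows automatically because all lower-boundary integrals we needed to integrate by parts on $D_\rho$ have an extra factor of $z_n = 0$ or are handled by the codimension-one $(n-1)$-dimensional divergence theorem on $D_\rho$ itself, which only sees $\partial D_\rho$.
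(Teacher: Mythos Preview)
Your proposal is correct and is precisely the standard integration-by-parts derivation the paper has in mind; indeed the paper gives no details beyond ``integration by parts'' and a reference to \cite[Proposition 3.1]{almaraz3}, and your computation with the multiplier $z^a\partial_a u+\tfrac{n-2}{2}u$, the splitting $\partial B_\rho^+=S_\rho^+\cup D_\rho$, and the observation $z^a\nu_a=0$ on $D_\rho$ is exactly that argument.
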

\begin{proof}
	The proof is similar to \cite[Proposition 3.1]{almaraz3} using integration by parts. 
\end{proof}

Now we are ready to obtain the Pohozaev sign condition to be used in the proof of Theorem~\ref{compactness:thm}.
In what follows, assume that  $n=3$. 

\begin{theorem}\label{cond:sinal}
Let $x_i\to x_0\in \partial M$ be an isolated simple blow-up point for the sequence $\{u_i\}$ of positive solutions to the equations \eqref{eq:blow-up} with $h_{g_i}=0$.
	Suppose that $u_i(x_i)u_i\to G$ away from $x_0$, for some function $G$. Then
	\begin{equation}\label{eq:cond:sinal}
		\liminf_{r\to 0}P'(G,r)\geq 0,
	\end{equation}
where we are using Fermi coordinates centred at $x_i$ in the expression for $P'$.
\end{theorem}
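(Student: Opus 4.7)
The plan is to apply the Pohozaev identity \eqref{eq:Pohozaev} to $u_i$ on $B_\rho^+$ for a small fixed $\rho>0$, multiply through by $u_i(x_i)^2 = \ei^{-1}$, and send $i\to \infty$ followed by $\rho\to 0$. Because $h_{g_i}\equiv 0$ by our standing agreement, $B_{g_i} - \partial_n = -\tfrac{n-2}{2}h_{g_i} \equiv 0$ in Fermi coordinates, so the $D_\rho$-boundary integral in \eqref{eq:Pohozaev} vanishes identically and only the bulk term survives.

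For the left-hand side, set $w_i := u_i(x_i)u_i$. Quadratic scaling gives $u_i(x_i)^2 P'(u_i, \rho) = P'(w_i, \rho) \to P'(G, \rho)$ as $i\to\infty$, using $C^2$-convergence $w_i \to G$ on the fixed hemisphere $S_\rho^+$. The two non-linear scalar terms in $P(u_i, \rho)$ are controlled by Proposition~\ref{estim:simples}(a): from $u_i(x_i)u_i(z) \leq C|z|^{2-n}$ one obtains $u_i(x_i)^2\rho \int_{S_\rho^+} u_i^{2n/(n-2)}d\sigma \to 0$ and $u_i(x_i)^2\rho \int_{\partial D_\rho} u_i^{2(n-1)/(n-2)} d\bar\sigma \to 0$ as $i\to\infty$. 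Hence $\lim_{i\to\infty} u_i(x_i)^2 P(u_i, \rho) = P'(G, \rho)$.

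For the bulk integral on the right of \eqref{eq:Pohozaev}, I change variables $z = \ei y$, using $n=3$ so that $u_i(x_i)^4\ei^{n-1} = 1$. Writing $J_y(v) := y^a\partial_a v + \tfrac{n-2}{2}v$, one has $(z^a\partial_a u_i + \tfrac{n-2}{2}u_i)(\ei y) = u_i(x_i) J_y(v_i)(y)$, while the expansion \eqref{exp:g} combined with $h_{g_i}(x_i) = 0$ (so that $\sqrt{\det g_{ab}} = 1 + O(|z|^2)$) gives
$$
(L_{g_i} - \Delta)u_i(\ei y) = 2\pi_{jl}(x_i)\,\ei y_n\,(\partial_j\partial_l u_i)(\ei y) + u_i(x_i)\bigl[O(|y|^2)\partial^2 v_i + O(|y|)\partial v_i + O(1) v_i\bigr].
$$
The leading piece contributes $2\pi_{jl}(x_i) \int_{B^+_{\rho\ei^{-1}}} J_y(v_i)\, y_n\, \partial_j\partial_l v_i\, dy$, and I apply the refined expansion $v_i = \widetilde U_\kappa + \widetilde \phi_i + O(\ei(1+|y|)^{-1})$ from Proposition~\ref{estim:blowup:compl}. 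The pure $\widetilde U_\kappa$ contribution vanishes \emph{exactly} on $B^+_{\rho\ei^{-1}}$: since $\widetilde U_\kappa$ is radially symmetric in $\bar y$, a direct calculation shows $\pi_{jl}\,\partial_j\partial_l \widetilde U_\kappa = (\pi_{jl} y_j y_l /|\bar y|^2)\,F(|\bar y|, y_n)$ for an explicit $F$ (the $\delta_{jl}$-part drops out by $\operatorname{tr}\pi_{jl}(x_i) = (n-1)h_{g_i}(x_i) = 0$), and angular integration $\int_{S^1}\omega_j\omega_l\,d\theta \propto \delta_{jl}$ annihilates the remainder, once more by tracelessness.

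For the cross terms in $v_i$, the bounds \eqref{estim:phi'} imply each contributes only $O(\rho)$: for example the integrand of $\int_{B^+_{\rho\ei^{-1}}} J_y(\widetilde U_\kappa)\,y_n\,\partial_j\partial_l\widetilde\phi_i\,dy$ is bounded by $C\ei\,|y|/(1+|y|)^{3}$, whose integral over $B^+_{\rho\ei^{-1}}$ is $O(\rho)$ in dimension three. Likewise the sub-leading terms in $(L_{g_i} - \Delta)u_i$ pick up an extra factor $\ei$ after scaling; using $|J_y(v_i)|, |v_i| = O((1+|y|)^{-1})$, $|\partial v_i| = O((1+|y|)^{-2})$, $|\partial^2 v_i| = O((1+|y|)^{-3})$, the integrand is $O((1+|y|)^{-2})$, so $\ei \int_{B^+_{\rho\ei^{-1}}}(1+|y|)^{-2}dy = O(\rho)$. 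Combining all pieces, $u_i(x_i)^2 \cdot (\text{RHS of }\eqref{eq:Pohozaev}) = o_i(1) + O(\rho)$, giving $P'(G, \rho) \geq -C\rho$ and hence $\liminf_{\rho\to 0} P'(G, \rho) \geq 0$. The main technical difficulty is precisely this tight cancellation at leading order: only the combined effect of tracelessness of $\pi_{jl}(x_i)$ (guaranteed by $h_{g_i}=0$) and the radial symmetry of $\widetilde U_\kappa$ in $\bar y$ kills the $O(1)$ contribution, and the correction $\widetilde\phi_i$ of Proposition~\ref{Linearized} is needed to keep the residual cross terms at the harmless order $O(\rho)$.
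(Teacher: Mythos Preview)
Your proof is correct and follows essentially the same approach as the paper: apply the Pohozaev identity, rescale the bulk term to $y$-variables, insert the refined expansion $v_i=\widetilde U_\kappa+\widetilde\phi_i+O(\ei)$ from Proposition~\ref{estim:blowup:compl}, use tracelessness of $\pi_{jl}(x_i)$ together with the rotational symmetry of $\widetilde U_\kappa$ in $\bar y$ to kill the leading contribution, and bound all remaining pieces by $O(\rho)$. The only difference is organizational---the paper packages the comparison as $|F_i(u_i,r)-F_i(\check U_i+\check\phi_i,r)|\leq C\ei r$ and then invokes ``symmetry arguments'' tersely for $F_i(\check U_i+\check\phi_i,r)=O(\ei r)$, whereas you expand $v_i$ directly and spell out the angular-integration mechanism; one minor imprecision is that Proposition~\ref{estim:blowup:compl} gives $v_i-\widetilde U_\kappa-\widetilde\phi_i=O(\ei)$ (no decay) at order $s=0$, not $O(\ei(1+|y|)^{-1})$, but this does not affect your estimates since the relevant bounds come from the $s=1,2$ cases.
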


\bp
We use Fermi coordinates $\psi_i$ centred at $x_i$ and omit the symbol $\psi_i$ to simplify the notation. We define $v_i(y)=\ei^{1/2}u_i(\ei y)$ for $y\in \Bei$ and have 
\begin{align}\notag
	\begin{cases}
		L_{\hat{g}_i}v_i-3\kappa v_i^{5}=0,&\text{in}\:\Bei,
		\\
		B_{\hat{g}_i}v_i+v_i^{3}=0,&\text{on}\:\Dei,
	\end{cases}
\end{align}
where $\hat{g}_i$ is the metric with coefficients $(\hat{g}_i)_{jl}(y)=(g_i)_{jl}(\ei y)$.
Observe that, from Propositions \ref{form:bolha} and \ref{estim:simples}, we know that $v_i(y)\leq C(1+|y|)^{-1}$ in $B^+_{\delta\ei^{-1}}$.

If $\Delta$ is the Euclidean Laplacian, we set
$$
F_i(u,r)=-\int_{B_r^+}(z^b\partial_bu+\frac{1}{2}u)(L_{g_i}-\Delta)u\,dz
$$
and write the Pohozaev identity \eqref{eq:Pohozaev} as
\begin{equation}\label{Pohoz}
	P(u_i,r)=F_i(u_i,r)\,.
\end{equation}
Here, the integral on $D_r$ vanishes as we are using Fermi coordinates and assuming that $h_{g_i}=0$. 
Observe that 
\begin{align*}
	F_i(u_i,r)=-\int_{B_{r\epsilon_i^{-1}}^+}(y^b\partial_bv_i+\frac{1}{2}v_i)(L_{\hat{g}_i}-\Delta)v_idy.
\end{align*}

Using the notation of Proposition \ref{estim:blowup:compl}, set $\check{U}_i(z)=\ei^{-1/2}\widetilde U_{\kappa}(\ei^{-1}z)$ and $\check{\phi}_i(z)=\ei^{-1/2}\widetilde \phi_i(\ei^{-1}z)$. We have
\begin{align}
	F_i(\check{U}_i+\check{\phi}_i,r)
	&=-\int_{B_r^+}(z^b\partial_b(\check{U}_i+\check{\phi}_i)
	+\frac{1}{2}(\check{U}_i+\check{\phi}_i))(L_{g_i}-\Delta)(\check{U}_i+\check{\phi}_i) dz\notag
	\\
	&={-}\int_{B_{r\epsilon_i^{-1}}^+}(y^b\partial_b(\widetilde U_{\kappa}+\widetilde \phi_i)
	+\frac{1}{2}(\widetilde U_{\kappa}+\widetilde \phi_i))\notag
	(L_{\hat{g}_i}-\Delta)(\widetilde U_{\kappa}+\widetilde \phi_i)dy\,.
	\notag
\end{align}
It follows from that proposition that 
\begin{equation}\label{approx:F}
	|F_i(u_i,r)-F_i(\check{U}_i+\check{\phi}_i,r)|\leq C \ei^2\int_{B_{r\epsilon_i^{-1}}^+}(1+|y|)^{-2}dy
	\leq C\ei r\,.
\end{equation}
Using \eqref{exp:g}, due to symmetry arguments and using again $h_{g_i}=0$, we have
$$
F_i(\check{U}_i+\check{\phi}_i,r)=O(\ei r)\,.
$$
Hence, $P(u_i,r)\geq -C\ei r$, which implies that
$$
P'(G,r)=\lim_{i\to \infty}\ei^{-1}P(u_i,r)\geq -Cr\,.
$$
The conclusion follows if we let $r\rightarrow 0$.
\ep

\begin{remark}
In the applications of Theorem \ref{cond:sinal} in the rest of this work, $g_i$ will be either $g$ or a rescaling of that metric. In both cases, the hypothesis $h_{g_i}=0$ is fulfilled.
\end{remark}

Once we have proved Theorem \ref{cond:sinal}, the next two propositions are similar to \cite[Lemma 8.2, Proposition 8.3]{khuri-marques-schoen} or \cite[Propositions 4.1 and 5.2]{li-zhu2}.
\begin{proposition}\label{isolado:impl:simples}
	Let $x_i\to x_0$ be an isolated  blow-up point for the sequence $\{u_i\}$ of positive solutions to the equations \eqref{eq:blow-up} with $h_{g_i}=0$. Then $x_i\to x_0$ is an isolated simple blow-up point for $\{u_i\}$.
\end{proposition}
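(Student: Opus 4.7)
The argument is by contradiction, following the Schoen-type strategy carried out, e.g., in \cite{li-zhu2} and \cite{han-li1}. Suppose $x_i\to x_0$ is isolated but not isolated simple. By Remark~\ref{rk:def:equiv}, for every large $i$ the function $w_i(r)=r^{1/2}\bar u_i(r)$ admits a critical point beyond the first one (which sits near $r\sim u_i(x_i)^{-2}$); take $\rho_i$ to be the smallest such second critical point. Then $\rho_i\to 0$ while $\rho_i u_i(x_i)^{2}\to\infty$.

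Rescale by $\rho_i$: set $v_i(y):=\rho_i^{1/2}u_i(\psi_i(\rho_i y))$ on $B^+_{\delta'/\rho_i}$. The metrics $\hat g_i(y)=g_i(\psi_i(\rho_i y))$ converge to the Euclidean metric and $v_i$ satisfies the rescaled analogue of \eqref{eq:blow-up}. By scale invariance, $W_i(\tilde r):=\tilde r^{1/2}\bar v_i(\tilde r)=w_i(\rho_i\tilde r)$ has its first critical point at $\tilde r\sim 1/(\rho_i u_i(x_i)^{2})\to 0$ and its next critical point exactly at $\tilde r=1$; hence $y=0$ is an isolated simple blow-up point for $\{v_i\}$ on every interval $(0,1-\epsilon)$, and $v_i(0)=\rho_i^{1/2}u_i(x_i)\to\infty$. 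Applying Proposition~\ref{estim:simples} to $\{v_i\}$ and combining with the Harnack inequality across the sphere $|y|=1$, the products $v_i(0)v_i$ are uniformly bounded on compact subsets of $\overline{B^+_1}\setminus\{0\}$. Along a subsequence, $v_i(0)v_i\to G$ in $C^2_{\mathrm{loc}}(\overline{B^+_1}\setminus\{0\})$ with $G>0$, $\Delta G=0$ in $B^+_1\setminus\{0\}$, $\partial_n G=0$ on $D_1\setminus\{0\}$, and $G(y)\leq C|y|^{-1}$.

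Extending $G$ across $\partial\R^3_+$ by even reflection produces a positive harmonic function on $B_1\setminus\{0\}$ with a pole at the origin. B\^ocher's theorem, together with the lower bound in Proposition~\ref{estim:simples}(b), yields $G(y)=a|y|^{-1}+A(y)$ with $a>0$ and $A$ smooth harmonic (and symmetric under reflection) on $B_1$. The full-sphere mean-value property applied to the reflected $A$ gives $\bar A(r)\equiv A(0)$ on $[0,1]$, so that $W_G(r):=r^{1/2}\bar G(r)=ar^{-1/2}+A(0)\,r^{1/2}$ on $(0,1)$. The identity $W_i'(1)=0$ passes to the limit via $C^1$-convergence near $S^+_1$, giving $W_G'(1)=\tfrac12(A(0)-a)=0$ and hence $A(0)=a>0$.

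On the other hand, Theorem~\ref{cond:sinal} applied to $\{v_i\}$ yields $\liminf_{r\to 0}P'(G,r)\geq 0$. Substituting $G=a|y|^{-1}+A$ into the surface integral defining $P'(G,r)$, the most singular contributions of order $r^{-3}$ cancel exactly; then using $\bar A\equiv A(0)$ and the Neumann condition $\partial_{y_3}A(0)=0$ to dispose of the remaining lower-order cross terms, one is left with $\lim_{r\to 0}P'(G,r)=-\pi a A(0)$. This forces $A(0)\leq 0$, contradicting $A(0)=a>0$. The step I expect to require the most care is ensuring that the estimates of Proposition~\ref{estim:simples} for the rescaled $\{v_i\}$ are uniform up to and slightly beyond the sphere $S^+_1$, so that the critical point condition $W_i'(1)=0$ can be passed to the $C^1$-limit; this requires verifying that the parameter $\delta$ produced by Proposition~\ref{estim:simples} can be chosen uniformly close to $1$, which is compatible with the construction because $\{v_i\}$ is isolated simple on every $(0,1-\epsilon)$ uniformly in $i$.
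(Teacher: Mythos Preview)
Your argument is correct and is precisely the standard Schoen--Li--Zhu rescaling strategy the paper refers to (\cite[Lemma~8.2]{khuri-marques-schoen}, \cite[Proposition~4.1]{li-zhu2}): the paper gives no independent proof here beyond citing those references, so your write-up is in fact more detailed than what the paper provides. Your concern about uniformity of Proposition~\ref{estim:simples} up to $|y|=1$ is handled exactly as you suggest---since the rescaled metrics $\hat g_i$ converge to the Euclidean metric, the smallness of $\delta$ in that proposition (needed only to control curvature error terms and to guarantee the Green's function) is automatic on any fixed ball for large $i$, and Lemma~\ref{Harnack} bridges the annulus $1/2\le|y|\le 2$; this is consistent with the Remark following Theorem~\ref{cond:sinal}, which explicitly anticipates applying that theorem to rescalings of $g$.
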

\begin{proposition} \label{dist:unif}
	Let $\kappa, \b, R, u, C_0(\kappa,\b,R)$ and $\{x_1,...,x_N\}\subset M$ be as in Proposition \ref{conj:isolados}. If $\b$ is sufficiently small and $R$ is  sufficiently large, then there exists a constant $\overline C(\kappa, \b,R)>0$ such that if  $\max_{\partial M}u\geq C_0$ then 
	$$d_g(x_j,x_l)\geq \overline C\:\:\:\:\:\text{for all}\:1\leq j\neq l\leq N.$$
\end{proposition}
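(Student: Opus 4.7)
The plan is contradiction via blow-up analysis combined with the Pohozaev sign restriction of Theorem~\ref{cond:sinal}. Assume the proposition fails: there is a sequence of solutions $u_i$ to \eqref{main:equation:1} with $\max_{\partial M} u_i \to \infty$, local maxima $\{x_{1,i},\ldots,x_{N_i,i}\}$ from Proposition~\ref{conj:isolados}, and $\sigma_i := \min_{j \neq l} d_g(x_{j,i}, x_{l,i}) \to 0$. Relabel so that $\sigma_i = d_g(x_{1,i}, x_{2,i})$ and $M_i := u_i(x_{1,i}) \geq u_i(x_{2,i})$.

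First I would rescale at the bubble scale around $x_{1,i}$: with $\psi_i$ Fermi coordinates at $x_{1,i}$, set $v_i(y) = M_i^{-1} u_i(\psi_i(M_i^{-2/(n-2)} y))$. By Proposition~\ref{form:bolha} this converges in $C^2_{loc}(\mathbb R^n_+)$ to $\lambda^{(n-2)/2} U_\kappa(\lambda\cdot)$ with $\lambda = 1 - \kappa$, and the rescaled position of $x_{2,i}$ is $y_{2,i} = M_i^{2/(n-2)} \psi_i^{-1}(x_{2,i})$ with $|y_{2,i}| = M_i^{2/(n-2)} \sigma_i (1 + o(1)) \geq C_0^{2/(n-2)}$ by Proposition~\ref{conj:isolados}(3). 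If $\{|y_{2,i}|\}$ stayed bounded, a subsequential limit $y_2 \neq 0$ would be a local maximum of $\lambda^{(n-2)/2} U_\kappa(\lambda\cdot)$; but by the classification in \cite{chipot-fila-shafrir} and the explicit form \eqref{eq:Uk}, this function has a unique local maximum on $\mathbb R^n_+$, namely the origin. Therefore $M_i^{2/(n-2)} \sigma_i \to \infty$.

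Next I would switch to the intermediate scale $\sigma_i$: define $\tilde u_i(y) = \sigma_i^{(n-2)/2} u_i(\psi_i(\sigma_i y))$ and $(\tilde g_i)_{ab}(y) = g_{ab}(\psi_i(\sigma_i y))$. Then $\tilde u_i$ solves the same Escobar--Yamabe system on $(B^+_{\delta/\sigma_i}, \tilde g_i)$, with $\tilde g_i \to \delta_{\mathbb R^n}$ in $C^2_{loc}$ and $h_{\tilde g_i} = 0$ preserved in Fermi coordinates. The points $x_{1,i}, x_{2,i}$ sit at $0$ and $\tilde y_{2,i}$ with $|\tilde y_{2,i}| \to 1$ (subsequentially $\tilde y_{2,i} \to \tilde y_2 \in \partial\mathbb R^n_+$, $|\tilde y_2| = 1$), and $\tilde u_i(0) = |y_{2,i}|^{(n-2)/2} \to \infty$. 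The upper bound in Proposition~\ref{conj:isolados}(3), together with the fact that all other listed points are at rescaled distance $\geq 1$, gives $\tilde u_i(y) \leq C_1 |y|^{-(n-2)/2}$ on $B^+_{1/2}$, so the origin is an isolated blow-up for $\{\tilde u_i\}$, which Proposition~\ref{isolado:impl:simples} upgrades to isolated simple.

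Now apply Theorem~\ref{cond:sinal} at $0$: after passing to a subsequence, $G := \lim_i \tilde u_i(0) \tilde u_i$ exists in $C^2_{loc}(\mathbb R^n_+ \setminus \{0, \tilde y_2, \ldots\})$ with $\liminf_{r \to 0} P'(G, r) \geq 0$. Passing to the limit in the equation (using Proposition~\ref{estim:simples}(a) to make the nonlinear term vanish), $G$ is harmonic on $\mathbb R^n_+$ away from the singular set with Neumann condition on $\partial \mathbb R^n_+$; near $0$, by Proposition~\ref{estim:simples}(b), $G$ has the expansion $G(z) = a|z|^{-1} + A + O(|z|)$ with $a > 0$ (in the $n = 3$ case). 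A direct computation yields $\lim_{r \to 0} P'(G, r) = -\pi a A$, forcing $A \leq 0$. On the other hand, $\tilde u_i(0)\tilde u_i(\tilde y_{2,i}) \geq C_0 \tilde u_i(0) \to \infty$ forces $G$ to have a positive Green-type contribution at $\tilde y_2$; via the Neumann reflection principle this contributes a term $\sim b|z - \tilde y_2|^{-1}$ with $b > 0$ whose expansion at $0$ gives $A \geq b|\tilde y_2|^{-1} = b > 0$, a contradiction. The main obstacle is this last step: rigorously controlling $A$ from below via the global structure of $G$ on the rescaled half-space and checking that any extra contributions from other listed points (at rescaled distance $\geq 1$) are non-negative, so the sign of the ``mass'' argument is preserved.
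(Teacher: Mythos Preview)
Your proposal is correct and follows essentially the same route the paper intends: the paper does not give a detailed proof of Proposition~\ref{dist:unif} but refers to \cite[Proposition 8.3]{khuri-marques-schoen} and \cite[Proposition 5.2]{li-zhu2}, whose arguments are precisely the contradiction-by-rescaling scheme you outline (rescale at the minimal-distance scale $\sigma_i$, use Proposition~\ref{isolado:impl:simples} to get isolated simple blow-up, pass to a harmonic limit $G$ with at least two poles, and contradict Theorem~\ref{cond:sinal} via the sign of the constant term $A$). The ``main obstacle'' you flag---showing $A>0$---is exactly the standard step in those references: one subtracts the pole contributions $a_0|z|^{2-n}$ and $a_2|z-\tilde y_2|^{2-n}$, applies the minimum principle on $B^+_R$ (using that the remaining singularities send the function to $+\infty$), and lets $R\to\infty$ so that the boundary term $-a_0R^{2-n}-a_2(R-1)^{2-n}\to 0$, yielding $A\geq a_2|\tilde y_2|^{2-n}>0$.
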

\begin{corollary}\label{Corol:8.4}
	Suppose the sequence $\{u_i\}$ of positive solutions to the equations \eqref{eq:blow-up}, with $h_{g_i}=0$, satisfies $\max_{\partial M} u_i\to\infty$. Then the set of blow-up points for $\{u_i\}$ is finite and consists only of isolated simple blow-up points.
\end{corollary}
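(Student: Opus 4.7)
The plan is to assemble Propositions \ref{conj:isolados}, \ref{dist:unif}, and \ref{isolado:impl:simples} in sequence. Since $\max_{\partial M}u_i\to\infty$, for $i$ sufficiently large we have $\max_M u_i\geq C_0$, so Proposition \ref{conj:isolados} applied to each $u_i$ produces a collection $\{x_1^{(i)},\ldots,x_{N_i}^{(i)}\}\subset\partial M$ of local maxima of $u_i$ satisfying properties (1)--(3) there, with parameters $\beta>0$ small and $R>0$ large chosen so that Proposition \ref{dist:unif} is also applicable.

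Next I would invoke Proposition \ref{dist:unif} to conclude that the points of $\{x_1^{(i)},\ldots,x_{N_i}^{(i)}\}$ are pairwise separated by a uniform distance $\overline C>0$. Since $\partial M$ is compact, this bounds $N_i\leq N_0$ uniformly in $i$. Passing to a subsequence, we may assume $N_i\equiv N$ and $x_j^{(i)}\to x_j\in\partial M$ with the limits $x_1,\ldots,x_N$ pairwise distinct (by $d_g(x_j,x_l)\geq\overline C$).

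Then I would check that $\{x_1,\ldots,x_N\}$ is exactly the blow-up set of $\{u_i\}$ and that each $x_j$ is an isolated blow-up point in the sense of Definition \ref{def:isolado}. Each $x_j$ is a blow-up point because $u_i(x_j^{(i)})\to\infty$ and each $x_j^{(i)}$ is a local maximum. Conversely, if $y_i\to y_0$ were a blow-up sequence with $y_0\notin\{x_1,\ldots,x_N\}$, then $d_g(y_i,\{x_1^{(i)},\ldots,x_N^{(i)}\})$ would stay bounded below by a positive constant while $u_i(y_i)\to\infty$, contradicting estimate (3) of Proposition \ref{conj:isolados}. The same estimate (3), combined with the uniform separation $d_g(x_j^{(i)},x_l^{(i)})\geq\overline C$, yields $u_i(x)\leq C\,d_{g_i}(x,x_j^{(i)})^{-(n-2)/2}$ in a fixed neighbourhood of $x_j^{(i)}$, which is precisely the isolated blow-up condition.

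Finally, Proposition \ref{isolado:impl:simples} upgrades each isolated blow-up point $x_j^{(i)}\to x_j$ to an isolated simple blow-up point, completing the proof. I do not anticipate a main obstacle here, as the corollary is essentially a bookkeeping consequence of the three preceding results; the only point requiring some care is verifying that the collection supplied by Proposition \ref{conj:isolados} exhausts all possible blow-up sequences, which is handled by the contradiction argument above using the pointwise bound (3).
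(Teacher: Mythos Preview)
Your proposal is correct and follows the standard assembly that the paper implicitly invokes (it states the corollary without proof immediately after Propositions~\ref{isolado:impl:simples} and~\ref{dist:unif}, referring to \cite{khuri-marques-schoen,li-zhu2}). One small point: the assertion that \emph{each} $u_i(x_j^{(i)})\to\infty$ is not directly stated in Proposition~\ref{conj:isolados} and strictly speaking needs the construction behind it (via Lemma~\ref{compact:set:lemma}), but this is harmless for the corollary since you only need the blow-up set to be \emph{contained} in $\{x_1,\ldots,x_N\}$, which your contradiction argument via estimate~(3) already establishes.
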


\begin{remark}\label{rmk:corol}
The result of Corollary \ref{Corol:8.4} also holds for the case when $M$ is locally conformally flat with umbilical boundary, $n\geq 3$. In this case, a conformal change is necessary near each blow-up point in such a way that the metric becomes the Euclidean one in local coordinates. The Pohozaev sign restriction of Theorem \ref{cond:sinal} then holds for these conformal metrics around each blow-up point as the right-hand side of \eqref{eq:Pohozaev} vanishes for the Euclidean metric. As a consequence,  Propositions \ref{isolado:impl:simples} and \ref{dist:unif} are also proved. This argument is carried out in \cite{han-li1} where the reader may find more details.
\end{remark}


\section{The proof of Theorem \ref{compactness:thm}}\label{sec:pf:thm}

Before proving our main theorem, we shall recall a geometric invariant term called the mass of a manifold with non-compact boundary and recall the relevant positive mass theorem for this setting (see \cite{almaraz-barbosa-lima}).

\begin{definition}\label{def:asym}
	Let $(\hat M, \hat g)$ be a Riemannian manifold with a non-compact  boundary $\d \hat M$. Assume that $R_{\hat g}$ is integrable on $\hat M$, and $\cmedia_{\hat g}$  is integrable on $\d \hat M$.
	We say that $\hat M$ is {\it{asymptotically flat}} with order $q>(n-2)/2$, if there is a compact set $K\subset \hat M$ and a diffeomorphism $f:\hat M\backslash K\to \Rn\backslash \overline{B^+_1}$ such that, in the coordinate chart $(y_1,...,y_n)$ defined by $f$ (which we call the {\it  asymptotic coordinates} of $\hat M$), we have
	$$
	|\hat g_{ab}(y)-\delta_{ab}|+|y||\hat g_{ab,c}(y)|+|y|^2|\hat g_{ab,cd}(y)|=O(|y|^{-q})\,,
	\:\:\:\:\text{as}\:\:|y|\to\infty\,,
	$$
	where $a,b,c,d=1,...,n$, and commas are denoting derivatives.
\end{definition}

Suppose the manifold $\hat M$, of dimension $n\geq 3$,  is asymptotically flat with order $q>(n-2)/2$, as defined  above. 
Then the limit
\begin{align}\label{def:mass}
	m(\hat g)=
	\lim_{R\to\infty}\left\{
	\sum_{a,b=1}^{n}\int_{y\in\Rn,\, |y|=R}(\hat g_{ab,b}-\hat g_{bb,a})\frac{y_a}{|y|}\,\ds
	+\sum_{j=1}^{n-1}\int_{y\in\d\Rn,\, |y|=R}\hat g_{nj}\frac{y_j}{|y|}\,d\bar\sigma\right\}
\end{align}
exists, and we call it the {\it mass} of $(\hat M, \hat g)$. As proved in \cite{almaraz-barbosa-lima}, $m(\hat g)$ is a geometric invariant in the sense that it does not depend on the asymptotic coordinates.

The expression in \eqref{def:mass} is the analogue of the ADM mass for the manifolds of Definition \ref{def:asym}. A positive mass theorem for $m(\hat g)$, similar to the classical ones in \cite{schoen-yau, witten}, is stated as follows:
\begin{theorem}[\cite{almaraz-barbosa-lima}]\label{pmt}
	Assume $3\leq n\leq 7$.
	If $R_{\hat g}$, $\cmedia_{\hat g}\geq 0$, then we have $m(\hat g)\geq 0$ and the equality holds if and only if $\hat M$ is isometric to $\R_+^n$.
\end{theorem}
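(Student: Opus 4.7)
The plan is to adapt the Schoen--Yau minimal hypersurface induction scheme to our setting of asymptotically flat manifolds with non-compact boundary, replacing closed area-minimizing hypersurfaces by \emph{free boundary} ones that meet $\partial\hat M$ orthogonally. The dimension restriction $3\leq n\leq 7$ corresponds exactly to the range in which area-minimizing hypersurfaces with free boundary are smooth, both in the interior and along their free boundary. The argument proceeds in four steps: (i) reduce to a metric $\hat g$ with harmonic-type asymptotics; (ii) assuming $m(\hat g)<0$, construct a stable free boundary minimal hypersurface $\Sigma^{n-1}\subset\hat M$ asymptotic to a coordinate half-hyperplane; (iii) use stability together with a conformal change to descend to dimension $n-1$, and iterate down to a base case; (iv) handle the rigidity statement.

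For (i), I would use the conformal transformation law for $R$ and $h$ together with a density argument: solve an exterior linear Dirichlet problem to perturb $\hat g$ into a conformally flat metric $\hat g=u^{4/(n-2)}\delta$ outside a compact set with
\begin{equation*}
u(y)=1+A|y|^{2-n}+O(|y|^{1-n}),
\end{equation*}
where $A$ is proportional to the mass, while arranging $R_{\hat g}>0$ in the interior and $h_{\hat g}\geq 0$ on $\partial\hat M$. The contradiction hypothesis $m(\hat g)<0$ becomes $A<0$. For (ii), following Schoen--Yau, I would produce $\Sigma$ by minimizing area in the class of hypersurfaces with boundary on $\partial\hat M$ that coincide, outside a large ball of radius $R$, with a fixed coordinate half-hyperplane, and then pass to the limit $R\to\infty$. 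The negativity of the mass furnishes a strict barrier: the harmonic asymptotics make any sufficiently far-out half-hyperplane have negative mean curvature with respect to a preferred normal, trapping the minimizing sequence in a bounded tubular neighbourhood of the reference half-plane. Combined with standard regularity for area-minimizers with free boundary in dimension $\leq 7$, the limit is a smooth, complete, stable, asymptotically planar, free boundary minimal hypersurface $\Sigma$ meeting $\partial\hat M$ orthogonally.

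For (iii), the stability inequality on $\Sigma$, combined with the Gauss equation $R_\Sigma=R_{\hat g}-2\mathrm{Ric}_{\hat g}(\nu,\nu)-|A|^2$ (here $H=0$) and its boundary counterpart (where the orthogonal contact cancels the $\pi_{\hat g}(\nu,\nu)$ term and yields $h_{\partial\Sigma,\Sigma}=h_{\partial\hat M,\hat g}|_{\partial\Sigma}$), shows that the conformal Laplacian on $\Sigma$ together with its boundary operator admits a positive principal eigenfunction $\varphi$ decaying like $|y|^{3-n}$. The rescaled metric $\varphi^{4/(n-3)}g_\Sigma$ is then an asymptotically flat metric on an $(n-1)$-dimensional manifold with non-compact boundary, with nonnegative scalar curvature and boundary mean curvature, and whose mass is a positive multiple of $A<0$. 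Iterating contradicts the lower-dimensional case; the base case $n=3$ reduces to a two-dimensional $\Sigma$ that is asymptotically flat with geodesic boundary and to which the Gauss--Bonnet theorem with boundary is applied directly to force $m\geq 0$. For rigidity, if $m(\hat g)=0$, any infinitesimal deformation of $\hat g$ that increases $R$ or $h$ pointwise would strictly increase the mass; the linearized constraint together with the mass formula then forces $R_{\hat g}\equiv 0$ and $h_{\hat g}\equiv 0$, and a further conformal deformation argument yields flatness of $\hat g$ with totally geodesic boundary, hence isometry to $\R^n_+$.

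The hard part is step (ii): unlike the classical closed-boundary setting, one must simultaneously enforce orthogonal contact with $\partial\hat M$, the half-planar asymptotics at infinity, and the interior and free-boundary regularity of $\Sigma$. Sharp barriers built from the negative mass are needed to prevent the minimizing sequence from either sliding off along $\partial\hat M$ or escaping upward into $\hat M$, and one must carefully verify that the asymptotic profile of $\Sigma$ is planar enough to make the mass computation in step (iii) meaningful. Adapting the classical barrier constructions and the compactness/regularity theory for free-boundary area-minimizers to the asymptotically flat setting with non-compact boundary is the core technical difficulty.
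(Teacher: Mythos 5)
This statement is not proved in the paper at all: Theorem \ref{pmt} is imported verbatim from \cite{almaraz-barbosa-lima}, so there is no internal argument to compare your proposal against. What you outline --- harmonic-type asymptotics, and then, assuming $m(\hat g)<0$, a Schoen--Yau descent built on stable area-minimizing hypersurfaces with free boundary on $\partial\hat M$ meeting it orthogonally, with $3\leq n\leq 7$ entering through interior and free-boundary regularity of minimizers, a conformal descent via the stability inequality, and a Gauss--Bonnet base case --- is essentially the strategy of that cited reference, so at the level of approach your reconstruction is the right one. Two caveats. First, what you present is a program rather than a proof: the items you yourself flag in step (ii) (barriers from $A<0$ that trap the minimizing sequence near a reference half-plane without it sliding along $\partial\hat M$, free-boundary regularity and compactness, and the asymptotic planarity of $\Sigma$ needed to make the inductive mass computation legitimate), as well as the precise boundary term in the second variation that converts orthogonal contact plus $h_{\hat g}\geq 0$ into the inductive hypothesis, are exactly the technical content of the cited paper and cannot be taken for granted. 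Second, your rigidity sketch is stated backwards: the standard argument is that if $m(\hat g)=0$ but $R_{\hat g}$ or $h_{\hat g}$ is positive somewhere, one perturbs (e.g.\ conformally) so as to \emph{decrease} the mass while preserving $R\geq 0$, $h\geq 0$, contradicting the inequality already established; this forces $R_{\hat g}\equiv 0$, $h_{\hat g}\equiv 0$, after which a further deformation argument gives $\mathrm{Ric}_{\hat g}\equiv 0$ and a totally geodesic boundary, and only then does one conclude the isometry with $\R^n_+$. As written, the claim that deformations increasing $R$ or $h$ ``would strictly increase the mass'' is neither the mechanism nor obviously true, so that step needs to be redone along the lines above.
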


	The asymptotically flat manifolds we work with in this paper come from the stereographic projection of compact manifolds with boundary. Inspired by Schoen's approach \cite{schoen1} to the classical Yamabe problem, this  projection is defined by means of a Green's function with singularity at a boundary point. Since in general we do not have the same control of the Green's function expression we do in the case of manifolds without boundary, the relation with \eqref{def:mass} is obtained by means of an integral defined in \cite{brendle-chen}. 
This is the content of the next proposition which is stated for $n=3$. In the case of locally conformally flat manifolds with umbilical boundary, this type of argument is unnecessary since the Green's function has a nicely controlled expansion at infinity as we will see below in the last lines of the proof of Theorem \ref{compactness:thm}.
	\begin{proposition}\label{propo:I:mass}
		Let $(M,g)$ be a compact three-manifold with boundary $\partial M$ and consider Fermi coordinates $z$ centred at $x_0\in \d M$.  Let $G$ be a smooth positive function on $M\backslash\{x_0\}$ written near $x_0$ as
		$$
		G(z)=|z|^{-1}+\phi(z)
		$$ 
		where $\phi$ is smooth on $M\backslash\{x_0\}$ satisfying $\phi(z)=O(|\log|z|\,|)$  and set 
		\begin{align*}
			I(x_0,\rho)
			&=8\int_{S^+_{\rho}}\left(|z|^{-1}\d_aG(z)-\d_a|z|^{-1}G(z)\right)\frac{z_a}{|z|}d\sigma
			\\
			&-12\int_{S^+_{\rho}}\sum_{j,l=1}^{2}|z|^{-5}z_3z_j z_l\pi_{jl}(x_0)d\sigma\,.
		\end{align*}
		Consider the metric $\hat g=G^{4}g$ and a suppose that $R_{\hat g}$ and $h_{\hat g}$ are integrable on $M\backslash\{x_0\}$ and $\partial M\backslash\{x_0\}$ respectively, with respect to $\hat g$. Then $(M\backslash \{x_0\},\hat g)$ is asymptotically flat in the sense of Definition \ref{def:asym} with mass
		$$
		m(\hat g)=\lim_{\rho\to 0}I(x_0,\rho).
		$$
		Moreover, 
		\begin{align*}
			P'(G,\rho)=-\frac{1}{16}I(x_0,\rho)+O(\rho\, |\log \rho|),
		\end{align*}
		where $P'$ is defined in Section \ref{sec:sign:restr}.
	\end{proposition}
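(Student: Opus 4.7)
The proof splits naturally into two independent stages: first, establishing asymptotic flatness and computing the mass via the conformal inversion $y = z/|z|^2$, and second, verifying the identity for $P'(G,\rho)$ by direct substitution of $G = |z|^{-1}+\phi$ into the definition of $P'$.

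For the first stage, I introduce inverted coordinates $y = z/|z|^2$, which map $B^+_\rho\setminus\{0\}$ diffeomorphically onto $\{y\in\mathbb R^3_+:|y|>1/\rho\}$. The Euclidean part of $g$ pulls back to $|y|^{-4}\delta_{\mathbb R^3}$ and is cancelled to leading order by the conformal factor $G^4 = |y|^4(1+|y|^{-1}\phi(z(y)))^4$. Combined with the Fermi expansion \eqref{exp:g}, whose $-2\pi_{jl}(x_0)z_n$ term contributes an $O(|y|^{-1})$ perturbation, and using $\phi = O(|\log|z||)$, one obtains $\hat g_{ab}(y) = \delta_{ab}+O(|y|^{-1}\log|y|)$ with matching derivative decay; thus $\hat g$ is asymptotically flat of order $q>1/2$. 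To derive the mass formula, evaluate \eqref{def:mass} on the sphere $|y|=1/\rho$ and transform back via $y = z/|z|^2$ to an integral on $S^+_\rho$. The conformally flat contribution produces the first integrand $|z|^{-1}\partial_a G - G\,\partial_a|z|^{-1}$ of $I(x_0,\rho)$, with the constant $8$ arising from the standard conformal-change identity for the connection in dimension three, while the Fermi correction $-2\pi_{jl}(x_0)z_n$ yields the second integrand $-12\sum_{j,l}|z|^{-5}z_3 z_j z_l \pi_{jl}(x_0)$. The $\partial M$ boundary term $\sum_j\hat g_{nj}y_j/|y|$ of \eqref{def:mass} contributes $O(\rho|\log\rho|)$ and vanishes in the limit, since $g_{jn}\equiv 0$ in Fermi coordinates.

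For the second stage, decompose $|dG|^2 = (\partial_r G)^2 + |\nabla_T G|^2$ where $\nabla_T$ is the tangential Euclidean gradient on $S^+_\rho$. Since $\nabla_T(|z|^{-1}) = 0$, the integrand of $P'(G,\rho)$ reduces to $\tfrac{1}{2}G\partial_r G + \tfrac{r}{2}(\partial_r G)^2 - \tfrac{r}{2}|\nabla_T\phi|^2$. Substituting $G = r^{-1}+\phi$ cancels the singular $r^{-3}$ pieces in the first two terms and leaves
\[
-\tfrac{1}{2}\bigl(r^{-2}\phi + r^{-1}\partial_r\phi\bigr) + \tfrac{1}{2}\phi\,\partial_r\phi + \tfrac{r}{2}(\partial_r\phi)^2 - \tfrac{r}{2}|\nabla_T\phi|^2.
\]
The leading pair equals $-\tfrac{1}{2}(r^{-1}\partial_r G + r^{-2}G)$ (upon re-adding the cancelled singular pieces), integrating over $S^+_\rho$ to exactly $-\tfrac{1}{16}$ times the first piece of $I(x_0,\rho)$. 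The remaining quadratic-in-$\phi$ terms are $O(\rho|\log\rho|)$ using $\phi = O(|\log r|)$ together with derivative bounds $|\partial\phi| = O(|\log r|/r)$, which follow from standard elliptic estimates applied to $\phi = G - |z|^{-1}$ on annular regions away from $x_0$. Finally, the $\pi_{jl}$-piece of $I(x_0,\rho)$ vanishes identically on each $S^+_\rho$: parametrising $z = \rho\omega$ with $\omega\in S_1^+$, it reduces to $\sum_{j,l}\pi_{jl}(x_0)\int_{S_1^+}\omega_3\omega_j\omega_l\,d\omega$, whose off-diagonal $(j\neq l)$ contributions vanish by the $\omega_1\mapsto -\omega_1$ reflection and whose diagonal contributions sum through $\sum_{j=1}^2\pi_{jj}(x_0) = 2h_g(x_0) = 0$, invoking the Agreement of Section \ref{sec:pre}. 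The main obstacle is the precise coefficient-matching in the inversion pull-back for the mass integral, particularly producing exactly the factor $12$ in the Fermi correction piece of $I$; once this and the derivative bounds on $\phi$ are in hand, the Pohozaev computation is routine.
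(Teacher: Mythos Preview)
Your outline is essentially the standard approach, and indeed the paper itself gives no argument beyond citing Propositions~3.5--3.6 of \cite{almaraz-queiroz-wang} and Propositions~5.2--5.3 of \cite{almaraz-wang}, where precisely this inversion-plus-direct-expansion method is carried out. The two-stage plan---pulling back $\hat g=G^4g$ under $y=z/|z|^2$ to verify asymptotic flatness and identify the mass integrand, then expanding $P'(G,\rho)$ via $G=r^{-1}+\phi$---is correct, and your use of the standing Agreement $h_g\equiv 0$ to kill the $\pi_{jl}$-piece of $I(x_0,\rho)$ (via $\sum_j\pi_{jj}(x_0)=2h_g(x_0)=0$) is exactly what is needed for the $P'$ identity; without that hypothesis the second integral in $I$ is a nonzero $\rho$-independent constant with no counterpart in $P'$, and the identity would fail.

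One quantitative slip: with the derivative bound $|\partial\phi|=O(|\log r|/r)$ that you invoke, the quadratic-in-$\phi$ remainders $\tfrac12\phi\,\partial_r\phi$, $\tfrac{r}{2}(\partial_r\phi)^2$ and $\tfrac{r}{2}|\nabla_T\phi|^2$ integrated over $S^+_\rho$ (area $\sim\rho^2$) give $O(\rho|\log\rho|^2)$, not $O(\rho|\log\rho|)$ as you assert. Matching the proposition's stated error would require $|\partial\phi|=O(r^{-1})$ without the logarithm, and that does not follow from the bare hypothesis $\phi=O(|\log|z||)$ via scaled interior estimates alone. This discrepancy is harmless for the downstream application---only $o(1)$ as $\rho\to 0$ is actually used in the proof of Theorem~\ref{compactness:thm}---so your argument is complete in substance even if the exponent on the logarithm is optimistic.
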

\bp
We refer to Propositions 3.5 and 3.6 in ~\cite{almaraz-queiroz-wang} or Propositions 5.2 and 5.3 in ~\cite{almaraz-wang} for the proof. 
\ep

\begin{proof}[Proof of Theorem \ref{compactness:thm}]
In view of standard elliptic estimates, we only need to prove that $u$ is bounded from above. As changing conformally the metric $g$ we are assuming $R_g>0$, it follows from the maximum principle that we only need to control $u$ on the boundary.
Assume by contradiction there exists a sequence $u_i$ of positive solutions of (\ref{main:equation:1}), with $0<\kappa<1$, such that 
\begin{equation*}
	\max_{\partial M} u_i\to\infty,\:\:\:\:\text{as}\:i\to\infty.
\end{equation*}
It follows from Corollary \ref{Corol:8.4} and Remark \ref{rmk:corol} that we can assume $u_i$ has $N$ isolated simple blow-up points on $\partial M$
\begin{align*}
	x_i^{(1)}\to x^{(1)},\:...\:,\: x_i^{(N)}\to x^{(N)}.
\end{align*}
Without loss of generality, suppose 
\begin{align*}
	u_i(x_i^{(1)})=\min\big\{u_i(x_i^{(1)}), ..., u_i(x_i^{(N)})\big\}\:\:\:\:\text{for all}\:i.
\end{align*}

Now for each $j=1,...,N$, consider the Green's function $G_{(j)}$ for the conformal Laplacian $L_{g}$ with boundary condition $B_{g}G_{(j)}=0$ and singularity at $x^{(j)}\in \partial M$. Observe that these Green's functions exist because $Q_g(M)>0$ by hypothesis.
In Fermi coordinates centred at the respective boundary singularities, those functions satisfy
\begin{align*}
	\big|G_{(j)}(z)-|z|^{2-n}\big|\leq 
	\begin{cases}
	C(1+|\log|z|\,|), &n=3,4,
	\\
	C|z|^{d+3-n}, &n\geq 5,
	\end{cases}
\end{align*}
according to \cite[Proposition B.2]{almaraz-sun}.

It follows from the upper bound (a) of Proposition \ref{estim:simples} that there exists some function $G$ such that $u_i(x_i^{(1)})u_i\to G$ in $C^2_{\text{loc}}(M\backslash  \{x^{(1)},...,x^{(N)}\})$. Moreover, the lower control (b) of that proposition and elliptic theory yields the existence of $a_j>0$, $j=1,...,N$, and $b\in C^2(M)$ such that 
\begin{equation*}
	G=\sum_{j=1}^{N}a_j G_{(j)}+b,
\end{equation*}
and 
\begin{align*}
	\begin{cases}
		L_{g}b=0,&\text{in}\:M,
		\\
		B_{g}b=0,&\text{on}\:\d M.
	\end{cases}
\end{align*}
The hypothesis $Q_g(M)>0$ ensures that $b\equiv 0$. 

Suppose first that $n=3$. 
If $\hat g=G_{(1)}^{4}g$, we have
$$
R_{\hat g}=-8G_{(1)}^{5}L_gG_{(1)}=0
\qquad\text{and}\qquad
h_{\hat g}=-2G_{(1)}^{3}B_gG_{(1)}=0.
$$
By Proposition \ref{propo:I:mass}, $(M\backslash\{x^{(1)}\}, \hat g)$ is an asymptotically flat manifold (in the sense of Definition \ref{def:asym}) and the mass is given by
$$
m(\hat g)=\lim_{\rho\to 0} I(x^{(1)}, \rho).
$$
Then Theorem \ref{pmt} and the assumption that $M$ is not conformally equivalent to the hemisphere gives $m(\hat g)>0$. So, by  the last statement of Proposition \ref{propo:I:mass},
$$
\lim_{\rho\to 0}P'(G_{(1)}, \rho)<0.
$$
This contradicts the local sign restriction of Theorem \ref{cond:sinal} and proves Theorem \ref{compactness:thm} in the three dimensional case.

Now suppose $M$ locally conformally flat with $\partial M$ umbilic.
We use the argument in \cite{han-li1} which we will sketch here. 
After a conformal change, we can assume that $g$ is Euclidean in local coordinates $\psi:B_\delta^+\to M$ centred at $x^{(1)}$.
In these coordinates, the Green's function $G_{(1)}$ has asymptotic expansion 
$$
G_{(1)}(z)=|z|^{2-n}+A+\alpha(z),
$$
where $\Delta\alpha=0$ in $B_\delta^+$ and $\partial_n\alpha=0$ on $D_\delta$. It follows from the positive mass theorem for locally conformally flat manifolds in \cite{schoen-yau2}, using a doubling argument and the fact that $M$ is not conformally diffeomorphic to the unit hemisphere, that $A>0$.
Then it is direct to check that 
$$
\lim_{\rho\to 0}P'(G_{(1)}, \rho)<0.
$$
As the result in Theorem \ref{cond:sinal} also holds for the Euclidean metric regardless of the dimension, this gives a contradiction and ends the proof of Theorem \ref{compactness:thm}.
\end{proof}

\bigskip\noindent
{\bf{Acknowledgement.}} The authors would like to thank the anonymous referee for his/her valuable comments and suggestions.

\bigskip\noindent
\textsc{S\'ergio Almaraz\\
Instituto de Matem\'atica e Estat\' istica, \\
Universidade Federal Fluminense\\
Rua Prof. Marcos Waldemar de Freitas S/N,
Niter\'oi, RJ,  24.210-201, Brazil}\\
e-mail: {\bf{sergioalmaraz@id.uff.br}}

\bigskip\noindent
\textsc{Shaodong Wang\\
School of Mathematics and Statistics,\\
Nanjing University of Science and Technology\\
Nanjing, 210094, People’s Republic of China} \\
e-mail: {\bf{shaodong.wang@mail.mcgill.ca}}


\begin{thebibliography}{}


\bibitem{almaraz1}
S\'{e}rgio de~Moura Almaraz.
\newblock An existence theorem of conformal scalar-flat metrics on manifolds
with boundary.
\newblock {\em Pacific J. Math.}, 248(1):1--22, 2010.


\bibitem{almaraz5}
S\'{e}rgio de~Moura~Almaraz.
\newblock Blow-up phenomena for scalar-flat metrics on manifolds with boundary.
\newblock {\em J. Differential Equations}, 251(7):1813--1840, 2011.

\bibitem{almaraz3}
S\'{e}rgio de~Moura~Almaraz.
\newblock A compactness theorem for scalar-flat metrics on manifolds with
boundary.
\newblock {\em Calc. Var. Partial Differential Equations}, 41(3-4):341--386,
2011.

\bibitem{almaraz-barbosa-lima}
S\'{e}rgio Almaraz, Ezequiel Barbosa, and Levi~Lopes de~Lima.
\newblock A positive mass theorem for asymptotically flat manifolds with a
non-compact boundary.
\newblock {\em Comm. Anal. Geom.}, 24(4):673--715, 2016.

\bibitem{almaraz-queiroz-wang}
S\'{e}rgio Almaraz, Olivaine~S. de~Queiroz, and Shaodong Wang.
\newblock A compactness theorem for scalar-flat metrics on 3-manifolds with
boundary.
\newblock {\em J. Funct. Anal.}, 277(7):2092--2116, 2019.

\bibitem{almaraz-sun}
S\'{e}rgio Almaraz and Liming Sun.
\newblock Convergence of the {Y}amabe flow on manifolds with minimal boundary.
\newblock {\em Ann. Sc. Norm. Super. Pisa Cl. Sci. (5)}, 20(3):1197--1272,
2020.

\bibitem{almaraz-wang}
S\'ergio Almaraz and Shaodong Wang.
\newblock A compactness theorem for conformal metrics with constant scalar
curvature and constant boundary mean curvature in dimension three.
\newblock {\em Calc. Var. Partial Differential Equations}, 64(1):Paper No. 35,
2025.

\bibitem{araujo}
Henrique Ara\'{u}jo.
\newblock Existence and compactness of minimizers of the {Y}amabe problem on
manifolds with boundary.
\newblock {\em Comm. Anal. Geom.}, 12(3):487--510, 2004.

\bibitem{berti-malchiodi}
Massimiliano Berti and Andrea Malchiodi.
\newblock Non-compactness and multiplicity results for the {Y}amabe problem on
{$S^n$}.
\newblock {\em J. Funct. Anal.}, 180(1):210--241, 2001.

\bibitem{brendle2}
Simon Brendle.
\newblock Blow-up phenomena for the {Y}amabe equation.
\newblock {\em J. Amer. Math. Soc.}, 21(4):951--979, 2008.

\bibitem{brendle-chen}
Simon Brendle and Szu-Yu~Sophie Chen.
\newblock An existence theorem for the {Y}amabe problem on manifolds with
boundary.
\newblock {\em J. Eur. Math. Soc. (JEMS)}, 16(5):991--1016, 2014.

\bibitem{brendle-marques}
Simon Brendle and Fernando~C. Marques.
\newblock Blow-up phenomena for the {Y}amabe equation. {II}.
\newblock {\em J. Differential Geom.}, 81(2):225--250, 2009.

\bibitem{brezis}
H.~Brezis.
\newblock Semilinear equations in {${\bf R}^N$} without condition at infinity.
\newblock {\em Appl. Math. Optim.}, 12(3):271--282, 1984.



\bibitem{chen}
Szu-Yu~Sophie Chen.
\newblock Conformal deformation to scalar flat metrics with constant mean curvature on the boundary in higher dimensions.
\newblock {arXiv:0912.1302}.


\bibitem{chen-ruan-sun}
Xuezhang Chen, Yuping Ruan, and Liming Sun.
\newblock The {H}an-{L}i conjecture in constant scalar curvature and constant
boundary mean curvature problem on compact manifolds.
\newblock {\em Adv. Math.}, 358:106854, 56, 2019.

\bibitem{chen-sun}
Xuezhang Chen and Liming Sun.
\newblock Existence of conformal metrics with constant scalar curvature and
constant boundary mean curvature on compact manifolds.
\newblock {\em Commun. Contemp. Math.}, 21(3):1850021, 51, 2019.


\bibitem{chen-wu}
Xuezhang Chen and Nan Wu.
\newblock Blow-up phenomena for the constant scalar curvature and constant
boundary mean curvature equation.
\newblock {\em J. Differential Equations}, 269(11):9432--9470, 2020.

\bibitem{chipot-fila-shafrir}
M.~Chipot, I.~Shafrir, and M.~Fila.
\newblock On the solutions to some elliptic equations with nonlinear {N}eumann
boundary conditions.
\newblock {\em Adv. Differential Equations}, 1(1):91--110, 1996.

\bibitem{cruz-malchiodi-ruiz}
Sergio Cruz-Bl\'{a}zquez, Andrea Malchiodi, and David Ruiz.
\newblock Conformal metrics with prescribed scalar and mean curvature.
\newblock {\em J. Reine Angew. Math.}, 789:211--251, 2022.

\bibitem{cruz-pistoia-vaira}
Sergio Cruz-Bl\'{a}zquez, Angela Pistoia, and Giusi Vaira.
\newblock Clustering phenomena in low dimensions for a boundary Yamabe problem.
\newblock arXiv:2211.08219.

\bibitem{disconzi-khuri}
Marcelo~M. Disconzi and Marcus~A. Khuri.
\newblock Compactness and non-compactness for the {Y}amabe problem on manifolds
with boundary.
\newblock {\em J. Reine Angew. Math.}, 724:145--201, 2017.

\bibitem{escobar1}
Jos\'{e}~F. Escobar.
\newblock Uniqueness theorems on conformal deformation of metrics, {S}obolev
inequalities, and an eigenvalue estimate.
\newblock {\em Comm. Pure Appl. Math.}, 43(7):857--883, 1990.

\bibitem{escobar2}
Jos\'{e}~F. Escobar.
\newblock Conformal deformation of a {R}iemannian metric to a scalar flat
metric with constant mean curvature on the boundary.
\newblock {\em Ann. of Math. (2)}, 136(1):1--50, 1992.

\bibitem{escobar3}
Jos\'{e}~F. Escobar.
\newblock The {Y}amabe problem on manifolds with boundary.
\newblock {\em J. Differential Geom.}, 35(1):21--84, 1992.

\bibitem{escobar4}
Jos\'{e}~F. Escobar.
\newblock Conformal deformation of a {R}iemannian metric to a constant scalar
curvature metric with constant mean curvature on the boundary.
\newblock {\em Indiana Univ. Math. J.}, 45(4):917--943, 1996.

\bibitem{ahmedou-felli1}
Veronica Felli and Mohameden Ould~Ahmedou.
\newblock Compactness results in conformal deformations of {R}iemannian metrics
on manifolds with boundaries.
\newblock {\em Math. Z.}, 244(1):175--210, 2003.

\bibitem{ahmedou-felli2}
Veronica Felli and Mohameden Ould~Ahmedou.
\newblock A geometric equation with critical nonlinearity on the boundary.
\newblock {\em Pacific J. Math.}, 218(1):75--99, 2005.

\bibitem{han-li1}
Zheng-Chao Han and Yanyan Li.
\newblock The {Y}amabe problem on manifolds with boundary: existence and
compactness results.
\newblock {\em Duke Math. J.}, 99(3):489--542, 1999.

		
\bibitem{han-li2}
Zheng-Chao Han and YanYan Li.
\newblock The existence of conformal metrics with constant scalar curvature and
constant boundary mean curvature.
\newblock {\em Comm. Anal. Geom.}, 8(4):809--869, 2000.

\bibitem{khuri-marques-schoen}
M.~A. Khuri, F.~C. Marques, and R.~M. Schoen.
\newblock A compactness theorem for the {Y}amabe problem.
\newblock {\em J. Differential Geom.}, 81(1):143--196, 2009.

\bibitem{kim-musso-wei}
Seunghyeok Kim, Monica Musso, and Juncheng Wei.
\newblock Compactness of scalar-flat conformal metrics on low-dimensional
manifolds with constant mean curvature on boundary.
\newblock {\em Ann. Inst. H. Poincar\'{e} C Anal. Non Lin\'{e}aire},
38(6):1763--1793, 2021.

\bibitem{li-zhu2}
Yanyan Li and Meijun Zhu.
\newblock Yamabe type equations on three-dimensional {R}iemannian manifolds.
\newblock {\em Commun. Contemp. Math.}, 1(1):1--50, 1999.

\bibitem{coda1}
Fernando~C. Marques.
\newblock Existence results for the {Y}amabe problem on manifolds with
boundary.
\newblock {\em Indiana Univ. Math. J.}, 54(6):1599--1620, 2005.

\bibitem{coda2}
Fernando~C. Marques.
\newblock Conformal deformations to scalar-flat metrics with constant mean
curvature on the boundary.
\newblock {\em Comm. Anal. Geom.}, 15(2):381--405, 2007.


\bibitem{mayer-ndiaye1}
Martin Mayer and Cheikh~Birahim Ndiaye.
\newblock Proof of the remaining cases of the boundary Yamabe problem.
\newblock arXiv:1505.06114. 

\bibitem{mayer-ndiaye2}
Martin Mayer and Cheikh~Birahim Ndiaye.
\newblock Barycenter technique and the {R}iemann mapping problem of
{C}herrier-{E}scobar.
\newblock {\em J. Differential Geom.}, 107(3):519--560, 2017.

\bibitem{schoen1}
Richard Schoen.
\newblock Conformal deformation of a {R}iemannian metric to constant scalar
curvature.
\newblock {\em J. Differential Geom.}, 20(2):479--495, 1984.

\bibitem{schoen-yau}
Richard Schoen and Shing~Tung Yau.
\newblock On the proof of the positive mass conjecture in general relativity.
\newblock {\em Comm. Math. Phys.}, 65(1):45--76, 1979.

\bibitem{schoen-yau2}
R.~Schoen and S.-T. Yau.
\newblock Conformally flat manifolds, {K}leinian groups and scalar curvature.
\newblock {\em Invent. Math.}, 92(1):47--71, 1988.

\bibitem{witten}
Edward Witten.
\newblock A new proof of the positive energy theorem.
\newblock {\em Comm. Math. Phys.}, 80(3):381--402, 1981.




\end{thebibliography}
\end{document}